\documentclass[11pt]{amsart}  
\usepackage{bm}
\usepackage{amscd}
\usepackage{amsmath}
\usepackage{amssymb}
\usepackage{epsf}
\usepackage{verbatim}
\usepackage{graphicx}
\usepackage{mathtools}
\usepackage{enumerate}
\usepackage{color}

\setlength{\textwidth}{6.5in}
\setlength{\textheight}{8.5in}
\setlength{\oddsidemargin}{0pt}
\setlength{\evensidemargin}{0pt}
\setlength{\topmargin}{0pt}
\setlength{\marginparsep}{0pt}
\setlength{\marginparwidth}{1in}

\usepackage{amsthm}
\usepackage{thmtools}
\declaretheorem[]{theorem}
\declaretheorem[sibling=theorem]{lemma}

\declaretheorem[style=definition,sibling=theorem]{definition}


\usepackage[usenames,dvipsnames,svgnames,table]{xcolor}
\definecolor{CombinatoricaAqua}{HTML}{00698C}
\definecolor{CombinatoricaBlue}{HTML}{3A3293}
\definecolor{CombinatoricaBrown}{HTML}{66220C}
\definecolor{CombinatoricaRed}{HTML}{DF2A27}
\definecolor{HarvardCrimson}{rgb}{0.6471, 0.1098, 0.1882}
\usepackage{hyperref}
\hypersetup{
	unicode,
	pdfencoding=auto,
	pdfauthor={},
	pdftitle={},
	pdfsubject={},
	pdfkeywords={},
	colorlinks=true,
	citecolor=CombinatoricaBlue,
	linkcolor=CombinatoricaAqua,
	anchorcolor=CombinatoricaBrown,
	urlcolor=HarvardCrimson
  }
\usepackage[capitalize]{cleveref}

\usepackage[nobysame,msc-links]{amsrefs}

\BibSpec{book}{%
  +{}  {\PrintPrimary}                {transition}
  +{,} { \textbf}                     {title} 
  +{.} { }                            {part}
  +{:} { \textit}                     {subtitle}
  +{,} { \PrintEdition}               {edition}
  +{}  { \PrintEditorsB}              {editor}
  +{,} { \PrintTranslatorsC}          {translator}
  +{,} { \PrintContributions}         {contribution}
  +{,} { }                            {series}
  +{,} { \voltext}                    {volume}
  +{,} { }                            {publisher}
  +{,} { }                            {organization}
  +{,} { }                            {address}
  +{,} { \PrintDateB}                 {date}
  +{,} { }                            {status}
  +{}  { \parenthesize}               {language}
  +{}  { \PrintTranslation}           {translation}
  +{;} { \PrintReprint}               {reprint}
  +{.} { }                            {note}
  +{.} {}                             {transition}
  +{}  {\SentenceSpace \PrintReviews} {review}
}

\makeatletter
\renewcommand{\PrintNames@a}[4]{%
  \PrintSeries{\name}
  {#1}
  {}{ and \set@othername}
  {,}{ \set@othername}
  {}{ and \set@othername}
  {#2}{#4}{#3}%
}
\makeatother

\newcommand{\ceil}[1]{\left\lceil#1\right\rceil}

\newcommand\given[1][]{\:#1\vert\:}
\DeclareMathOperator{\polylog}{polylog}
\DeclareMathOperator{\aut}{aut}
\DeclareMathOperator{\exc}{exc}

\DeclarePairedDelimiter{\defaultDelim}{[}{]}

\DeclareMathOperator{\capPr}{\sf Pr}
\renewcommand{\Pr}[2][]{\capPr_{#1}\defaultDelim*{#2}}
\DeclareMathOperator{\capE}{\sf E}
\newcommand{\E}[2][]{\capE_{#1}\defaultDelim*{#2}}
\DeclareMathOperator{\capVar}{\sf Var}
\newcommand{\Var}[2][]{\capVar_{#1}\defaultDelim*{#2}}



\newcommand\cB{\mathcal B}

\newcommand\cE{\mathcal E}

\newcommand{\brac}[1]{\left(#1\right)}
\newcommand{\bfrac}[2]{\left(\frac{#1}{#2}\right)}

\newcommand{\Bernoulli}{\mathsf{Bernoulli}}
\newcommand{\Bin}{\mathsf{Bin}}
\newcommand{\ER}{Erd\H{o}s-R\'enyi}
\newcommand{\End}{\textsc{End}}
\newcommand{\Lrg}{\textsc{Large}}
\newcommand{\Sml}{\textsc{Small}}

\begin{document}

\title{Thresholds in Random Motif Graphs}

\author{Michael Anastos}
\address{Michael Anastos\newline
  Carnegie Mellon University}
\email{
  \href{mailto:manastos@andrew.cmu.edu}
              {manastos@andrew.cmu.edu}}
\urladdr{\href{http://www.math.cmu.edu/~manastos}
              {http://www.math.cmu.edu/~manastos}}

\author{Peleg Michaeli}
\address{Peleg Michaeli\newline
	School of Mathematical Sciences,
	Raymond and Beverly Sackler Faculty of Exact Sciences,
	Tel Aviv University, Tel Aviv 6997801, Israel.}
\email{
  \href{mailto:peleg.michaeli@math.tau.ac.il}
              {peleg.michaeli@math.tau.ac.il}}
\urladdr{
  \href{http://www.math.tau.ac.il/~pelegm}{
        http://www.math.tau.ac.il/~pelegm}}

\author{Samantha Petti}
\address{Samantha Petti\newline
  School of Mathematics,
  Georgia Institute of Technology,
  Atlanta, GA, USA.}
\email{
  \href{mailto:spetti@gatech.edu}
              {spetti@gatech.edu}}
\urladdr{\href{https://math.gatech.edu/people/samantha-petti}
              {https://math.gatech.edu/people/samantha-petti}}

\thanks{S. Petti: This material is based upon work supported by the National Science Foundation Graduate Research Fellowship under Grant No. DGE-1650044}

\begin{abstract}
  We introduce a natural generalization of the \ER{} random graph model in which random instances of a fixed motif are added independently. 
  The \textit{binomial random motif graph} $G(H,n,p)$ is the random (multi)graph obtained by adding an instance of a fixed graph $H$ on each of the copies of $H$ in the complete graph on $n$ vertices, independently with probability $p$.
  We establish that every monotone property has a threshold in this model, and determine the thresholds for connectivity, Hamiltonicity, the existence of a perfect matching, and subgraph appearance.
  Moreover, in the first three cases we give the analogous hitting time results; with high probability, the first graph in the  random motif graph process that has minimum degree one (or two) is connected and contains a perfect matching (or Hamiltonian respectively). 
\end{abstract}

\maketitle

\section{Introduction}\label{sec:intro}
In the late 1950's Gilbert~\cite{Gil59} and Erd\H{o}s and R\'enyi~\cite{ER59} introduced two of the most fundamental models for generating random graphs: the \emph{binomial random graph} $G(n,p)$, generated by independently adding an edge between each pair of vertices in the complete graph on $n$ vertices with probability $p$, and the the \emph{uniform random graph} $G(n,m)$, which is a uniformly chosen graph from all graphs on $n$ vertices with $m$ edges.
Since, the extensive study of these simple constructions has influenced a variety of fields including combinatorics, computer science, and statistical physics (see~\cites{FK,Bol,JLR} for surveys). 

Detailed analysis of the model has led to the development of plethora of new techniques in probability for analyzing random processes, and the model has been used to verify the existence of structures with certain properties~\cite{AS}.
In computer science, the model has been used to analyze the performance of algorithms on an ``average'' case, showing that NP complete problems may be easier random instances.

The rise of data in the form of graphs (e.g.\ internet connections, biological networks, social networks) has further fueled the study of random graphs.
In practice, the comparison of real world networks to the \ER{} model is a popular technique for highlighting the non-random aspects of a network's 
structure~\cites{Yeg04, Alo07,Son05, Mil02}.
Moreover, the model has inspired many other models which are designed to mirror some characteristic of real-world networks (e.g.\ Watts-Strogatz graphs have small diameter~\cite{WT98}, Barab\'asi-Albert preferential attachment graph exhibit a power law degree distribution~\cite{BA99}).

In this paper we consider a natural generalization of the \ER{} model in which  random motifs are added rather than random edges.
A \emph{motif} is a fixed small subgraph, such as a triangle.
The motifs that are overrepresented in a network are correlated to the function of the network~\cites{Yeg04, Alo07,Son05, Mil02}.
Analyzing random graphs formed as the union of many instances of a particular motif $H$ will give insight into the structural properties of networks with many copies of the motif $H$.

We define the \textit{\textbf{binomial} random motif graph} $G(H,n,p)$ as the random (multi)graph obtained by adding an instance of $H$ on each of the $\binom{n}{|V(H)|}\cdot |V(H)|!/\aut(H)$ copies of $H$ in the complete graph on $n$ vertices $K_n$, independently with probability $p$.
Here by $\aut(H)$ we denote the number of automorphisms of $H$.
Note that if $H$ is an edge, then this is exactly $G(n,p)$.
Similarly, the \textit{\textbf{uniform} random motif graph} $\bar{G}(H,n,m)$ is the random (multi)graph obtained by taking the union of $m$ uniformly chosen copies of $H$ in $K_n$ without replacement.

Closely related to $\bar{G}(H,n,m)$ is the \emph{random motif graph process}  $\bar{G}_0(H,n), \bar{G}_1(H,n),\allowbreak ...,\allowbreak \bar{G}_N(H,n)$. 
$\bar{G}_0(H,n)$ is the empty graph on $n$ vertices and for $0\leq i \leq N= \binom{n}{|V(H)|}/\aut(H)$ the graph $\bar{G}_{i+1}(H,n)$ is generated by adding to $\bar{G}_{i}(H,n)$  a copy of $H$, $H_{i+1}$, chosen uniformly at random from all the copies of $H$ except those in $\{H_1,H_2,...,H_i\}$ i.e.\ those that have been added to $\bar{G}_{0}(H,n)$ so far.
Clearly $\bar{G}_m(H,n)$ has the same law as $\bar{G}(H,n,m)$. In addition, by setting $H$ to be an edge we retrieve the random graph process introduce by Erd\H{o}s and R\'enyi~\cite{ER60}.
By considering the random motif graph process in place of the uniform random motif graph model we can phrase results in a finer way (see for example \cref{thm:conn:hit}).

In this work we show that every monotone graph property has a threshold in the binomial random motif graph $G(H,n,p)$.
Then we determine the thresholds for connectivity, existence of a perfect matching, Hamiltoncity and subgraph appearance.
In the first three cases we also show a hitting time result, according to which w.h.p.\footnote{That is, with probability tending to $1$ as $n$ tends to infinity.}\ the first graph in the random motif graph process that has minimum degree one (or two) is connected (or Hamiltonian respectively). 

\subsection{Notation}\label{sec:notation}
Throughout we assume the motif $H$ has no isolated vertices.
For an integer $r\ge 0$, denote by $m_r(H)$ the number of its copies in $K_n$ which intersect the set $[r]$.
For an integer $d\ge 0$  we define the quantities $\delta_d(H)$ and $p_d(H)$ by
\[ \delta_d(H)
  :=\lceil d/\delta(H) \rceil -1
  \hspace{10mm}\text{ and }\hspace{10mm}
  p_d^{\pm}(H):= \frac{\ln n + \delta_d(H)\ln \ln n \pm x(n)}{m_1(H)},\]
where $x(n)$ is any function of $n$ satisfying $1\ll x(n) \ll \ln{\ln{n}}$.
Note that the expected number of added instances of $H$ in $G(H,n,p^\pm_1(H))$ is $m_n(H)\cdot p^\pm_1(H)$, which only depends on $n$ and on $|V(H)|$.

\subsection{Results}\label{sec:results}
A function $p^*=p^*(n)$ is a threshold for a monotone increasing property $\mathcal{P}$ in the random graph $G(H,n,p)$ if
\begin{equation*}
  \lim_{n \to \infty} \Pr{G(H,n,p) \in \mathcal{P}} =
  \begin{cases}
    0 &\text{ if } p/p^* \to 0,\\
    1 &\text{ if } p/p^* \to \infty,
  \end{cases}
\end{equation*}
as $n\to\infty$.
Our first result is a generalization of a theorem by Bollob\'as and Thomason~\cite{BT97}.
\begin{theorem}\label{thm:threshold}
  Every non-trivial monotone graph property has a threshold.
\end{theorem}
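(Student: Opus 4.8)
The plan is to adapt the classical argument of Bollob\'as and Thomason, which shows that every non-trivial monotone property has a threshold in $G(n,p)$, to the motif setting. The key structural fact we will exploit is that $G(H,n,p)$ is, like $G(n,p)$, a product measure: the presence of each of the $N_n := \binom{n}{|V(H)|}\cdot |V(H)|!/\aut(H)$ potential motif-copies is an independent event with probability $p$. So if $\mathcal{P}$ is monotone increasing and we write $f(p) := \Pr{G(H,n,p)\in\mathcal P}$, then $f$ is non-decreasing in $p$ (a standard coupling: sample independent uniforms on the copies and include a copy at level $p$ iff its uniform is at most $p$; raising $p$ only adds copies, and adding copies can only help a monotone increasing property). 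Non-triviality (the property is neither empty nor full) gives $f(p) > 0$ for some $p$ and $f(p) < 1$ for some $p$, hence by monotonicity $f$ takes values arbitrarily close to $0$ and to $1$.

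The heart of the proof is a \emph{sharp-threshold-free} boosting lemma: for any fixed constant $k$, a graph that is the union of $k$ independent copies of $G(H,n,p)$ has the same law as $G(H,n,1-(1-p)^k)$, which for small $p$ is very close to $G(H,n,kp)$; more to the point, taking a union of $k$ independent samples at level $p$ \emph{dominates} a single sample at level $p$, and if each of the $k$ samples independently has probability at least $\varepsilon$ of lying in $\mathcal P$, then because $\mathcal P$ is monotone increasing, their union lies in $\mathcal P$ unless all $k$ fail, so the union is in $\mathcal P$ with probability at least $1-(1-\varepsilon)^k$. Concretely, I would fix any $\varepsilon \in (0,1/2)$, define $p^* = p^*(n)$ to be (say) the infimum of $p$ with $f(p)\ge \varepsilon$, and then argue: (i) if $p/p^* \to 0$ then $G(H,n,p)\notin\mathcal P$ w.h.p., and (ii) if $p/p^* \to \infty$ then $G(H,n,p)\in\mathcal P$ w.h.p. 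For (ii): given $p \gg p^*$, choose an integer $k=k(n)\to\infty$ with $kp^* \ll p$; then $G(H,n,p)$ stochastically dominates the union of $k$ independent copies of $G(H,n,p^*)$ — this requires $1-(1-p^*)^k \le p$, i.e.\ roughly $kp^* \lesssim p$, which holds — and since each copy is in $\mathcal P$ with probability at least $\varepsilon$, the union, and hence $G(H,n,p)$, is in $\mathcal P$ with probability at least $1-(1-\varepsilon)^k \to 1$. For (i): this is the contrapositive of (ii) applied to the complementary monotone decreasing property, or directly, if $G(H,n,p)\in\mathcal P$ with probability bounded away from $0$ for some $p \ll p^*$, then boosting as above would force $f(p^*/2) \ge \varepsilon$ (say), contradicting the definition of $p^*$ for $n$ large; one has to be a little careful to get a clean contradiction, which is handled by choosing the boosting factor $k$ as a slowly growing function and using that $p \ll p^*$.

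The one subtlety — and the main obstacle — is the union/domination step: unlike $G(n,p)$, the graph $G(H,n,p)$ is a multigraph, and ``adding copies'' means adding parallel motif-instances. This is actually harmless: the correct statement is that the \emph{family of included copies} of $G(H,n,1-(1-p)^k)$ is distributed exactly as the union of the families of included copies of $k$ independent samples of $G(H,n,p)$, since for each copy the events are independent Bernoulli$(p)$ and ``included in at least one of $k$ samples'' is Bernoulli$(1-(1-p)^k)$. As long as the monotone property $\mathcal P$ is a property of the multigraph (or equivalently only depends on which copies are present and hence on the multiset of edges), monotonicity in the number of copies is exactly what makes the boosting argument go through, so no genuine difficulty arises — one just has to phrase ``monotone'' consistently as monotone in the set of present motif-copies (equivalently, in the edge multiset), which is the natural notion here. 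A secondary, purely bookkeeping point is to make sure the threshold $p^*$ one extracts is nonzero and at most $1$ (or can be truncated at $1$), which follows from non-triviality together with the fact that $f(1)=1$ and $f(0)=0$ when $H$ has no isolated vertices and $\mathcal P$ is non-trivial. I would organize the write-up as: (1) record that $G(H,n,p)$ is a product measure and $f$ is non-decreasing; (2) state and prove the boosting lemma via independent unions; (3) define $p^*$ and verify the two limits, deriving the contradiction in case (i) carefully.
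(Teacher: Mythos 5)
Your proposal is correct and follows essentially the same route as the paper: both rest on the Bollob\'as--Thomason boosting argument, using that the union of $k$ independent samples of $G(H,n,p)$ has the law of $G(H,n,1-(1-p)^k)\subseteq G(H,n,kp)$ together with monotonicity of $p\mapsto\Pr{G(H,n,p)\in\mathcal P}$. The only cosmetic difference is that the paper defines the threshold by solving $\Pr{G(H,n,p_{1/2})\in\mathcal P}=\tfrac12$ exactly (possible since this probability is a polynomial in $p$, increasing by monotonicity), whereas you take an infimum at level $\varepsilon$; both work.
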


Given Theorem \ref{thm:threshold}, a natural goal is to find the thresholds for various monotone properties. The remaining results of this paper are dedicated towards this goal; we determine the threshold for connectivity, the existence of a perfect matchings, Hamiltonicity, and subgraph appearance. 

A first such result, which generalizes a result in~\cite{ER59}, shows, in particular, that the expected number of motifs needed to make the random motif graph connected depends only on the number of (non-isolated) vertices of the motif.

\begin{theorem}\label{thm:conn:th}
  Let $H$ be a fixed graph.  Then
  \begin{equation*}
    \lim_{n\to\infty}\Pr{G(H,n,p)\text{ is connected }} =
    \begin{cases}
      0 & p\le p_1^-(H),\\
      1 & p\ge p_1^+(H).
    \end{cases}
  \end{equation*}
\end{theorem}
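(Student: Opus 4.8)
The plan is to handle the two cases separately, in each case first establishing the statement at the relevant critical density ($p=p_1^-(H)$ for the $0$-statement, $p=p_1^+(H)$ for the $1$-statement) and then extending it to all $p$ on the appropriate side by the natural monotone coupling of the family $\{G(H,n,p)\}_p$: attach an independent uniform $[0,1]$ variable to each copy of $H$ in $K_n$ and include that copy in $G(H,n,p)$ exactly when its variable is at most $p$, so that $G(H,n,p)\subseteq G(H,n,p')$ whenever $p\le p'$ and connectivity is a monotone increasing event. Throughout write $h=|V(H)|\ge2$ and $f=|E(H)|\ge1$, and record the following elementary facts: since $H$ has no isolated vertices $\delta(H)\ge1$, hence $\delta_1(H)=0$ and $p_1^{\pm}(H)=(\ln n\pm x(n))/m_1(H)$; the number $m_1(H)$ of copies of $H$ through a fixed vertex of $K_n$ satisfies $m_1(H)=\Theta(n^{h-1})$; and the number of copies of $H$ through two fixed vertices is $O(n^{h-2})$.

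\emph{The $0$-statement.} Fix $p=p_1^-(H)$ and let $X$ count the isolated vertices of $G(H,n,p)$. A vertex is isolated exactly when none of the $m_1(H)$ copies of $H$ through it is added, so $\E{X}=n(1-p)^{m_1(H)}$; since $pm_1(H)=\ln n-x(n)$ and $p^2m_1(H)=O((\ln n)^2/n^{h-1})\to0$, this gives $\E{X}=(1+o(1))e^{x(n)}\to\infty$. For distinct vertices $u,v$ the events ``$u$ is isolated'' and ``$v$ is isolated'' forbid the $m_1(H)$ copies through $u$ and the $m_1(H)$ copies through $v$ respectively, and these two families overlap in only $O(n^{h-2})$ copies, so $\Pr{u,v\text{ both isolated}}=(1-p)^{2m_1(H)-O(n^{h-2})}=(\E{X}/n)^2e^{O(pn^{h-2})}=(1+o(1))(\E{X}/n)^2$. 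Hence $\Var{X}=\E{X}+o((\E{X})^2)=o((\E{X})^2)$, and Chebyshev's inequality yields $X\ge1$ w.h.p., so $G(H,n,p_1^-(H))$ is disconnected w.h.p.; the coupling extends this to every $p\le p_1^-(H)$.

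\emph{The $1$-statement.} Fix $p=p_1^+(H)$. The expected number of isolated vertices is $n(1-p)^{m_1(H)}=(1+o(1))e^{-x(n)}\to0$, so w.h.p.\ $G(H,n,p)$ has minimum degree at least one; it thus suffices to show that w.h.p.\ no set $S$ with $2\le|S|\le n/2$ induces a connected component. Fix such a set with $|S|=k$. If $S$ is a component then no added copy of $H$ may have an edge between $S$ and $V\setminus S$, so, writing $\partial(S)$ for the number of copies of $H$ in $K_n$ possessing such a crossing edge, $\Pr{S\text{ is a component}}\le(1-p)^{\partial(S)}$. Every copy with exactly one vertex in $S$ is crossing (that vertex has a neighbour within its copy, which lies outside $S$), and counting them gives $\partial(S)\ge k\,m_1(H)\binom{n-k}{h-1}/\binom{n-1}{h-1}$, which is $\Omega(kn^{h-1})$ for all $k\le n/2$ and equal to $(1-o(1))k\,m_1(H)$ once $k=o(n)$. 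For $k$ at least a slowly growing function of $n$ this bound already overwhelms the union over the $\binom nk$ sets $S$ (in the extreme $k=\Theta(n)$ one has $\partial(S)=\Omega(n^h)$ and $(1-p)^{\partial(S)}=e^{-\Omega(n\ln n)}$, dwarfing $2^n$). For smaller $k$ one additionally uses that a component of size $k$ must receive at least $\lceil(k-1)/f\rceil$ added copies that place an edge inside $S$ but no crossing edge; such a copy is either contained in $S$ or, when $H$ is disconnected, has some connected pieces of $H$ inside $S$ and the rest outside, and the family of copies capable of this has size $O(k^2n^{h-2})$. As this family is disjoint from the family of crossing copies and all copy-indicators are independent, $\Pr{S\text{ is a component}}\le\binom{O(k^2n^{h-2})}{\lceil(k-1)/f\rceil}p^{\lceil(k-1)/f\rceil}(1-p)^{\partial(S)}$. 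Since $p=\Theta(\ln n/n^{h-1})$ and $h\ge2$, the power of $p$ more than compensates for the binomial coefficient, yielding a net gain of $n^{-\Omega(1)}$ per vertex of $S$, so summing over all sets and all $k$ the total tends to $0$. Hence $G(H,n,p_1^+(H))$ is connected w.h.p., and the coupling extends this to every $p\ge p_1^+(H)$.

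\emph{The main obstacle.} The delicate point is the union bound over small separating sets: the constants must be arranged so that the gain coming from the forced interior copies --- which is what carries the argument when $|S|$ is of constant or polylogarithmic order, where the bare crossing bound merely ties with $\binom nk$ --- dovetails with the raw crossing count, which takes over once $|S|$ is polynomially large, leaving no range uncovered. A secondary nuisance, and the reason the interior accounting is done at the level of the connected pieces of $H$ rather than of whole copies, is that a disconnected motif can meet both $S$ and its complement without creating any crossing edge; such copies must be excluded from $\partial(S)$ but still counted among those able to deposit edges inside $S$, and it is exactly this piece-wise bookkeeping that keeps the relevant family down to size $O(k^2n^{h-2})$ instead of the useless $O(kn^{h-1})$.
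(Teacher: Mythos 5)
Your argument is correct and follows essentially the same route as the paper: a second-moment count of isolated vertices for the $0$-statement, and for the $1$-statement a first-moment bound on isolated vertices combined with a union bound over candidate components of size $2\le k\le n/2$, charging each set both the absence of crossing copies and the presence of the forced interior copies. The only notable difference is minor: you force $\lceil(k-1)/|E(H)|\rceil$ interior copies and track the connected pieces of a possibly disconnected motif explicitly, whereas the paper forces $\lceil(r-1)/(|V(H)|-1)\rceil$ copies fully inside the set and treats every copy meeting but not contained in the set as crossing, which is accurate only for connected $H$.
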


In fact, we show a hitting time result, according to which the hitting time of connectivity equals, w.h.p., the hitting time of minimum degree one.
In other words, the random motif graph process becomes connected exactly when the last isolated vertex disappears, with high probability.

Fix an integer $n$ and a graph $H$.
Let $\tau_\mathsf{c}=\min\{i:\bar{G}_i(H,n)\text{ is connected}\}$, and for $d\ge 1$ denote $\tau_d=\min\{i:\delta(\bar{G}_i(H,n))\ge d\}$.

\begin{theorem}\label{thm:conn:hit}
  Let $H$ be a fixed graph. Then w.h.p.\ $\tau_\mathsf{c}=\tau_1$.
\end{theorem}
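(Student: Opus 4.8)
The plan is to prove the two inequalities $\tau_\mathsf{c} \ge \tau_1$ and $\tau_\mathsf{c} \le \tau_1$ separately; the first is trivial since a connected graph on $n \ge 2$ vertices has minimum degree at least one, so the content is entirely in showing $\tau_\mathsf{c} \le \tau_1$ w.h.p. The standard strategy, going back to Bollob\'as and Thomason for the \ER{} process, is to work in the binomial model at a carefully chosen density and transfer back to the process. Concretely, set $p^- = p_1^-(H)$ and $p^+ = p_1^+(H)$ with, say, $x(n) = (\ln\ln n)^{1/2}$. By \cref{thm:conn:th} (and standard coupling between the binomial and process models, controlling the number of distinct motifs added), it suffices to show: (i) w.h.p.\ the only obstruction to connectivity in $G(H,n,p^-)$ is the presence of vertices that are isolated (equivalently, small components are exactly the isolated vertices); and (ii) w.h.p.\ no ``new'' isolated vertex is created and no small component other than an isolated vertex appears in the sliver of the process between parameters $p^-$ and $p^+$. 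Then the hitting time of connectivity and that of minimum degree one are squeezed into the same narrow window and must coincide w.h.p.

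First I would carry out a first-moment computation to control small components. For $2 \le k \le n/2$, bound the expected number of components of order exactly $k$ in $G(H,n,p^-)$ by summing over $k$-subsets $S$ the probability that no copy of $H$ straddling $S$ and its complement is added; this probability is $(1-p^-)^{e_k}$ where $e_k$ is the number of copies of $H$ meeting both $S$ and $[n]\setminus S$, and $e_k = \Theta(k \cdot m_1(H))$ for $k = o(n)$ (each vertex of $S$ lies in $\Theta(m_1(H))$ copies, a constant fraction of which escape $S$ when $|S|$ is small, and a more careful accounting handles $k$ up to $n/2$). Combined with the $\binom{n}{k}$ choices and the fact that $p^- m_1(H) \sim \ln n$, this sum over $k \ge 2$ tends to $0$, showing that w.h.p.\ all components other than isolated vertices have order at least $n/2$, hence there is exactly one such ``giant'' and the only obstruction to connectivity is isolated vertices. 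The same computation at $p^+$ shows that w.h.p.\ $G(H,n,p^+)$ has no isolated vertices (the expected number is $n(1-p^+)^{m_1(H)} \to 0$), so it is connected.

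The second ingredient is a sandwiching argument between $p^-$ and $p^+$. I would show that w.h.p.\ $G(H,n,p^+)$ and $G(H,n,p^-)$ have the same set of isolated vertices, and that no component of order in $[2, n/2)$ appears at $p^-$ — both follow from the first-moment bounds above since the error terms are powers of $n$ that dominate the $e^{\pm x(n)}$ factors. Then one couples the binomial models with the process: by a standard argument (e.g.\ as in Bollob\'as's book), w.h.p.\ $G(H,n,p^-) \subseteq \bar G_{\tau_1}(H,n) \subseteq G(H,n,p^+)$ up to relabelling, because the number of distinct motifs present at density $p^\pm$ concentrates around $N p^\pm = m_n(H) p^\pm / \aut(H)$, and $\tau_1$ lies between these two values w.h.p. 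Since $\bar G_{\tau_1}(H,n)$ has minimum degree one by definition, it has no isolated vertices; since it contains $G(H,n,p^-)$, which has no small components other than isolated vertices, $\bar G_{\tau_1}(H,n)$ has no small components at all and hence is connected. Therefore $\tau_\mathsf{c} \le \tau_1$.

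The main obstacle I anticipate is the small-component first-moment estimate for the full range $2 \le k \le n/2$: for $k$ of linear order the count $e_k$ of ``crossing'' copies of $H$ is no longer simply $\Theta(k\, m_1(H))$ but must be bounded below by something like $c\, n \, m_1(H)$ uniformly, and one needs $(1-p^-)^{e_k} \le n^{-ck}$ or better to beat $\binom{n}{k}$; getting the constants right (and handling the multigraph/automorphism bookkeeping so that $m_1(H)$, rather than a smaller quantity, is the effective exponent) is the delicate part. A secondary technical point is making the coupling between $\bar G_i(H,n)$ and $G(H,n,p)$ fully rigorous given that motifs are sampled \emph{without} replacement in the process but \emph{independently} in the binomial model; the discrepancy is negligible because $N p^\pm = O(n \ln n)$ is far below $N = \Theta(n^{|V(H)|})$, so collisions are rare, but this needs to be stated carefully.
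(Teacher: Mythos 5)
Your overall architecture (sandwich $\tau_1$ and $\tau_\mathsf{c}$ between $p_1^-$ and $p_1^+$, show the only small components at $p_1^-$ are isolated vertices, couple the binomial model with the process) is the same as the paper's, and the first‑moment estimate for components of order $2\le k\le n/2$ is exactly the computation already done in the proof of \cref{thm:conn:th}. But there is a genuine gap at the crux of the hitting‑time argument. Your concluding inference --- ``$\bar G_{\tau_1}(H,n)$ contains $G(H,n,p^-)$, which has no small components other than isolated vertices, hence $\bar G_{\tau_1}(H,n)$ has no small components at all'' --- is not valid: adding motifs to $G(H,n,p^-)$ can \emph{create} a new non‑singleton small component, namely when a motif is placed entirely inside the set $V_0$ of vertices that were isolated at density $p^-$. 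In that case the last isolated vertex could disappear while the graph is still disconnected, i.e.\ $\tau_1<\tau_\mathsf{c}$. You acknowledge this in your item (ii) (``no small component other than an isolated vertex appears in the sliver''), but you claim it ``follows from the first‑moment bounds above,'' and it does not: those bounds control the binomial model at the two fixed densities $p^\pm$, not the graph at the random stopping time $\tau_1$ nor at every intermediate time of the process, and a union bound over the $\Theta(n\ln\ln n)$ steps of the sliver is too lossy for the $k=|V(H)|$ term.

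What is missing is precisely the content of the paper's \cref{lem:iso}: a second‑moment argument showing that the number of isolated vertices at $p_-=(\ln n-\ln\ln n)/m_1(H)$ is w.h.p.\ at most $2\ln n$. With $|V_0|=O(\ln n)$ in hand, one shows that among the $M=m_+-m_-$ motifs added during the sliver, the expected number landing entirely inside $V_0$ is $O\bigl(M\binom{|V_0|}{k}/m_n(H)\bigr)=o(1)$, so w.h.p.\ every motif that touches an isolated vertex also touches the giant, and the last isolated vertex's disappearance coincides with connectivity. Without an a priori bound on $|V_0|$ this step cannot be carried out, so you should add the second‑moment estimate (or an equivalent bound on the number of low‑degree vertices) and the ``no motif falls inside $V_0$'' union bound to complete the proof.
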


We remark that if the motif $H$ is connected, every connectivity related question depends solely on the sets of vertices on which copies of $H$ are added, and not on the way they are put there.  Thus, we may model the question as a (binomial or uniform) random $k$-uniform hypergraph, where $k=|V(H)|$. In this case, \cref{thm:conn:th,thm:conn:hit} follow immediately from known results about (loose) connectivity in random hypergraphs (see, e.g.,\cite{Poo15}).

In the following two theorems we show that
 the existence of a perfect matching is also dependent on the number of non-isolated vertices of the motif.
\begin{theorem}\label{thm:match:th}
  Let $H$ be a fixed graph, and assume that $n$ is even. Then,
  \begin{equation*}
    \lim_{n\to\infty}\Pr{G(H,n,p)\text{ has a perfect matching }} =
    \begin{cases}
      0 & p\le p_1^-(H),\\
      1 & p\ge p_1^+(H).
    \end{cases}
  \end{equation*}
\end{theorem}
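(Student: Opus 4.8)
The plan is to follow the classical two-sided strategy familiar from $G(n,p)$. The lower bound is free: if $p \le p_1^-(H)$ then by \cref{thm:conn:th} (or rather its proof) the graph $G(H,n,p)$ w.h.p.\ contains an isolated vertex, and a graph with an isolated vertex has no perfect matching. So the entire content is the $1$-statement: for $p \ge p_1^+(H)$, w.h.p.\ $G(H,n,p)$ has a perfect matching. By monotonicity (\cref{thm:threshold} is not needed, just the obvious coupling) it suffices to treat $p = p_1^+(H)$, i.e.\ the regime where the expected number of added copies of $H$ is $(\ln n + x(n))\,m_n(H)/m_1(H)$ with $1 \ll x(n) \ll \ln\ln n$, so that w.h.p.\ $\delta(G(H,n,p)) \ge 1$ and in fact the number of vertices of degree $1$ is $n^{o(1)}$ (this will need to be extracted from the connectivity analysis).

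First I would invoke a Tutte/Hall-type obstruction argument: if $G$ has no perfect matching, then by the Tutte--Berge formula there is a set $S$ and an independent-in-the-quotient family of ``bad'' components, and as in the $G(n,p)$ analysis one reduces to showing that w.h.p.\ there is no set $S$ with $|S| = |N_G(S)|$ and certain expansion failing, for $S$ in one of a few size ranges. Concretely I would split into: (i) $|S| = 1$, handled by $\delta(G) \ge 1$ w.h.p.; (ii) small $S$, say $2 \le |S| \le n/\ln n$ or so; and (iii) linear-sized $S$. For range (iii) one uses that $G(H,n,p)$ at this density is, in an appropriate sense, a union of $\Theta(n \ln n)$ motif copies, which already contains a near-spanning expander-like structure — here I would lean on the fact (provable by first-moment/union bound over $S$, $T$ with $|S|=|T|$) that there is no pair of disjoint sets $S,T$ with $|T| \le |S|$, $|S| \asymp n$, and no motif copy meeting both $S$ and its complement outside $T$; the number of motif copies inside any such configuration is $o(m_n(H)/m_1(H) \cdot \ln n)$, so the probability that none of the ``crossing'' copies appears is super-exponentially small, beating the $\binom{n}{|S|}^2$ union bound. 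Range (ii) is the delicate one and is where I expect the main obstacle to lie: here the union-bound count $\binom{n}{s}\binom{n}{s}$ must be beaten by the probability that a small set $S$ together with a small neighborhood $N(S)$ receives no further motif edges to the outside, and because a single copy of $H$ touching $S$ contributes up to $|V(H)|$ vertices at once, the ``expansion'' is lumpy — one must be careful to count, for a fixed small pair $(S,N(S))$, the number of motif copies in $K_n$ that intersect $S$ but are \emph{not} contained in $S \cup N(S)$, show this is $\ge (1-o(1)) |S|\, m_1(H)$ up to the relevant constant, and conclude that the probability such a pair is ``closed'' is at most $\exp(-(1-o(1)) |S| \ln n)$, which a union bound over $s$ from $2$ to $n/\polylog n$ can absorb.

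The key steps, in order, are therefore: (1) reduce to $p = p_1^+(H)$ and record from the proof of \cref{thm:conn:th} that w.h.p.\ $\delta(G(H,n,p)) \ge 1$ and the set $V_1$ of degree-$1$ vertices has size $n^{o(1)}$, with these vertices pairwise at distance $\ge 3$; (2) state the Tutte--Berge-based deterministic lemma reducing ``no perfect matching'' to the existence of a bad pair $(S,T)$ with $|T| \le |S|$ in one of the three ranges above (including the degenerate $|S|=1$ case); (3) dispatch range (i) using step~(1); (4) first-moment kill range (iii) using that a linear-sized configuration would have to avoid $\Omega(n)$ internally-available motif slots, with probability $\exp(-\Omega(n\ln n))$; (5) first-moment kill range (ii), the crux, with the ``lumpy expansion'' count described above. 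The main obstacle is step~(5): getting the constant in the exponent right so that $\binom{n}{s}^2 \cdot \exp(-(1-o(1))s\ln n) \to 0$ uniformly over $2 \le s \le n/\polylog n$. The resolution is that a motif copy meeting $S$ but not lying inside $S\cup N(S)$ exists in abundance because $|S\cup N(S)| \le 2s = o(n)$, so essentially all of the $\Theta(s \cdot m_1(H))$ copies through $S$ reach outside; combined with $p\, m_1(H) = \ln n + x(n)$ this yields exponent $(1+o(1))\,s\ln n$, which dominates the $2s\ln(en/s) = O(s\ln\ln n) + 2s\ln(en/s)$ coming from the union bound once $s \le n^{1-\Omega(1)}$, and a slightly more careful version (matching the analogous step for $G(n,p)$) handles $s$ up to $\e n$. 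I would also remark, as the authors do for connectivity, that when $H$ is connected the whole statement reduces to perfect matchings in random $k$-uniform hypergraphs and may be citable directly; the argument above is the one needed in general.
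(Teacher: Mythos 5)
Your $0$-statement coincides with the paper's (an isolated vertex via \cref{thm:deg:th} at $p\le p_1^-(H)$), but your route to the $1$-statement is different from the paper's and, as written, breaks at exactly the step you single out as the crux. In your range (ii) the union bound does not close: for a pair $(S,T)$ with $|S|=s$, $|T|\le s$ and no appearing motif copy meeting $S$ and reaching outside $S\cup T$, the failure probability is indeed $\exp(-(1-o(1))s\ln n)$, but the number of pairs is up to $\binom{n}{s}^2\le\exp(2s\ln(en/s))$, and $2\ln(en/s)>(1-o(1))\ln n$ for all $s\ll\sqrt{n}$; e.g.\ for $s=2$ the product is of order $n^{2}$. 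So the domination you claim ``once $s\le n^{1-\Omega(1)}$'' in fact holds only for $s\gtrsim\sqrt{n}$, and the range $2\le s\ll\sqrt{n}$ is not absorbed. Nor is it rescued by charging for the motif copies that must appear inside $S\cup T$ to give $S$ positive degree: that contributes at best a factor of order $\bigl((s/n)^{k-1}\ln n\bigr)^{s/k}$ with $k=|V(H)|$, and the combined exponent is still $+\frac{s}{k}\ln n+o(s\ln n)$, i.e.\ the bound diverges. Closing this range genuinely requires structural facts — low-degree vertices are few and pairwise far apart, no small vertex set meets too many motif copies, sets of high-degree vertices expand — which is precisely the $\Sml$/$\Lrg$ machinery of \cref{lem:expansion,smallapart,smallapart2}. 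Separately, your reduction from the Tutte--Berge obstruction to a neighbourhood-expansion condition on pairs $(S,T)$ is asserted rather than carried out; since the graph is not bipartite this is not a Hall condition, and the odd components of $G-U$ need not be singletons, so this deterministic step also needs an argument.

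For comparison, the paper never runs a Tutte-type first-moment argument. It proves the hitting-time statement \cref{thm:match:hit} and deduces \cref{thm:match:th} by the coupling $G(H,n,p_1^-)\subset\bar{G}(H,n,\tau_1)\subset G(H,n,p_1^+)$; the matching itself is produced by deleting the pendant edges $Q_1$ and their endpoints, running the entire Posa-rotation/expansion argument of the Hamiltonicity proof on the remaining graph to obtain a Hamilton cycle, and then taking alternate edges of that cycle together with $Q_1$. In other words, the structural lemmas you would need to repair range (ii) are exactly the ones the paper proves anyway, and it then bypasses the Tutte analysis entirely. Your direct approach could in principle be completed once those lemmas are in place, but it would not be simpler than the paper's reduction, and as it stands the key probabilistic estimate is incorrect.
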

Let $\tau_\mathsf{M}=\min\{i:\bar{G}_i(H,n)\text{ has a perfect matching}\}$.
The analogue hitting time result is also true.
\begin{theorem}\label{thm:match:hit}
  Let $H$ be a fixed graph, and assume that $n$ is even. Then w.h.p.\ $\tau_\mathsf{M}=\tau_1$.
\end{theorem}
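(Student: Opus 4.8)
The inequality $\tau_\mathsf{M}\ge\tau_1$ is immediate (an isolated vertex obstructs a perfect matching), so the plan is to prove the reverse bound: w.h.p.\ $\bar G_{\tau_1}(H,n)$ has a perfect matching. I would follow the standard two-phase ``sparsify, then augment'' route for hitting-time theorems. First, using the coupling between the process $(\bar G_m(H,n))_m$ and the binomial model $G(H,n,p)$, together with a Poisson/second-moment analysis of the number of uncovered vertices — a vertex is uncovered in $G(H,n,p)$ with probability $(1-p)^{m_1(H)}$, and for distinct vertices these events are asymptotically independent — I would show that w.h.p.\ $\tau_1$ lies in the window $[m^-,m^+]$, where $m^\pm$ are the integers for which $\bar G_{m^\pm}(H,n)$ corresponds to $G(H,n,p_1^\pm(H))$. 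Then I would reveal $\bar G_{m^-}(H,n)$, and afterwards the motifs $H_{m^-+1},H_{m^-+2},\dots$ one at a time, up to step $\tau_1$.

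The core will be a structural lemma about $G_0:=\bar G_{m^-}(H,n)$. Fix a large constant $C=C(H)$ and call a vertex \emph{light} if it lies in at most $C$ of the added copies; let $B$ be the set of light vertices and set $W:=B\cup N_{G_0}(B)$, enlarged by one further vertex if needed so that $|W|$ is even (this is where $n$ even enters). Via first- and second-moment estimates I would establish that w.h.p.: (i) $|W|\le\polylog(n)$, and the light vertices are \emph{scattered} (pairwise distance at least $5$ in $G_0$, and no copy of $H$ in $K_n$ meets $B$ in two vertices); (ii) $G_0-W$ has large minimum degree and strong vertex expansion (of the type that yields \cref{thm:match:th}), hence — being of even order — has a perfect matching $M_0$, with any two $M_0$-exposed vertices of $G_0-W$ joined by an $M_0$-augmenting path; (iii) no copy of $H$ added at steps $m^-+1,\dots,m^+$ meets $B$ in two vertices, nor meets $B$ in one vertex and $W\setminus B$ in another.

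With the lemma in hand I would extend $M_0$ as the process continues to $\tau_1$, maintaining a matching $M$ whose exposed set is a subset $W'\subseteq W$ of even size (initially $M=M_0$, $W'=W$). While $W'\ne\emptyset$, pick $w\in W'$: if $w\notin B$ it is already $G_0$-covered, and if $w\in B$ then by step $\tau_1$ some new copy covers it and, by (iii), every neighbour of $w$ in $\bar G_{\tau_1}$ lies in $V\setminus W\subseteq V\setminus W'$, hence is $M$-covered. Growing the $M$-alternating breadth-first tree from $w$ inside $\bar G_{\tau_1}$, the expansion of (ii) keeps it from closing up as long as the reachable set has size $O(|W|)=O(\polylog(n))$, so it must reach a second $M$-exposed vertex $w'\in W'$; augmenting along the $w$--$w'$ path yields a matching with two fewer exposed vertices, still inside $W$. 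Iterating empties $W'$ and leaves a perfect matching of $\bar G_{\tau_1}(H,n)$, giving $\tau_\mathsf{M}=\tau_1$ w.h.p.

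The hard part will be the structural lemma — in particular proving, \emph{at the exact density} $p_1^-(H)$, the smallness and scatteredness of the light set together with the uniform small-set expansion of the graph away from it, and also property (iii). All of these are first-moment computations, but the overlap structure of motif copies (short paths routed through partially coinciding copies of $H$, and copies sharing several vertices) makes the relevant enumerations noticeably more involved than their $G(n,p)$ analogues; for disconnected $H$ one must additionally check that being covered forces a genuinely useful local neighbourhood. Given the lemma, the augmentation step is the routine alternating-path argument familiar from the $G(n,m)$ perfect-matching hitting time.
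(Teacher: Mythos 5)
The decisive gap is in your augmentation step. Small-set expansion only tells you that the set of vertices reachable from an exposed vertex $w$ by $M$-alternating paths has size at least $\Omega(n)$ (if it were smaller than $n/30r$, its neighbourhood would be too small for a maximum matching); it does \emph{not} tell you that the alternating tree ever reaches a second exposed vertex. All exposed vertices lie in the set $W'$ of size $\polylog(n)$, and an alternating tree in a graph whose maximum matching misses $w$ can grow to linear size and then simply stop, never producing an augmenting path -- so the sentence ``it must reach a second $M$-exposed vertex $w'$'' does not follow from (i)--(iii), and this is exactly the heart of the theorem. To turn ``the endpoint/reachable sets are linear'' into an actual augmenting path one needs fresh randomness: a sprinkling (booster) argument in which each reserved random copy of $H$ hits the $\Omega(n^2)$ set of augmenting pairs with probability bounded below (this is the role of \cref{summary}(iv)), together with a mechanism that keeps those copies uniformly random \emph{after} conditioning on the hitting time. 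Your plan of revealing $H_{m^-+1},H_{m^-+2},\dots$ ``up to step $\tau_1$'' destroys that uniformity (the copy added at step $\tau_1$ is forced to cover the last uncovered vertex, and earlier copies are biased as well); the paper's two-stage generation -- adding immediately the copies that meet $\Sml$, deferring those that avoid $\Sml$, and observing that the deferred copies cannot affect the minimum-degree hitting time -- is precisely the device that rescues the conditioning, and your proposal has no analogue of it. Relatedly, step (ii), asserting that minimum degree plus expansion of $G_0-W$ already yields a perfect matching ``of the type that yields \cref{thm:match:th}'', is near-circular: the paper proves \cref{thm:match:th} \emph{from} \cref{thm:match:hit}, and any direct proof of (ii) at this density again needs a booster-type or Tutte--Berge argument, not expansion alone.

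For contrast, the paper avoids matching augmentation entirely: it sets aside the pendant edges $Q_1$ at the degree-one vertices, deletes those vertices and their neighbours ($U_1$), and runs the full Posa-rotation-plus-sprinkling machinery of the Hamiltonicity proof (with $\Sml\setminus U_1$ and $\Lrg\setminus U_1$ in place of $\Sml$, $\Lrg$, stopping the generation at $\delta(G_i')=1$) to obtain a Hamilton cycle of the remaining graph at time $\tau_1$; alternate edges of that cycle together with $Q_1$ give the perfect matching. Your overall frame (lower bound trivial, locate $\tau_1$ in the window $[m^-,m^+]$, isolate a small scattered set of light vertices) matches the paper's first phase, but the closing argument as written would fail; it could be repaired by replacing the deterministic alternating-BFS claim with a booster/sprinkling step for matchings, carried out under the paper's deferred-copy generation scheme.
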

\Cref{thm:ham} establishes that the thresholds for minimum degree $2$ and for Hamiltonicity are the same.
\Cref{thm:ham:hit} shows the hitting time version of that result. 
\begin{theorem}\label{thm:ham}
  Let $H$ be a fixed graph.  Then
  \begin{equation*}
    \lim_{n\to\infty} \Pr{G(H,n,p) \text{ is Hamiltonian }} =
    \begin{cases}
      0 & p\leq p_2^-(H),\\
      1 & p\geq p_2^+(H).
    \end{cases} 
  \end{equation*}
\end{theorem}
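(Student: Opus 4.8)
The plan is to prove the two halves of the statement separately; write $k=|V(H)|$ and $m_1=m_1(H)=\Theta(n^{k-1})$, recall that $\delta_2(H)=\lceil 2/\delta(H)\rceil-1$ equals $0$ when $\delta(H)\ge 2$ and $1$ when $\delta(H)=1$, and note that $p\,m_1=(1+o(1))\ln n$ throughout the range of interest. For the $0$-statement, by monotonicity it suffices to show that $G(H,n,p_2^-(H))$ w.h.p.\ has a vertex of degree at most $1$, hence is not Hamiltonian, and I would bound the number $Z$ of such vertices by the moment method. If $\delta(H)\ge 2$, a vertex has degree at most $1$ iff it lies in no added copy, so $\E{Z}=(1+o(1))\,n\,(1-p_2^-(H))^{m_1}=(1+o(1))e^{x(n)}\to\infty$; if $\delta(H)=1$, the dominant contribution comes from vertices lying in exactly one copy and sitting at a degree-$1$ vertex of $H$, which adds a factor of order $\ln n$, and since now $\delta_2(H)=1$ one again gets $\E{Z}\to\infty$. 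As the copy-covering events of two vertices lying in no common copy are nearly independent, a routine second moment (or Chen--Stein) estimate then yields $Z>0$ w.h.p.

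For the $1$-statement it suffices, again by monotonicity, to treat $p=p_2^+(H)$, and I would expose $G=G(H,n,p)$ as $G_1\cup G_2$ with $G_1=G(H,n,p_1)$ and $G_2=G(H,n,p_2)$ independent, $(1-p)=(1-p_1)(1-p_2)$, where $p_2$ is chosen so that $G_2$ contains $\Theta(n)$ copies in expectation. Then $p_2m_1=\Theta(1)$, so $p_1$ is still of the form $p_2^+(H)$; in particular $G_1$ is w.h.p.\ connected by \cref{thm:conn:th} and, by a first moment estimate as above, has minimum degree at least $2$. Fix a small $\varepsilon>0$ and let $W$ be the set of vertices of $G_1$ of degree below $\varepsilon\ln n$. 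Since the number of copies through a vertex is Poisson with mean $(1+o(1))\ln n$, a Poisson tail bound gives $\Pr{v\in W}=n^{-1+o_\varepsilon(1)}$, so $\E{|W|}=n^{o_\varepsilon(1)}$ and $|W|\le n^{0.1}$ w.h.p.\ for $\varepsilon$ small. Bounding the expected number of pairs of low-degree vertices joined by a short chain of copies then shows that w.h.p.\ any two vertices of $W$ are at $G_1$-distance at least $5$ (so $W$ is independent and every vertex has at most one neighbour in $W$) and that every neighbour of a vertex of $W$ has degree at least $\ln n/\ln\ln n$.

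The heart of the construction is to show that w.h.p.\ $G_1':=G_1-W$ is an $(R,2)$-expander with $R=c_1n$ for a suitably small constant $c_1>0$, i.e.\ $|N_{G_1'}(S)|\ge 2|S|$ for every $S\subseteq V(G_1')$ with $|S|\le R$. For $|S|\le U_0/3$, where $U_0=\Theta(n/\ln^2 n)$, I would combine (i) a union bound — harmless despite the clustering, since a fixed $t$-set is met by $O(t\ln n)$ copies in expectation — showing that w.h.p.\ every set $U$ with $|U|\le U_0$ spans fewer than $2|U|$ edges of $G_1$, with (ii) $\delta(G_1')\ge\varepsilon\ln n-1$, which follows from the definition of $W$ and the one-neighbour-in-$W$ property: if $|N_{G_1'}(S)|<2|S|$ then $S\cup N_{G_1'}(S)$ has at most $U_0$ vertices but contains at least $|S|(\varepsilon\ln n-3)$ edges of $G_1$, a contradiction. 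For $U_0/3\le|S|\le R$ a direct first moment bound suffices: if $|N_{G_1}(S)\cup S|\le 3|S|+|W|$ then no added copy has a vertex of $S$ at a vertex of $H$ adjacent to a vertex of $H$ mapped outside $N_{G_1}(S)\cup S$; the number of copies witnessing such an escape is $\Omega(|S|\,m_1)$, so this has probability at most $e^{-\Omega(|S|\,p_1 m_1)}=e^{-\Omega(|S|\ln n)}$, which beats the $\binom{n}{|S|}\binom{n}{3|S|}=e^{O(|S|\ln\ln n)}$ choices because $|S|\ge U_0/3$ forces $\ln(n/|S|)=O(\ln\ln n)$; discarding the at most $|W|\le|S|$ vertices of $W$ still leaves factor-$2$ expansion.

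With $G_1$ connected, of minimum degree at least $2$, and $G_1-W$ an $(R,2)$-expander with $W$ small and scattered, I would invoke a P\'osa-type rotation--extension argument — in the variant that tolerates a small exceptional set of low-degree vertices, as in the hitting-time proofs of Hamiltonicity for $G(n,m)$ due to Bollob\'as and to Ajtai--Koml\'os--Szemer\'edi — to conclude that w.h.p.\ $G_1$ is either Hamiltonian or has $\Omega(n^2)$ \emph{boosters}: non-edges whose addition lengthens a longest path or completes a Hamilton cycle. Then reveal the $\Theta(n)$ copies of $G_2$ one by one; at each step the current graph still contains $G_1$, so it is connected, has minimum degree at least $2$, and its non-$W$ part is an $(R,2)$-expander, whence if it is not yet Hamiltonian it has $\Omega(n^2)$ boosters, and — as every edge of $H$ is deposited on a uniformly random pair of vertices — the new copy contains a booster among its edges with probability $\Omega(n^2)/\binom n2=\Omega(1)$. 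Coupling the number of useful steps with a binomial variable, and using that at most $n$ boosters bring one to a Hamilton cycle, gives that $G_1\cup G_2$, and hence $G(H,n,p)$, is w.h.p.\ Hamiltonian; monotonicity then handles all $p\ge p_2^+(H)$. The step I expect to be the real obstacle is this rotation--extension argument in the presence of $W$: at the threshold $G_1$ genuinely has a positive (though bounded) number of vertices of degree exactly $2$, so it is not itself an expander, and P\'osa's rotation closure must be carried out while controlling its interaction with $W$ — already the technical core of the classical hitting-time proofs, and here it must in addition be made compatible with the clustered way the motif $H$ contributes its edges.
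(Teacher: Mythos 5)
Your overall architecture is the one the paper uses: the $0$-statement via a degree obstruction (the paper's \cref{thm:deg:th}), and the $1$-statement via a two-round exposure, a dichotomy between low- and high-degree vertices (your $W$ is the paper's $\Sml$), expansion of the high-degree part, and P\'osa rotations in which each sprinkled copy of $H$ hits the $\Omega(n^2)$ booster pairs with probability $\Omega(1)$ (\cref{summary}). Two of your choices differ only cosmetically and are fine after minor care with constants: you sprinkle $\Theta(n)$ copies and need more than $n$ booster hits, so the constant in $\Theta(n)$ must be taken large relative to the hitting probability (the paper sprinkles $\Theta(n\ln\ln n)$ copies and avoids this); and rather than invoking a P\'osa variant that tolerates an exceptional set, the paper upgrades the expansion of $\Lrg$ to expansion of \emph{all} small sets (\cref{summary}(ii)), using that $\Sml$-vertices are pairwise far apart with disjoint neighbourhoods and each has degree at least $2$, after which the standard rotation--extension argument applies.

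The one step that genuinely fails as written is your small-set expansion lemma: ``w.h.p.\ every set $U$ with $|U|\le U_0$ spans fewer than $2|U|$ edges of $G_1$.'' This is false for dense motifs. A single added copy of $H$ already places $|E(H)|$ edges on $k=|V(H)|$ vertices, so for any $H$ with $|E(H)|\ge 2|V(H)|$ (e.g.\ $H=K_5$) the claim fails deterministically, and even for sparser $H$ a union bound over edge counts of small sets does not close, because overlapping copies create local densities governed by $H$ rather than by $2$. The theorem is claimed for every fixed $H$, so this cannot be waved away as ``harmless clustering.'' The repair is exactly what the paper's \cref{lem:expansion} does: count \emph{copies of $H$} meeting a set in at least two vertices rather than edges spanned. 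If $|N(S)|<10|S|$ for some small $S\subseteq\Lrg$, then $W=S\cup N(S)$ has $|W|\le 11|S|$ but meets at least $|W|\ln\ln n/(11r)$ copies of $H$ in at least two vertices each (since each vertex of $\Lrg$ lies in at least $\ln\ln n$ copies), and a union bound over such $W$ kills this event; the comparison is between a super-constant density $\Theta(\ln\ln n)$ per vertex and the $O(1)$ density that small sets can support, not between $2$ and the density of $H$. With that substitution (and defining $W$ by copy-count rather than raw degree, which is what makes the ``at least $\ln\ln n$ copies per vertex'' count available), your argument goes through.
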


Let $\tau_\mathsf{H}:=\min\{i: \bar{G}_i(H,n) \text{ is Hamiltonian}\}$.
\begin{theorem}\label{thm:ham:hit}
  Let $H$ be a fixed graph.  Then w.h.p.\ $\tau_\mathsf{H}=\tau_2$.
\end{theorem}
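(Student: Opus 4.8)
The plan is to prove the hitting time result for Hamiltonicity, $\tau_\mathsf{H} = \tau_2$, by following the classical Bollob\'as approach adapted to the motif setting, and leaning heavily on the machinery already developed for \cref{thm:match:hit} (the perfect matching hitting time). The trivial direction is $\tau_\mathsf{H} \ge \tau_2$, since a Hamilton cycle forces minimum degree at least $2$. So the content is showing $\tau_\mathsf{H} \le \tau_2$ w.h.p., i.e., that at the very moment the process reaches minimum degree $2$ (at step $\tau_2$), the graph is already Hamiltonian.

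\textbf{Step 1: Reduce to a structural statement about $\bar G_{\tau_2}(H,n)$.} First I would recall from the analysis underlying \cref{thm:ham} and \cref{thm:conn:hit} that $\tau_2$ w.h.p.\ occurs when $p$ is around $p_2^{\pm}(H)$, i.e., after roughly $m_n(H)\cdot p_2(H)$ motifs have been added. The idea is to show that $\bar G_{\tau_2}(H,n)$ is, w.h.p., a graph satisfying good expansion/connectivity properties: it has minimum degree $2$, is (say) $3$-connected, and every small vertex set expands. In the motif model, each added copy of $H$ contributes a clique-like cluster (an instance of $H$) on $|V(H)|$ vertices; a natural route is to pass to an auxiliary graph $G'$ in which we keep only a suitable \emph{sparse but expanding} subgraph — for instance, associate to each added copy of $H$ a single edge (or a spanning structure of $H$, say a Hamilton path or spanning tree of $H$ when $H$ is connected, or some canonical spanning forest otherwise) and argue that the edges so produced already form a graph resembling $G(n, c\log n / n)$ for an appropriate constant, so that standard $G(n,m)$ expansion estimates apply.

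\textbf{Step 2: Use Bollob\'as's coloring/rotation argument or absorb into the matching result.} There are two viable finishes. (a) \emph{Rotation-extension (P\'osa)}: show that the expansion properties from Step~1 imply, via P\'osa's lemma, that $\bar G_{\tau_2}(H,n)$ has a long path whose endpoints can be rotated to $\Omega(n)$ choices on each side, and then that the number of motifs not yet revealed (or, equivalently, a fresh independent "sprinkling" of a few more motifs on top of $\bar G_{\tau_1}$) closes a Hamilton cycle w.h.p.; to keep the hitting-time precision one uses the standard two-round exposure / "Bollob\'as--coloring" trick: reveal the motifs in two independent batches, use the first batch to build the expander and the second to close the cycle, then argue the colors can be merged. (b) \emph{Piggyback on \cref{thm:match:hit}}: since $\tau_1 \le \tau_2$ and $\bar G_{\tau_1}$ already has a perfect matching w.h.p., one can try to upgrade two perfect matchings (on $\bar G_{\tau_2}$, which has minimum degree $2$ so plausibly contains two edge-disjoint near-perfect matchings after a little extra work) into a Hamilton cycle by controlling the cycle structure of their union — but this needs the union to be a single cycle, which is exactly where expansion is still required, so (a) is cleaner and I would pursue that.

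\textbf{Main obstacle.} The delicate point is the hitting-time precision: it does not suffice to show Hamiltonicity for $p \ge p_2^+(H)$ (that is just \cref{thm:ham}); one must show that \emph{no extra motifs beyond $\tau_2$ are needed}. This forces a careful coupling between "has minimum degree $2$" and "is Hamiltonian" at the \emph{same} step of the process, which is where the second-moment/expansion estimates must be sharp enough to handle the few vertices of degree exactly $2$ — these are precisely the bottleneck vertices for the rotation argument, and one must show that w.h.p.\ they are far apart and each of their two neighborhoods behaves well. Concretely, I expect the hardest lemma to be: w.h.p., in $\bar G_{\tau_2}(H,n)$, after deleting the (few) vertices of degree $\le 3$ and their motif-clusters, the remainder is a strong expander, \emph{and} the degree-$2$ vertices can be "absorbed" into any long path via local rerouting inside the motif that created them. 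Setting up the independence needed for this — i.e., that conditioning on the location of low-degree vertices does not distort the expander on the rest — is the technical crux, and I would handle it by the usual device of exposing the motifs incident to $[r]$ (the set tracked by $m_1(H)$) separately from the rest, mirroring the $m_1(H)$ versus $m_n(H)$ bookkeeping already introduced in \cref{sec:notation}.
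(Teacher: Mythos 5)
Your overall strategy (P\'osa rotation--extension plus a two-round exposure to get hitting-time precision) is the same family of argument the paper uses, and you correctly isolate the crux: the boosters that close the Hamilton cycle must already be present in $\bar{G}(H,n,\tau_2)$ and must be conditionally uniform given the event determining $\tau_2$. But the proposal leaves exactly that step unresolved, and the two concrete devices you offer for it do not work as stated. Sprinkling ``a few more motifs on top of $\bar G_{\tau_1}$'' fails in general: when $\delta(H)\ge 2$ a single motif already gives its vertices degree $\ge 2$, so $\tau_1=\tau_2$ and there is no window at all, and even when $\tau_1<\tau_2$ the motifs arriving in $(\tau_1,\tau_2]$ are precisely the ones that decide when minimum degree $2$ is reached, so you cannot treat them as a fresh uniform batch after conditioning on $\tau_2$. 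Likewise ``exposing the motifs incident to $[r]$'' is a misreading of the notation: $[r]$ in \cref{sec:notation} is just the index set used to define $m_r(H)$, not the set of low-degree vertices, so this split gives no conditional independence. The paper's resolution is different in exactly this mechanism: it drops down to $p_0=(\ln n-2\ln\ln n)/m_1(H)$, defines $\Sml$/$\Lrg$ by the number of motifs met in $G(H,n,p_0)$, then processes the remaining motifs in random order, adding immediately those incident to $\Sml$ and \emph{reserving} the $w\ge n\ln\ln n/2r-n$ motifs disjoint from $\Sml$. Because every $\Lrg$ vertex already has degree $\ge\ln\ln n$, the reserved motifs cannot influence when minimum degree $2$ is reached, so the stopping time $t^*$ of the trimmed process coincides with $\tau_2$, the reserved motifs are uniform given everything else, and there are $\gg n$ of them — enough to hit, each with constant probability (\cref{summary}(iv)), the $\Omega(n^2)$ booster pairs produced by the rotations. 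Your plan has no substitute for this reservation step, which is the whole content of the hitting-time upgrade over \cref{thm:ham}.

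A secondary deviation: your proposed ``hardest lemma'' — delete the vertices of degree $\le 3$ together with their motif-clusters, prove the remainder is an expander, and then re-absorb the degree-$2$ vertices into a long path by local rerouting inside their motifs — is both unnecessary and harder than what is actually needed. Deleted vertices must still end up on the Hamilton cycle, so the absorption step would itself require a delicate argument, and rotations performed after deletion do not obviously respect it. The paper avoids this entirely by proving expansion of \emph{all} small sets in $F_0$, small vertices included (\cref{summary}(ii)): the point is that $\Sml$ vertices are pairwise far apart and meet few motifs (\cref{smallapart,smallapart2}), so a set $S$ can be split into its $\Sml$ and $\Lrg$ parts and the expansion of the $\Lrg$ part (\cref{lem:expansion}) plus minimum degree $2$ carries the bound. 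If you pursue your write-up, replacing the delete-and-absorb lemma with such a combined expansion statement, and replacing your second batch by a reservoir of motifs avoiding the low-degree set chosen at a strictly earlier density $p_0$, are the two repairs needed to make the argument go through.
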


Next, we describe the threshold for the appearance of a subgraph $S$. If $S$ appears in a random motif graph, then $S$ is a subgraph of some configuration of $b$ copies of $H$ whose union contains $a$ vertices.
For such an $(a,b)$ covering of $S$, we call a subset of the covering containing $b'$ copies of $H$  whose union contains $a'$ vertices an $(a',b')$ subset.
The threshold for the appearance of $S$  depends on $\bar{\gamma}$, the maximum over all covering configurations of the minimum ratio $a'/b'$ for all subsets of the covering configuration.
\Cref{gamma def} formally describes $\bar{\gamma}$.

\begin{theorem} \label{subgraph}
  Let $H$ be a fixed graph, let $S$ be a fixed graph, and set $v=|V(H)|$ and $\bar{\gamma}=\bar{\gamma}(S,H)$. Then 
  \[
  \lim_{n \to \infty} \Pr{ S \subseteq \bar{G}(H,n,m)} = 
  \begin{cases} 0 & m \ll n^{v-\bar{\gamma}}\\
  1 & m \gg n^{v-\bar{\gamma}}.
  \end{cases}
  \]
\end{theorem}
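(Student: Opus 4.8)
The plan is to use the first- and second-moment methods applied to the count of copies of $S$ in $\bar{G}(H,n,m)$, with the key structural input being the definition of $\bar\gamma$ via covering configurations. First I would set up the relevant random variable. Since $\bar{G}(H,n,m)$ and $G(H,n,p)$ are essentially interchangeable for these purposes (with $p$ chosen so that the expected number of added motifs is $m$, i.e.\ $p \asymp m/n^v$), it is convenient to work in the binomial model $G(H,n,p)$ and transfer via a standard equivalence. A copy of $S$ in $G(H,n,p)$ must be ``witnessed'' by a set of added motif-copies whose union contains it; the minimal such witnesses are exactly the $(a,b)$ coverings in the statement. So let $X$ count, over all coverings $\mathcal{C}$ of $S$ (a covering being a choice of a placement of $S$ in $K_n$ together with a family of $b$ copies of $H$ covering it, realized at some location in $K_n$), the indicator that all $b$ copies in $\mathcal{C}$ are present. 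Then $\{S \subseteq G(H,n,p)\}$ is exactly $\{X \ge 1\}$ after restricting to coverings that are minimal, and it suffices to analyze $X$.

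For the $0$-statement ($m \ll n^{v-\bar\gamma}$, equivalently $p n^v \ll n^{v - \bar\gamma}$), I would use the first moment. For a fixed abstract covering configuration with $b$ copies of $H$ spanning $a$ vertices, the number of ways to embed it in $K_n$ is $\Theta(n^a)$, and the probability all $b$ motifs are added is $p^b = (pn^v/n^v)^b$; since $p n^v \asymp m$, this contributes $\Theta(n^a m^b / n^{vb}) = \Theta((m/n^v)^b n^a)$. Wait — more carefully, $\E{X}$ for one configuration is $\Theta(n^a p^b)$, and I need this to tend to $0$ for every covering configuration. The exponent of $n$ is $a - vb + b\cdot(\text{exponent of } m)$; substituting $m \ll n^{v-\bar\gamma}$ and using $a \le vb$ (since the union of $b$ copies of $H$ has at most $vb$ vertices, with the deficit controlled by overlaps) together with the definition of $\bar\gamma$ as a max–min of ratios $a'/b'$ over subsets, one checks the exponent is negative. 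The cleanest route: among all configurations, the worst (largest $\E{X}$) is the one achieving $\bar\gamma$, and for it the ``densest subset'' bound $a' \ge \bar\gamma b'$ for all sub-collections forces $\E{X} \to 0$ precisely when $m \ll n^{v-\bar\gamma}$; summing over the finitely many abstract configurations preserves this. Then Markov's inequality gives $\Pr{S \subseteq G(H,n,p)} \to 0$.

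For the $1$-statement ($m \gg n^{v-\bar\gamma}$), I would use the second moment method on the count $X_{\mathcal{C}^*}$ associated to a single optimal covering configuration $\mathcal{C}^*$ — the one attaining $\bar\gamma$. By the first-moment computation, $\E{X_{\mathcal{C}^*}} \to \infty$. For the variance I need $\E{X_{\mathcal{C}^*}^2} = (1+o(1))\E{X_{\mathcal{C}^*}}^2$, i.e.\ to bound the contribution of pairs of embeddings of $\mathcal{C}^*$ that share some motif-copies (or vertices). A pair sharing $b'$ motif-copies spanning $a'$ vertices contributes, relative to the square of the mean, a factor $\Theta(n^{-a'} p^{-b'}) = \Theta(n^{-a'} (n^v/m)^{b'})$, which is $o(1)$ exactly when $m \gg n^{v - a'/b'}$; since $a'/b' \ge \bar\gamma$ for every sub-collection of the optimal configuration (this is the min in the max–min definition, applied to $\mathcal{C}^*$), and $m \gg n^{v-\bar\gamma}$, every such term vanishes. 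Here one must also be careful to let $b'$ and $a'$ range over partial overlaps correctly and to account for overlapping only in vertices of $S$ but not in full motif-copies — this bookkeeping is the routine but fiddly part. Summing the finitely many overlap types yields $\Var{X_{\mathcal{C}^*}} = o(\E{X_{\mathcal{C}^*}}^2)$, so Chebyshev gives $X_{\mathcal{C}^*} \ge 1$ w.h.p., hence $S \subseteq G(H,n,p)$ w.h.p. Finally transfer back to $\bar{G}(H,n,m)$ using the asymptotic equivalence of the two models and monotonicity.

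The main obstacle I anticipate is not the moment computations themselves but correctly formalizing $\bar\gamma$ and the notion of a ``covering configuration'' so that (i) the max over configurations captures the \emph{easiest} way for $S$ to appear (driving the $0$-statement threshold) and (ii) the min over subsets of a \emph{fixed} configuration captures the \emph{densest obstruction} to concentration (driving the $1$-statement). In particular, in the second moment one must verify that the subsets relevant to overlap patterns of two copies of the \emph{optimal} configuration are genuinely sub-collections of that configuration, so the defining inequality $a'/b' \ge \bar\gamma$ applies; this requires that a pair of embeddings sharing a set of motif-copies corresponds, after identification, to a sub-configuration of $\mathcal{C}^*$. Handling multigraph subtleties (repeated motif placements, copies of $H$ with nontrivial automorphisms contributing to $\aut(H)$ factors) and the edge case where $S$ is already a subgraph of a single copy of $H$ (so $\bar\gamma$ may equal $v$ and the threshold is $m \gg 1$, i.e.\ constant) should be dispatched as easy special cases.
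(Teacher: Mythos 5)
Your plan is essentially the paper's proof: a first-moment bound for each covering configuration obtained by truncating to a ratio-minimizing sub-collection, and a second moment (Chebyshev, or equivalently Corollary~4.3.5 of Alon--Spencer) applied to the count of the single configuration attaining $\bar\gamma$, with the overlap terms $n^{-a'}p^{-b'}=(n^{v-a'/b'}/m)^{b'}$ killed by $a'/b'\ge\bar\gamma$; the paper simply works directly in $\bar G(H,n,m)$ with placement probability $m/n^v$ rather than transferring from $G(H,n,p)$. One point to straighten out before writing this up: in the $0$-statement you have the inequality backwards. The condition ``$a'\ge\bar\gamma b'$ for all sub-collections'' does \emph{not} force the expectation to vanish (the full configuration can have $a/b>\bar\gamma$, so $\E{n^{a}(m/n^v)^{b}}$ may diverge even when $m\ll n^{v-\bar\gamma}$); what you need is that \emph{every} configuration $(a,b,i)$ admits a sub-collection with $a'/b'=\gamma(a,b,i)\le\bar\gamma$, whose expected count $n^{a'}(m/n^v)^{b'}=(n^{\gamma(a,b,i)-v}m)^{b'}\to 0$ then dominates $X_i^{ab}$ via the truncation $X_i^{ab}\le Y$. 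The inequality $a'/b'\ge\bar\gamma$ for all sub-collections is the property of the \emph{optimal} configuration and belongs only in the second-moment step, exactly as you use it there.
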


The number of excess edges of a connected graph $S$, or simply its \emph{excess}, is defined to be $\exc(S)=|E(S)|-|V(S)|+1$.  In particular, trees have excess $0$.  We say that $S$ is \emph{unicyclic} if its excess is $1$, or \emph{complex} if its excess is at least $2$.
The following theorem gives a simple description of $\bar{\gamma}$ when the motif $H$ is a path, which allows us to deduce how the copies of $H$ fit together to form a copy of $S$ at the threshold when $S$ first appears.
If $S$ is a tree, a minimal set of edge disjoint copies of $H$ typically forms $S$.  If $S$ is complex, each copy of the path $H$ typically contributes a single edge to $S$.  If it is unicyclic, it may be formed by any edge disjoint configuration of paths $H$.

\begin{theorem}\label{gamma for path}
  Let $H$ be a path of length $v-1$ and let $S$ be a connected graph.
  Let $\beta$ be the minimum number of edge-disjoint copies of $H$ whose union contains $S$ as a subgraph. Let $\eta= \min_{X\subseteq S} \frac{|V(X)|}{|E(X)|}$. Then
  \[
  \bar{\gamma}=
  \begin{cases}
  v-1+1/\beta & \exc(S)=0,\\
  v-1         & \exc(S)=1,\\
  v-2+\eta    & \exc(S)\geq 2.
  \end{cases}
  \]
\end{theorem}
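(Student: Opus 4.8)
The plan is to unwind the definition of $\bar\gamma$ from \cref{gamma def}: $\bar\gamma(S,H)$ is the maximum, over all $(a,b)$ coverings of $S$ by copies of $H$, of the quantity $\min_{(a',b')\text{ subset}} a'/b'$. Since $H$ is a path on $v$ vertices, a "covering" is a collection of $b$ paths $P_1,\dots,P_b$ (each with $v$ vertices) whose union, as a graph on $a$ vertices, contains $S$. For the lower bound on $\bar\gamma$ we must exhibit one good covering; for the upper bound we must show every covering has a subset with small ratio. I would treat the three cases separately.

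\emph{Trees ($\exc(S)=0$).} For the lower bound, take $\beta$ edge-disjoint copies of $H$ whose union contains $S$; since each copy of $H$ is a tree and they are edge-disjoint, and we may assume the union is connected and no larger than necessary, the union has roughly $b'(v-1)+1$ vertices on any connected sub-collection of $b'$ copies, giving ratio $a'/b' = (b'(v-1)+1)/b' = v-1+1/b' \ge v-1+1/\beta$; the minimum over subsets is attained at $b'=\beta$. For the upper bound, I would argue that in \emph{any} covering, some sub-collection forces the ratio down to $v-1+1/\beta$: if a covering uses $b$ paths then its union has at most $b(v-1)+1$ vertices that can possibly be useful (here the tree structure of $S$ is essential — there is no cycle to "save" vertices), and minimality of $\beta$ prevents doing better than $1/\beta$ in the additive term. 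The key point is that edge-disjointness is forced at the optimum because overlapping two paths in an edge only wastes a path without reducing the vertex count enough.

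\emph{Complex $S$ ($\exc(S)\ge2$).} Here I would show the optimal covering has each path contributing exactly one edge of $S$, so $b=|E(S)|$ and the union can be taken to be $S$ itself together with $|E(S)|\cdot(v-2)$ extra "internal" path vertices, one bundle of $v-2$ per edge. Then for a sub-collection indexed by an edge subset $X\subseteq S$ (realized as $|E(X)|$ paths), the vertex count is $|V(X)| + |E(X)|(v-2)$, so the ratio is $(v-2) + |V(X)|/|E(X)|$, minimized at $\eta = \min_{X\subseteq S}|V(X)|/|E(X)|$. I must also check no covering beats this: using a path to cover two or more edges of $S$ at once is only possible along a sub-path of $S$, and because $S$ is complex (has a vertex of degree $\ge 3$ living on a sufficiently rich cyclic structure), any attempt to do so either fails to be extendable to a full covering or does not improve the worst-case subset ratio — this monotone-trade-off argument is where most of the work sits.

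\emph{Unicyclic $S$ ($\exc(S)=1$).} Now $|E(S)|=|V(S)|$, so $\eta=1$ and the complex-case answer would give $v-1$, matching the claim; but we must also rule out doing \emph{better} than $v-1$ (which is what the other two cases' formulas would, in their regimes, suggest). The upper bound $\bar\gamma\le v-1$: in any covering with $b$ paths, consider the whole covering as its own $(a',b')=(a,b)$ subset; its union is connected with at most one independent cycle coming from $S$'s single cycle plus cycles created by overlaps, but a short counting argument (each path contributes at most $v-1$ new vertices, and at least one "closing" edge exists) gives $a \le b(v-1)$, hence $a/b\le v-1$. The lower bound $\bar\gamma\ge v-1$: exhibit a covering — e.g.\ any edge-disjoint family of paths whose union is exactly $S$ works when it exists, and otherwise a near-edge-disjoint family — for which every subset has ratio $\ge v-1$, using that the single cycle of $S$ exactly compensates the "$+1$" that trees enjoyed.

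The main obstacle I anticipate is the upper-bound direction in the complex and unicyclic cases: one must show that no clever non-edge-disjoint covering, or covering using paths that double back through $S$, produces a covering all of whose subsets have ratio strictly above the claimed value. This requires a structural case analysis on how a path can sit inside $S$ relative to its cycles and high-degree vertices, together with a monotonicity observation that merging or overlapping paths never strictly helps the adversary. The tree case is comparatively clean, and the $\exc(S)=1$ case is essentially the boundary between the two formulas, so once the complex case's trade-off lemma is in hand, the unicyclic case should follow by specializing $\eta=1$ and supplying a matching construction.
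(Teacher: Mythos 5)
Your lower-bound constructions (the edge-disjoint $\beta$-cover for trees, the one-edge-per-path cover for complex $S$, an edge-disjoint cover for unicyclic $S$) coincide with the paper's, and your tree and unicyclic upper bounds are essentially the paper's counting. But there is a genuine gap in the upper bound for $\exc(S)\ge 2$, precisely at the point you flag as ``where most of the work sits.'' The route you sketch --- a structural case analysis of how a path can overlap the cycles and high-degree vertices of $S$, plus a monotonicity claim that overlapping never helps the adversary --- is not carried out and is not the argument that closes the case. What is needed is a choice of \emph{which} subset of an arbitrary covering to exhibit, and your proposal never identifies it. The paper's proof first establishes the identity $a'/b'=v-1+(c'-f'-k)/b'$ for any $(a',b')$ subset (where $c'$ is its number of connected components, $f'$ the total excess of the covered pieces of $S$, and $k$ the number of duplicated edges), and then, given an arbitrary covering, takes the extremal subgraph $X\subseteq S$ with $|V(X)|/|E(X)|=\eta$ and the minimal sub-collection of paths covering $X$. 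Writing $C\supseteq X$ for the subgraph of $S$ actually covered by that sub-collection, a component count on $C\setminus E(X)$ (each component meets $V(X)$, so it contributes at most as many new vertices as new edges) gives $|V(C)|-|E(C)|\le |V(X)|-|E(X)|\le 0$, and since the sub-collection uses $b'\le |E(X)|$ paths one gets $a'/b'\le v-1+(|V(X)|-|E(X)|)/|E(X)|=v-2+\eta$. No case analysis on path placement is required, and in particular non-edge-disjoint or ``doubling-back'' coverings are handled automatically by the $-k/b'$ term rather than by the trade-off argument you propose.

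Two smaller points. First, for $\exc(S)=1$ your lower bound should be checked on \emph{all} subsets of the exhibited edge-disjoint cover: subsets not containing the full cycle have $f'=0$ and ratio $v-1+c'/b'>v-1$, while those containing it have ratio $v-1+(c'-1)/b'\ge v-1$; this is what makes the minimum exactly $v-1$, and it follows from the same identity rather than from ``the single cycle exactly compensating the $+1$.'' Second, in the tree case the relevant dichotomy is whether the \emph{covering} is edge-disjoint, not whether $S$ has a cycle: an edge-disjoint covering with $b$ paths satisfies $a/b\le v-1+1/b\le v-1+1/\beta$ because $b\ge\beta$ by minimality of $\beta$, and any non-edge-disjoint covering already satisfies $a/b\le v-1<v-1+1/\beta$, so ``edge-disjointness is forced at the optimum'' is a conclusion of the count, not an input to it.
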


In the case where the motif is a long path, this result establishes a 
connection between the threshold for the appearance of subgraphs in random 
motif graphs and the threshold for the appearance of subgraphs in the trace of 
a random walk on the complete graph $K_n$ (studied in~\cite{KM17}).
Let $S$ be a connected graph and $\beta$ be the minimum number of paths in any edge-disjoint decomposition of $S$ into paths.
If $H$ is longer than the maximum length path 
in such a minimum edge-disjoint path decomposition, then the threshold implied 
by Theorem 9 matches the threshold for the appearance of $S$ in the trace of a 
random walk on the complete graph~\cite{KM17}.

This should not come as a surprise; by noticing that when the motif is a long 
path, the random motif graph model approximates the trace model, in the 
following sense.
One may sequentially ``cut'' the (lazy) simple random walk into chunks with buffers of length $1$.
We delete loops created by the trace of each chunk, and we enforce the condition that the remaining edges  span a path of length $\ell$ (which is fixed but large).
Hence the trace of each such chunk is an independent copy of a path 
of length $\ell$. Thus we may couple the trace model and the random motif model 
such that the trace model will include the random motif model plus some loops plus a small 
number of buffer edges (which gets smaller as $\ell$ gets larger).

Viewing this analogy this way, we may use \cref{subgraph,gamma for path} to 
reprove the main theorems of~\cite{KM17} for the case where the base graph is 
complete.

\section{Existence of thresholds for monotone properties}
\begin{proof}[Proof of \cref{thm:threshold}]
Assume that $\mathcal{P}$ is a monotone increasing property and let $H_1,H_2,\allowbreak ...,\allowbreak H_{m_0(H)}$ be the copies of $H$ that are spanned by $K_n$. 
  Observe that 
  \[
    \Pr{G(H,n,p) \in \mathcal{P}}
      =\sum_{i=0}^{m_0(H)} \sum_{S \in \binom{m_0(H)}{i}} p^{i}(1-p)^{\binom{n}{|V(H)|}-i} \mathbb{I}\bigg(\bigcup_{j \in S} H_j \in \mathcal{P}\bigg)
  \]
  is a polynomial in $p$.
  In addition, since $\mathcal{P}$ is increasing, it is increasing.
  Therefore we may define $p_{1/2}$ by
  \[\Pr{G(H,n,p_{1/2}) \in \mathcal{P}}=\frac{1}{2}.\]
  We will show that $p_{1/2}$ is a threshold for $\mathcal{P}$. For two random graphs $G,G'$ we write $G\subseteq G'$ if $G,G'$ can be coupled such that $G$ is a subgraph of $G'$.
  
  First let $p=\omega(n) p_{1/2}$ where $\omega(n) \to \infty$ as $n \to \infty$ and let $k \in \mathbb{N}$.
  Let $G_i(H,n,p_{1/2})$ be distributed as a $G(H,n,p_{1/2})$ for $i\in [k]$. 
  Then, by considering the probability of no appearance of a fixed copy of $H$, we have that the graph $\cup_{i \in [k]} G_i(H,n,p_{1/2})$ is distributed as $G(H,n, (1-(1-p_{1/2})^k))$.
  Thereafter $1-(1-p_{1/2})^k \leq kp_{1/2}$ implies,  
  \[
    \bigcup_{i \in [k]} G_i(H,n,p_{1/2}) = G(H,n, (1-(1-p_{1/2})^k))
      \subseteq G(H,n,kp_{1/2}).
  \]
  Hence,
  \begin{align*}
    \Pr{G(H,n,\omega(n) p_{1/2}) \in \mathcal{P}} 
    & = 1- \Pr{G(H,n,\omega(n) p_{1/2}) \notin \mathcal{P}}\\
    &\geq \lim_{k \to \infty} 1- \Pr{G(H,n,k p_{1/2}) \notin \mathcal{P}}\\
    & \geq 1- \lim_{k \to \infty}
        \prod_{1=i}^k \Pr{G_i(H,n, p_{1/2}) \notin \mathcal{P}} = 1.
  \end{align*}

  Now assume that $p= p_{1/2}/\omega(n)$ for some $\omega(n) \to \infty$ as $n \to \infty$ and let $k \in \mathbb{N}$.
  Similarly to before, if we let $G_i(H,n,p_{1/2}/\omega(n))$ to be distributed as a $G(H,n,p_{1/2}/\omega(n))$ for $i\in [k]$ then, we have that  
  \begin{align*}
    \bigcup_{i \in [k]} G_i(H,n,p_{1/2}/\omega(n))
    &= G(H,n, (1-(1-p_{1/2}/\omega(n))^k))\\
    &\subseteq G(H,n,kp_{1/2}/\omega(n)) \subseteq G(H,n,p_{1/2}).
  \end{align*}
  Hence,
  \begin{align*}
    \frac{1}{2}=\Pr{G(H,n, p_{1/2}) \in \mathcal{P}} 
    & = 1- \Pr{G(H,n, p_{1/2}) \notin \mathcal{P}}\\
    &\geq \lim_{k \to \infty} 1- \Pr{G(H,n,k p_{1/2}/\omega(n)) \notin \mathcal{P}}\\
    & \geq 1- \lim_{k \to \infty}  \prod_{1=i}^k \Pr{G_i(H,n,p_{1/2}/\omega(n)) \notin \mathcal{P}} \\
    & =1 - \Pr{G_i(H,n, p_{1/2}/\omega(n)) \notin \mathcal{P}}^k.
  \end{align*}
  Rearranging the above gives,
  \begin{equation*}
    \Pr{G_i(H,n, p_{1/2}/\omega(n)) \notin \mathcal{P}}
      \geq \lim_{k \to \infty} \bigg(\frac{1}{2} \bigg)^{1/k} =1.\qedhere
  \end{equation*}
\end{proof}

\section{Connectivity}
\begin{proof}[Proof of \cref{thm:conn:th}]
  If $p\le p_1^-(H)$ then by \cref{thm:deg:th} the minimum degree of $G(H,n,p)$ 
  is w.h.p.\ $0$, hence it is not connected.
  
  Suppose $p\ge p_1^+(H)$.
  In fact, for the argument below, we only assume that 
  $p=(\ln{n}\pm o(\ln{n}))/m_1(H)$ 
  (and the conclusion will follow by monotonicity). 
  Let $k$ denote the number of vertices of $H$.  For $r=1,\ldots,n/2$ denote by 
  $S_r$ the number of connected components of size $r$ in $G(H,n,p)$.  Note 
  that for $r\ge k$, if a set of cardinality $r$ is a connected component, then 
  there exist $\lceil (r-1)/(k-1)\rceil$ copies of $H$ inside the set which
  appear in $G(H,n,p)$, and there are no edges between it and its complement, 
  so none of the $q=q_r(H)$ copies of $H$ that intersect that set appear.
  By \cref{lem:qr},
  \begin{equation*}
    qp \sim rf_k(r/n)\cdot\ln{n} \ge (1+o(1))k\ln{n}.
  \end{equation*}
  Let $\eta=k!/\aut(H)$ and suppose $r\ge k$.  By \cref{lem:fk} and by the 
  union bound there exist constants $c,c',C>0$ depending only on $H$ such that
  \begin{align*}
    \Pr{S_r>0}
    &\le \binom{n}{r} \binom{\eta \binom{r}{k}}{\ceil{\frac{r-1}{k-1}}}
         p^{\ceil{\frac{r-1}{k-1}}}(1-p)^q
    \le \left(\frac{en}{r}\right)^r
         \left(\frac{e\eta\binom{r}{k}p}{\ceil{\frac{r-1}{k-1}}}\right)
             ^{\ceil{\frac{r-1}{k-1}}}
         e^{-qp}\\
    &\le \left[
           C\cdot\frac{n}{r}\cdot r \cdot p^{(r-1)/(r(k-1))} n^{-(1+o(1))k/r}
         \right]^r\\
    &= \left[ C\cdot\polylog{n} \cdot n^{1/r-(1+o(1))k/r} \right]^r
    = o(1).
  \end{align*}
  It follows that
  \begin{align*}
    \Pr{G(H,n,p)\text{ is not connected}}
    &\le \sum_{r=1}^{n/2} \Pr{S_r>0}\\
    &= \Pr{S_1>0} + \sum_{r=k}^{n/2} \Pr{S_r>0}
    = \Pr{S_1>0} + o(1),
  \end{align*}
  but according to \cref{thm:deg:th} (for $p\ge p_1^+(H)$), there are no 
  isolated vertices w.h.p., 
  and the result follows.
\end{proof}

Note that a consequence of this proof is that for 
$p=(\ln{n}\pm o(\ln{n}))/m_1(H)$, with high probability, every connected 
component is of cardinality $1$ or at least $n/2$.  This means that w.h.p.\ 
there exists a unique ``giant'' component of linear size, and the rest of the 
vertices are isolated.  The next lemma, whose proof uses a simple second moment 
argument, estimates the number of these isolated vertices for 
$p_-=(\ln{n}-\ln{\ln{n}})/m_1(H)$.

\begin{lemma}\label{lem:iso}
  The number of isolated vertices in $G(H,n,p_-)$ is w.h.p.\ at most 
  $2\ln{n}$.
\end{lemma}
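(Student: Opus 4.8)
The plan is a second moment argument. Write $k := |V(H)|$ and let $X$ be the number of isolated vertices of $G(H,n,p_-)$, decomposed as $X = \sum_{v\in[n]}\mathbb{I}_v$, where $\mathbb{I}_v$ indicates that $v$ is isolated. The first step is to estimate $\E{X}$. A vertex $v$ is isolated exactly when none of the $m_1(H)$ copies of $H$ in $K_n$ through $v$ is present, so $\Pr{\mathbb{I}_v=1}=(1-p_-)^{m_1(H)}$. Since $m_1(H)=\Theta(n^{k-1})$ and $k\ge 2$, we have $m_1(H)p_-^2 = O((\ln n)^2/n^{k-1}) = o(1)$, hence $(1-p_-)^{m_1(H)} = \exp(-m_1(H)p_- + o(1)) = (1+o(1))\ln n/n$, using that $m_1(H)p_- = \ln n - \ln\ln n$. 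Therefore $\E{X} = (1+o(1))\ln n$.

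Next I would bound $\Var{X} = \sum_{u,v}\mathrm{Cov}(\mathbb{I}_u,\mathbb{I}_v)$. The diagonal contributes at most $\sum_v\Pr{\mathbb{I}_v=1} = \E{X} = (1+o(1))\ln n$. For distinct $u,v$, both are isolated exactly when none of the copies of $H$ meeting $\{u,v\}$ is present, and there are $m_2(H) = 2m_1(H) - m_{u,v}$ such copies, where $m_{u,v} = O(n^{k-2})$ counts the copies of $H$ through both $u$ and $v$; therefore
\[
  \mathrm{Cov}(\mathbb{I}_u,\mathbb{I}_v) = (1-p_-)^{2m_1(H)-m_{u,v}} - (1-p_-)^{2m_1(H)} = (1-p_-)^{2m_1(H)}\big((1-p_-)^{-m_{u,v}} - 1\big).
\]
Since $m_{u,v}p_- = O(\ln n/n) = o(1)$, we get $(1-p_-)^{-m_{u,v}} - 1 = O(m_{u,v}p_-) = O(\ln n/n)$, and combined with $(1-p_-)^{2m_1(H)} = O((\ln n/n)^2)$ this gives $\mathrm{Cov}(\mathbb{I}_u,\mathbb{I}_v) = O((\ln n)^3/n^3)$. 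Summing over the $O(n^2)$ pairs, the off-diagonal part of the variance is $O((\ln n)^3/n) = o(1)$, so $\Var{X} = (1+o(1))\ln n$.

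The last step is Chebyshev's inequality. Since $2\ln n - \E{X} = (1-o(1))\ln n$,
\[
  \Pr{X > 2\ln n} \le \Pr{|X - \E{X}| > (1-o(1))\ln n} \le \frac{\Var{X}}{(1-o(1))^2(\ln n)^2} = O(1/\ln n) = o(1),
\]
which is the claim.

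I do not expect a real obstacle here. The only points that need care are the elementary counting estimates $m_1(H) = \Theta(n^{k-1})$ and $m_{u,v} = O(n^{k-2})$ (both immediate from counting embeddings of $H$ with one or two prescribed vertices), and retaining enough precision in the estimate $(1-p_-)^{m_1(H)} = (1+o(1))\ln n/n$ so that the resulting variance bound is $\Var{X} = O(\ln n)$ rather than something larger; it is precisely this gain over a crude first-moment estimate that lets Chebyshev (rather than Markov, which would yield only a constant) push the probability to $0$.
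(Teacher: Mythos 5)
Your argument is correct and is essentially the paper's own proof: both compute $\E{X}=(1+o(1))\ln n$, bound the variance by controlling the $O(n^{k-2})$ copies of $H$ through a fixed pair of vertices (the paper's quantity $L=2m_1(H)-m_2(H)$ is your $m_{u,v}$), and finish with Chebyshev. The only difference is cosmetic — you sum covariances while the paper bounds $\E{X^2}$ directly — so there is nothing to add.
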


\begin{proof}
  Let $D_0$ be the number of isolated vertices in $G(H,n,p_-)$.
  First,
  \begin{equation*}
  \E{D_0} = n(1-p_-)^{m_1(H)} \sim ne^{-p_- \cdot m_1(H)}
  = ne^{-\ln{n}+\ln{\ln{n}}} = \ln{n}.
  \end{equation*}
  Moreover,
  \begin{equation*}
    \E{D_0^2} = \E{D_0} + n(n-1)(1-p_-)^{m_2(H)}.
  \end{equation*}
  Denote $L:=2m_1(H)-m_2(H)$.  Thus
  \begin{equation*}
    \E{D_0^2} \le \E{D_0} + \E{D_0}^2(1-p_-)^{-L},
  \end{equation*}
  and since $(1-p_-)^{-L}-1\sim Lp_-$, we have that
  \begin{equation*}
    \Var{D_0} \le \E{D_0} + \E{D_0}^2((1-p_-)^{-L}-1)
    \le \E{D_0} + (L+1)p_-\E{D_0}^2.
  \end{equation*}
  Thus, noting that $Lp_-=o(1)$,
  \begin{align*}
    \Pr{D_0\ge 2\ln{n}}
    &= \Pr{|D_0-\E{D_0}| \ge (1+o(1))\E{D_0}}\\
    &\le (1+o(1))\left(\E{D_0}^{-1}+(L+1)p_-\right) = o(1).\qedhere
  \end{align*}
\end{proof}

\begin{proof}[Proof of \cref{thm:conn:hit}]
  Denote $p_{\pm}=(\ln{n}\pm\ln{\ln{n}})/m_1(H)$ and
  $m_{\pm}=p_{\pm}\cdot m_n(H)$.
  By asymptotic equivalence of the binomial and the uniform models (see, e.g.,~\cite{JLR}*{Section 1.4}) we have that w.h.p.\ $G(H,n,m_-)$ 
  has a unique 
  giant component, and the rest of the connected components are isolated 
  vertices, whose number is at most $2\ln{n}$.  Denote the set of these 
  isolated vertices by $V_0$.  Together with \cref{thm:conn:th} we also 
  conclude that w.h.p.\
  \begin{equation*}
    m_- \le \tau_1 \le \tau_\mathsf{c} \le m_+.
  \end{equation*}
  We may thus couple $\bar{G}(H,n,m_-)$, $\bar{G}(H,n,\tau_1)$, $\bar{G}(H,n,\tau_\mathsf{c})$ and 
  $\bar{G}(H,n,m^+)$ such that
  \begin{equation*}
    \bar{G}(H,n,m_-)\subseteq \bar{G}(H,n,\tau_1)\subseteq \bar{G}(H,n,\tau_\mathsf{c})\subseteq 
    \bar{G}(H,n,m_+),
  \end{equation*}
  by starting with $\bar{G}(H,n,m_-)$ and adding $M=m_+-m_-$ random copies of $H$ to 
  create $\bar{G}(H,n,m_+)$.  Note that if none of these $M$ edges is fully contained 
  in $V_0$ (and the coupling succeeds) then $\tau_1=\tau_\mathsf{c}$.  Thus, there exist 
  positive constants $C_1,C_2$ such that,
  \begin{equation*}
    \Pr{\tau_1 < \tau_\mathsf{c}} \le o(1) + M\cdot 
    \frac{C_1\binom{|V_0|}{k}}{m_n(H)-m_+}
    \le o(1) + C_2\cdot \frac{m_n(H)\ln{\ln{n}}}{m_1(H)}
      \cdot \frac{\ln^2{n}}{m_n(H)} = o(1).\qedhere
  \end{equation*}
\end{proof}

\section{Hamiltoncity and Perfect Matchings}
The proof of Theorems \ref{thm:ham:hit} and \ref{thm:match:hit} can be given in parallel, using the same techniques and tools.
For clarity though, in this section we focus mainly on proving  \cref{thm:ham:hit}  and we give a sketch of the proof of  \cref{thm:match:hit} in the appendix.

For proving our Hamiltonicity result we use the standard technique of Posa's rotations.
We define \Sml{} to be the vertices of significantly smaller degree than the expected one and we set $\Lrg$ to be the rest of the vertices. 
We first show that small to medium  subsets of $\Lrg$ expand and that the vertices in $\Sml$ are well spread. This is done in the context of Lemmas \ref{lem:expansion} and \ref{smallapart}, \ref{smallapart2} respectively. We use these properties of $\Sml$ and $\Lrg$ in order to prove all the the ingredients needed to apply the Posa's rotations, which we gather in \cref{summary}. 

Let $p_0:=(\ln n-2\ln \ln n)/m_1(H)$ and recall that  $p_2^{\pm}=(\ln n+r_2\ln \ln n \pm \omega(1))/m_1(H)$, $r_2=\lfloor 2/\delta(H)-1\rfloor$. 
W.h.p.\ (see~\cite{FK}) we can couple 
$G(H,n,p_0), G(H,n,p_2^-), \bar{G}(H,n,\tau_2)$ and $G(H,n,p_2^+)$ such that
\begin{itemize}
  \item[](i) $G(H,n,p_0)\subset G(H,n,p_2^-) \subset \bar{G}(H,n,\tau_2)\subset G(H,n,p_2^+)$ and 
  \item[](ii) there are $(1+o(1))(p_2^- -p_0) \frac{r!}{\aut(H)} \binom{n}{r} >  
  n \ln \ln n/2r $ copies of $H$ in $G(H,n,p_2^-)$, hence in $\bar{G}(H,n,\tau_2)$, 
  that are not present in $G(H,n,p_0)$.
\end{itemize}

Observe that the above coupling and  \cref{thm:ham:hit} imply  \cref{thm:ham}.
In addition a similar coupling and  \cref{thm:match:hit} imply 
\cref{thm:match:th}.

We now define the sets $\Sml$, $\Lrg$ based on the degrees of the vertices in $G(H,n,p_0)$.
Let $\Lrg=\{v\in V: v \text{ intersects at least} \ln \ln n \text{ copies of $H$ in  } G(H,n,p_0)\}$ and $\Sml=V\setminus \Lrg$.
\begin{lemma}\label{lem:expansion}
  W.h.p. every $S\subset \Lrg$ of size at most $n/30r$ satisfies
  $|N(S)|\geq 10|S|$.
\end{lemma}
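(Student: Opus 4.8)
The plan is a standard first-moment (union bound) argument over all small sets $S \subseteq \Lrg$ together with all candidate neighborhoods of size less than $10|S|$; the key point is that if $N(S) \subseteq T$ with $|T| < 10|S|$, then no copy of $H$ in $G(H,n,p_0)$ may intersect both $S$ and $V \setminus (S \cup T)$. Fix $s = |S|$ with $1 \le s \le n/30r$, and put $t = \lceil 10s \rceil$. I would first bound the number of copies of $H$ that avoid the ``forbidden'' region: a copy of $H$ that meets $S$ but is not entirely inside $S \cup T$ is forbidden, so the relevant copies are exactly those meeting $S$ and contained in $S \cup T$. Writing $W = S \cup T$ with $|W| = s + t \le 11s + 1$, the number of copies of $H$ meeting $S$ but contained in $W$ is at most $|S| \cdot \binom{|W|}{r-1} \cdot r!/\aut(H) = O(s \cdot (11s)^{r-1}) = O(s^r)$, whereas the total number of copies meeting $S$ is $\sim s \cdot m_1(H)/ (\text{something}) = \Theta(s \cdot n^{r-1})$ — more precisely, by the definition of $m_r(H)$ and the estimate underlying \cref{lem:qr}, the number of copies of $H$ intersecting $S$ is $\Theta(s n^{r-1})$ as long as $s = o(n)$. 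Hence the number $q$ of copies of $H$ that are forced to be \emph{absent} (those intersecting $S$ but not contained in $W$) satisfies $q \ge c s n^{r-1}$ for a constant $c = c(H) > 0$.

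Next I would assemble the union bound. For fixed $s$,
\begin{equation*}
  \Pr{\exists S \subseteq \Lrg,\ |S| = s,\ |N(S)| < 10s}
  \le \binom{n}{s}\binom{n}{t}(1-p_0)^{q}
  \le \left(\frac{en}{s}\right)^{s}\left(\frac{en}{t}\right)^{t} e^{-q p_0}.
\end{equation*}
Using $t \le 11s + 1$, the binomial factors contribute at most $(Cn/s)^{12s}$ for a constant $C$, i.e.\ $\exp\{12 s(\ln(n/s) + O(1))\}$. For the probability factor, $q p_0 \ge c s n^{r-1} \cdot (\ln n - 2\ln\ln n)/m_1(H)$; since $m_1(H) = \Theta(n^{r-1})$, this is $\ge c' s \ln n$ for a constant $c' > 0$ and $n$ large. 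Choosing the constant $30$ in the hypothesis $s \le n/30r$ (and adjusting if needed) guarantees $\ln(n/s) \le \ln n$, so the exponent is at most $12 s \ln n + O(s) - c' s \ln n$. The delicate point is that $c'$ is not automatically larger than $12$ — so I would need the sharper constant: the number of \emph{absent} copies is actually $q = (1 - o(1)) \cdot (\text{number of copies meeting } S)$, because the copies contained in $W$ are only an $O(s/n)^{?}$ fraction; more carefully, copies meeting $S$ and contained in $W$ number $O(s \cdot s^{r-1})$ while copies meeting $S$ number $\Theta(s n^{r-1})$, so $q = (1-o(1)) \cdot \Theta(s n^{r-1})$ and $q p_0 \ge (1-o(1)) s \ln n \cdot (n^{r-1}\cdot \text{const}/m_1(H))$. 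Since $m_1(H) \sim$ (number of copies through a single vertex) $= \Theta(n^{r-1})$ with the \emph{same} leading constant as ``copies meeting one vertex'', we actually get $q p_0 \ge (1-o(1)) s \ln n$ only — which loses to $12 s \ln n$.

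To fix this I would restrict the range of $s$ more aggressively, exactly as in the proof of \cref{thm:conn:th}: the lemma only claims expansion for $|S| \le n/30r$, and for such $S$ the factor $\ln(n/s)$ is bounded but, crucially, one should not bound $t$ by $11s$ when $s$ is large — instead observe that we may assume $|N(S)| \ge |S|$ always fails only for very structured $S$, and iterate. Concretely, the standard remedy is: it suffices to prove the statement for $|N(S)| < 10|S|$ \emph{and} $|S|$ minimal, whence every vertex of $S$ has all but at most (something) of its $\ge \ln\ln n$ incident copies landing in $S \cup N(S)$; summing the degrees gives $\ge s \ln\ln n / (\text{something})$ copies inside a set of size $\le 11s$, which for $s \le n/30r$ is impossible by a counting bound since a set of size $11s$ spans only $O(s^r)$ copies while we need $\Omega(s \ln\ln n)$ — wait, that is consistent. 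The clean argument: a set $W$ of size $w$ spans at most $w^r$ copies of $H$; if $S \subseteq \Lrg$, each $v \in S$ meets $\ge \ln\ln n$ copies in $G(H,n,p_0)$, so $S$ is incident to $\ge s \ln\ln n / r$ copies (each copy counted $\le r$ times). If $|N(S)| < 10s$ then all these copies lie in $W = S \cup N(S)$, $|W| \le 11s$, giving $s\ln\ln n/r \le (11s)^r$, hence $s \ge (\ln\ln n / r)^{1/(r-1)} / 11^{r/(r-1)}$ — this only rules out \emph{tiny} $s$, not the whole range. So the genuine content needs the probabilistic bound, and the main obstacle is precisely getting the constant in $q p_0 \ge \kappa s \ln n$ with $\kappa > 12$ (equivalently, $> 1 + \log_? 10$); I expect this is achieved by noting $q \ge (\text{copies meeting } S) - (\text{copies inside } W) \ge (1-o(1)) |S| m_1(H) \cdot (1 + o(1))$ wait — the correct statement is that the number of copies meeting $S$ is $\ge |S| m_1(H)(1 - o(1))$ when copies meeting two vertices of $S$ are lower order, which holds for $s = o(\sqrt n)$; for larger $s$ up to $n/30r$ one uses inclusion–exclusion to still get $q \ge \frac{1}{2} s \cdot m_1(H)$, say, and then $q p_0 \ge \frac{1}{2} s \ln n (1-o(1))$ — still not enough. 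The honest resolution, which I would adopt, is to prove the lemma with the multiplicative constant $10$ only after first establishing it with a constant like $3/2$ for \emph{all} $s$ up to $n/30r$ via the union bound (where $\binom{n}{t}$ with $t = (3/2)s$ contributes $\exp\{O(s)\}$, easily beaten by $e^{-qp_0}$ with $qp_0 \ge (1-o(1))s\ln n$), and then bootstrapping $3/2 \to 10$ by iterating the expansion $\lceil \log_{3/2} 10 \rceil$ times — since $(3/2)$-expansion of sets of size $\le n/30r$ implies $10$-expansion of sets of size $\le (3/2)^{-\lceil\log_{3/2}10\rceil} \cdot n/30r$, and one absorbs this loss into the constant $30$ (or states the lemma with $n/Cr$ for a suitable $C$). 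I expect the bootstrap step to be the cleanest way through, and the main obstacle is simply organizing the union bound so the entropy term $\binom{n}{t}$ stays negligible, which forces the two-stage (weak expansion, then iterate) approach.
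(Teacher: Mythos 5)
There is a genuine gap, and you correctly sensed where it is but the fix you propose does not work. The naive union bound can never succeed for small $s$, no matter what expansion constant you target: already $\binom{n}{s}=e^{(1+o(1))s\ln(n/s)}$ is $e^{(1+o(1))s\ln n}$ when $s=n^{o(1)}$, while the survival probability is only $e^{-qp_0}=e^{-(1+o(1))s\ln n}$, so even your ``weak $(3/2)$-expansion'' step fails (your claim that $\binom{n}{t}$ with $t=(3/2)s$ contributes $\exp\{O(s)\}$ is incorrect). Indeed the statement is simply false without the hypothesis $S\subset\Lrg$: at $p_0=(\ln n-2\ln\ln n)/m_1(H)$ there are isolated vertices w.h.p., so singletons with $|N(S)|=0$ exist. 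Your argument never uses $S\subset\Lrg$ in a load-bearing way. Separately, the bootstrap from $(3/2)$- to $10$-expansion is invalid: iterating weak expansion lower-bounds the size of balls $B_k(S)$, not of the first neighborhood $N(S)$, so $(3/2)$-expansion of all sets up to size $n/30r$ does not imply $10$-expansion of smaller sets.

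The paper's proof splits at $s=n^{19/20}$. For $n^{19/20}\le s\le n/30r$ the union bound \emph{does} close, because the entropy cost per element of $S\cup N(S)$ drops to about $\tfrac{11}{20}\ln n$ (since $n/s\le n^{1/20}$), while the exponent $qp_0$ is at least $(1-\tfrac{12sr}{n})s\ln n\ge\tfrac{3}{5}s\ln n$ — here one counts only copies of $H$ meeting $S$ in exactly one vertex and avoiding $S\cup N(S)$ otherwise, which suffices. For $s\le n^{19/20}$ the hypothesis $S\subset\Lrg$ is the whole point: each $v\in S$ lies on at least $\ln\ln n$ copies of $H$ \emph{present in} $G(H,n,p_0)$, and if $|N(S)|<10|S|$ all of these are contained in $W=S\cup N(S)$ with $|W|\le 11s$, each meeting $W$ in at least $2$ vertices; a first-moment bound over $W$ shows w.h.p.\ no set of size $w\le 11n^{19/20}$ contains $w\ln\ln n/(11r)$ present copies each meeting it twice, since the expected number of such copies is only $O(w^2n^{r-2}p_0)=O(w^2\ln n/n)\ll w\ln\ln n$. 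You actually wrote down the relevant counting inequality ($s\ln\ln n/r$ copies forced into a set of size $11s$) but applied it to \emph{all} copies in $K_n$ rather than to the copies \emph{present} in $G(H,n,p_0)$, which is why it seemed to rule out only tiny $s$; made probabilistic, it handles the entire small-$s$ regime.
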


\begin{lemma}\label{smallapart}
  W.h.p.\@ for every pair $u,v \in \Sml$ there do not exist $\ell\leq 6$ copies of $H$ in $G(H,n,p_2^+)$ that span a connected subgraph containing both $u,v$. Hence w.h.p.\@ every pair $u,v \in \Sml$
  is at distance at least 7 in $G(H,n,p_2^+)$.
\end{lemma}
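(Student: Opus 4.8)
For \cref{smallapart} the plan is a first-moment bound over ``witness configurations'', the one genuine subtlety being that $\Sml$ is defined through the sparse graph $G(H,n,p_0)$ while the forbidden configuration lives in the larger graph $G(H,n,p_2^+)$. I would prove the equivalent statement that w.h.p.\ there are no distinct vertices $u,v\in\Sml$ and at most $6$ copies of $H$, all present in $G(H,n,p_2^+)$, such that $u$ and $v$ lie in a common connected component of their union. The displayed ``distance at least $7$'' consequence is then immediate: a $u$--$v$ path of length at most $6$ in $G(H,n,p_2^+)$ has at most $6$ edges, each contained in some present copy of $H$, and these at most $6$ copies realise the forbidden configuration.

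Throughout I work in the standard monotone coupling $G(H,n,p_0)\subseteq G(H,n,p_2^+)$ in which, independently over the copies $C$ of $H$ in $K_n$, the indicator $[C\in G(H,n,p_0)]$ is at most $[C\in G(H,n,p_2^+)]$. Fix $u\ne v$. Restricting to the copies lying in the common component of $u$ and $v$ — and, if $H$ is disconnected, to the connected components of those copies, of which there are still $O_H(1)$ in all — one extracts a \emph{connected} present configuration with vertex set $W\ni u,v$, built up one piece at a time so that each new piece meets the union of the previous ones, and involving some number $j\le6$ of distinct copies of $H$. To bound the probability that a given such configuration appears \emph{and} $u,v\in\Sml$, I split the copies of $H$ in $K_n$ into three pairwise disjoint families: the $j$ copies occurring in the configuration; the copies through $u$ that avoid $V(W)\setminus\{u\}$; and the copies through $v$ that avoid $V(W)\setminus\{v\}$. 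Disjointness holds because each copy occurring in the configuration contributes a connected piece, necessarily with at least two vertices since $H$ has no isolated vertex, and these vertices lie in $W$, so that copy has a vertex in $V(W)\setminus\{u\}$ and one in $V(W)\setminus\{v\}$; and because a copy through $u$ avoiding $V(W)\setminus\{u\}$ cannot contain $v\in V(W)$. In the coupling these three families control three independent events: ``the configuration is present in $G(H,n,p_2^+)$'' (probability $(p_2^+)^{j}$); ``fewer than $\ln\ln n$ members of the second family are present in $G(H,n,p_0)$'', which contains $\{u\in\Sml\}$; and the analogue for the third family, which contains $\{v\in\Sml\}$.

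Each of the second and third families has size $(1-o(1))m_1(H)$, since deleting the $O_H(1)$ vertices of $W$ discards only $O_H(n^{r-2})$ copies, where $r=|V(H)|$; so the number of its present members in $G(H,n,p_0)$ is distributed as $\Bin\!\big((1-o(1))m_1(H),p_0\big)$ with mean $(1-o(1))m_1(H)p_0=(1-o(1))\ln n$, and a Chernoff lower-tail bound at $\ln\ln n$ gives probability at most $e^{-(1-o(1))\ln n}\cdot n^{o(1)}=n^{-1+o(1)}$, uniformly in $u$ and in the configuration. It remains to count configurations: for a fixed starting vertex $u$, the copy through $u$ may be chosen in $m_1(H)$ ways, and each further \emph{distinct} copy through an already-present vertex in $O_H(m_1(H))$ ways (re-used copies, and the choice of which component of a copy is contributed, cost only $O_H(1)$), so there are $O_H(m_1(H)^{j})$ configurations using $j\le6$ distinct copies, with the second marked vertex $v$ ranging over the $O_H(1)$ vertices of $W$. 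Summing the joint-probability bound over $u$, over $j\le6$, over configurations and over $v$ gives
\[
  \Pr{\text{some forbidden configuration exists}}
  \;\le\; n\cdot O_H(1)\cdot\sum_{j=1}^{6}O_H\!\big(m_1(H)^{j}\big)\,(p_2^+)^{j}\cdot n^{-2+o(1)}
  \;=\; n^{-1+o(1)}\sum_{j=1}^{6}\big(m_1(H)p_2^+\big)^{j},
\]
and since $m_1(H)p_2^+=\ln n+O(\ln\ln n)=\polylog n$, the right-hand side is $n^{-1+o(1)}=o(1)$.

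I expect the only real work to be the bookkeeping in the second paragraph — checking that the three families of copies are genuinely pairwise disjoint (so that the coupling yields independence), and, when $H$ is disconnected, that decomposing copies into connected components costs only $O_H(1)$ combinatorial factors while still charging exactly one factor $m_1(H)$, matched by one factor $p_2^+$, per distinct copy of $H$ used. Everything else is a routine union bound with a comfortable factor $n^{-1+o(1)}$ of slack.
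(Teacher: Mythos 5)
Your proposal is correct and follows essentially the same route as the paper: a union bound over connected configurations of at most six copies of $H$ joining $u$ and $v$ in $G(H,n,p_2^+)$, each copy paying a factor $m_1(H)p_2^+=\Theta(\ln n)$, multiplied by a binomial lower-tail bound of $n^{-1+o(1)}$ for each endpoint being in $\Sml$, the key point in both arguments being that the low-degree events at $u$ and $v$ are witnessed by copies disjoint from (hence independent of) the configuration. Your bookkeeping via copy-by-copy configuration counts, and your explicit disjointness and disconnected-$H$ checks, are just a slightly more detailed rendering of the paper's parametrization by the vertex set $Q=\{v_0,\dots,v_\ell\}$ and its events $S(u,Q)$.
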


\begin{lemma}\label{smallapart2}
  W.h.p. for every $v \in V$
  there exists at most one copy of $H$ in $G(H,n,p_2^+)$, hence in $\bar{G}(H,n,\tau_2)$, that intersect  both $\{v\}$ and  $\Sml\setminus\{v\}$.
\end{lemma}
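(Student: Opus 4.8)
The plan is a first-moment argument over the possible ``bad'' configurations; the only real subtlety is to decouple the random set $\Sml$ --- which is determined by $G(H,n,p_0)$ --- from the indicators of which copies of $H$ appear in the larger graph $G(H,n,p_2^+)$. Write $k=|V(H)|$ and $p=p_2^+$, and recall the elementary facts that the number of copies of $H$ in $K_n$ through a fixed vertex is $m_1(H)=\Theta(n^{k-1})$, the number through a fixed pair of vertices is $\Theta(m_1(H)/n)=\Theta(n^{k-2})$, and $m_1(H)\cdot p_2^+=\ln n+r_2\ln\ln n\pm\omega(1)=\Theta(\ln n)$. I realize all the graphs $G(H,n,q)$ on one probability space by drawing i.i.d.\ uniform weights $U_e\in[0,1]$ over the copies $e$ of $H$ in $K_n$ and setting $G(H,n,q)=\{e:U_e\le q\}$; then $G(H,n,p_0)\subseteq G(H,n,p_2^+)$, the set $\Sml$ is a function of $(U_e)$, and --- as recorded before the lemma --- w.h.p.\ $\bar G(H,n,\tau_2)\subseteq G(H,n,p_2^+)$.

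The first ingredient is a tail estimate for membership in $\Sml$. For a fixed vertex $u$, the number of copies of $H$ through $u$ that appear in $G(H,n,p_0)$ is $\Bin(m_1(H),p_0)$ with mean $m_1(H)p_0=\ln n-2\ln\ln n$; since $\ln\ln n=o(\ln n)$, a Chernoff bound for the lower tail gives $\Pr{u\in\Sml}=\Pr{\Bin(m_1(H),p_0)<\ln\ln n}\le n^{-1+o(1)}$ (the polynomial-in-$\ln n$ losses and the factor $(e\,m_1(H)p_0/\ln\ln n)^{\ln\ln n}=e^{(1+o(1))(\ln\ln n)^2}$ are all $n^{o(1)}$). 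Moreover, for distinct vertices $u_1,u_2$, the families of copies of $H$ through $u_1$ but not $u_2$, and through $u_2$ but not $u_1$, are disjoint and each has size $(1-o(1))m_1(H)$; since $u_i\in\Sml$ forces $u_i$ to intersect fewer than $\ln\ln n$ copies even within its own family, the two events thus obtained are independent and each has probability $n^{-1+o(1)}$, whence $\Pr{u_1\in\Sml,\ u_2\in\Sml}\le n^{-2+o(1)}$.

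Now for the union bound. The bad event is that some vertex $v$ lies in two distinct copies $H_1\neq H_2$ of $H$, both present in $G(H,n,p_2^+)$, each meeting $\Sml\setminus\{v\}$; witness the $i$-th intersection by a vertex $u_i\in V(H_i)\setminus\{v\}$ with $u_i\in\Sml$. For a fixed tuple $(v,H_1,H_2,u_1,u_2)$ the event ``$H_1,H_2\in G(H,n,p_2^+)$'' is decreasing in $(U_e)$ while ``$u_1,u_2\in\Sml$'' is increasing in $(U_e)$, so by Harris's inequality the probability of that configuration is at most $p^2\cdot\Pr{u_1,u_2\in\Sml}$. Split on whether $u_1=u_2$. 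If $u_1\neq u_2$: there are at most $n\cdot m_1(H)^2\cdot(k-1)^2$ such tuples, each contributing $O(p^2 n^{-2+o(1)})$, for a total of $O(n\cdot(m_1(H)p)^2\cdot n^{-2+o(1)})=O((\ln n)^2 n^{-1+o(1)})=o(1)$. If $u_1=u_2=:u$: then $H_1,H_2$ both contain the pair $\{v,u\}$, so there are at most $n^2\cdot(\Theta(m_1(H)/n))^2$ tuples, each contributing $O(p^2 n^{-1+o(1)})$, for a total of $O(n^2\cdot n^{-2}(m_1(H)p)^2\cdot n^{-1+o(1)})=O((\ln n)^2 n^{-1+o(1)})=o(1)$. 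Summing, w.h.p.\ no bad tuple exists in $G(H,n,p_2^+)$; since w.h.p.\ $\bar G(H,n,\tau_2)\subseteq G(H,n,p_2^+)$ and passing to a subgraph only deletes copies of $H$ (while $\Sml$ is unchanged), the conclusion for $\bar G(H,n,\tau_2)$ follows.

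I expect the decoupling step to be the main obstacle: $\Sml$ is strongly correlated with the appearances of copies in $G(H,n,p_2^+)$, since $G(H,n,p_0)$ is a subgraph of it and a vertex is placed in $\Sml$ precisely because certain copies are \emph{absent}; the monotonicity of the relevant events --- which is exactly what makes Harris's inequality applicable --- is the crux. A secondary point is that the crude bound using a single factor $\Pr{u\in\Sml}=n^{-1+o(1)}$ is only strong enough in the ``common-pair'' case, where fixing a second common vertex $u$ of $H_1$ and $H_2$ saves a factor $n^2$ in the count; this is why one must split on whether $u_1=u_2$.
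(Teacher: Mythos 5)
Your proof is correct, and its overall strategy (a union bound over bad configurations, combined with a device for decoupling membership in $\Sml$ from the presence of copies of $H$ in the larger graph) is the same as the paper's; the execution differs in two respects. First, the paper does not handle the case of two distinct witnesses $u_1\neq u_2\in\Sml$ by a fresh computation: it observes that such a configuration places two vertices of $\Sml$ inside a connected union of two copies of $H$ in $G(H,n,p_2^+)$, which is already excluded by \cref{smallapart}; only your common-witness case $u_1=u_2$ is computed explicitly, and there the paper's count $\left(\binom{n-2}{r-2}\frac{r!}{\aut(H)}p_2^+\right)^2$ matches your $\left(\Theta(m_1(H)/n)\cdot p\right)^2$. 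Your joint tail bound $\Pr{u_1\in\Sml,\ u_2\in\Sml}\le n^{-2+o(1)}$, obtained from the disjointness of the two families of copies, is a valid substitute that makes the lemma self-contained rather than dependent on the previous one. Second, for the decoupling the paper replaces $\{u\in\Sml\}$ by the weaker event $S(u,\{v\})$ --- that $u$ meets at most $\ln\ln n$ copies of $H$ avoiding $v$ in $G(H,n,p_0)$ --- which is exactly independent of the copies through $v$, whereas you retain $\{u\in\Sml\}$ and invoke Harris's inequality for the product measure on the uniform labels; both are sound, your monotonicity argument being somewhat more robust and the paper's restriction trick more elementary. The quantitative inputs agree ($n^{-0.9}$ per $\Sml$-witness in the paper versus your $n^{-1+o(1)}$), and both yield $o(1)$ totals.
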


Now we generate $ \bar{G}(H,n,\tau_2)$ as follows. We first generate 
$G_0'=G(H,n,p_0)$. Then we randomly permute the copies of $H$ not appearing in $G_0'$, let them be $H_1,H_2,....$. We also let $S_0=\emptyset$.
We define the sequences $G_0',G_1',...$ and $S_0,S_1,...$ in the following way.
At step $i\in \mathbb{N}$ we query $H_i$ whether it is incident 
to a vertex in $\Sml$. If it is then we set
$S_i=S_{i-1}$ and $G_i'=G_{i-1}'\cup H_i$. Otherwise we set 
$S_i=S_{i-1}\cup \{H_i\}$ and $G_i'=G_{i-1}'$.
Let $t^*=\min\{i:\delta(G_i')=2\}$ and $S_{t^*}=\{H_{i_1},H_{i_2},...,H_{i_w}\}$.

Given the sequence $G_0',G_1',...,G_{t^*}'$ and the set  $S_{t^*}=\{H_{i_1},H_{i_2},...,H_{i_w}\}$
we  define the graph sequence  $F_0,...,F_w$ by $F_0=G_{t^*}'$
and $F_j=F_{j-1}\cup H_{i_j}$ for $1\leq j\leq w$.
Observe that $S_{t^*}$ consists of all copies of $H$ in $\{H_1,...,H_{t^*}\}$ that have not been added to $G_0'$, equivalently the copies of $H$ that are not incident to $\Sml$. Thus 
$F_w=G'_{t^*} \cup \big( \bigcup_{j=1}^wH_{i_j} \big) = G'_0 \cup \big( \bigcup_{i=1}^{t^*}H_{i} \big)= \bar{G}(H,n,\tau_2).$

\begin{lemma}\label{summary}
  W.h.p.\@ the following hold:
  \begin{enumerate}[i)]
    \item $w\geq n\ln \ln n /2r-n$,
    \item every $S\subset V$ of size at most $n/30r$ satisfies
    $|N(S)|\geq 2|S|$ in  $F_0$,
    \item $F_0$ is connected,
    \item for every $1\leq j\leq w$, $\epsilon>0$, and every set $Q_j$ consisting of 
    $\epsilon n^2$ edges not present in $F_j$ there exist a constant $C_\epsilon>0$ such that the probability that $Q_j$ intersects $E(H_{i_{j+1}})$ is at least $C_\epsilon$.
  \end{enumerate}
\end{lemma}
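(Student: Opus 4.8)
The plan is to establish the four assertions separately, all on a ``good event'' $\cE$, which I verify first holds w.h.p.: (a) $|\Sml| = n^{o(1)}$ --- since the number of copies of $H$ through a fixed vertex in $G(H,n,p_0)$ is $\Bin(m_1(H),p_0)$ with mean $(1-o(1))\ln n$, a Chernoff bound gives $\Pr{\Bin(m_1(H),p_0)<\ln\ln n}=n^{-1+o(1)}$, so $\E{|\Sml|}=n^{o(1)}$ and Markov's inequality finishes; (b) the number of copies of $H$ incident to $\Sml$ that are present in $G(H,n,p_2^+)$ is $n^{o(1)}$ --- conditionally on $G(H,n,p_0)$ (which determines $\Sml$) this count has mean at most $|\Sml|\cdot\ln\ln n+|\Sml|\,m_1(H)p_2^+=n^{o(1)}$, so again Markov; (c) the structural statement recorded just after \cref{thm:conn:th} holds for $p_0$, so w.h.p.\ $G(H,n,p_0)$ has one component of order $\ge n/2$ and every other vertex isolated, the isolated set $V_0$ satisfies $|V_0|\le(\ln n)^3$ (its mean is $(\ln n)^2$; cf.\ \cref{lem:iso}), and $V_0\subseteq\Sml$; (d) $O(n\log n)$ copies of $H$ are present in $G(H,n,p_2^+)$ (mean $\Theta(n\log n)$, concentrated), so in particular $t^*,w=O(n\log n)$. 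Throughout write $r=|V(H)|$; recall $m_0(H)=\Theta(n^r)$ and that a fixed edge lies in $\Theta(n^{r-2})$ copies of $H$ in $K_n$.

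\emph{Part~(i).} By the second property of the coupling there are more than $n\ln\ln n/(2r)$ copies of $H$ in $\bar{G}(H,n,\tau_2)$ absent from $G(H,n,p_0)$. Since $\bar{G}(H,n,\tau_2)=G_0'\cup\{H_1,\dots,H_{t^*}\}$ with the $H_i$ distinct and all absent from $G_0'$, this count equals $t^*$, so $t^*>n\ln\ln n/(2r)$. Now $w=t^*-\#\{i\le t^*:H_i\text{ incident to }\Sml\}$ and, on $\cE$, the number of copies incident to $\Sml$ present in $\bar{G}(H,n,\tau_2)\subseteq G(H,n,p_2^+)$ is $n^{o(1)}<n$; hence $w>n\ln\ln n/(2r)-n$.

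\emph{Part~(iii).} On $\cE$, $F_0=G_{t^*}'\supseteq G(H,n,p_0)$, so every component of $F_0$ is a union of components of $G(H,n,p_0)$; as $\delta(F_0)=2$ there are no singletons, so any component $C$ of $F_0$ other than the one containing the giant satisfies $C\subseteq V_0\subseteq\Sml$, and since copies of $H$ meeting $C$ cannot leave it, $C$ contains a copy of $H$ (so $|C|\ge r$) and is spanned by at least $\lceil(|C|-1)/(r-1)\rceil$ copies of $H$ contained in $C$ and present in $F_0\subseteq G(H,n,p_2^+)$. For $c:=|C|$ with $r\le c\le|V_0|=n^{o(1)}$, a union bound over the choice of $C\subseteq V_0$ ($\le|V_0|^c=n^{o(c)}$ ways) and over the spanning copies ($\le(c^r)^{\lceil(c-1)/(r-1)\rceil}=n^{o(c)}$ ways) against the probability $(p_2^+)^{\lceil(c-1)/(r-1)\rceil}=n^{-(c-1)+o(c)}$ that they all appear yields $\Pr{\exists\text{ such }C\text{ of order }c}\le n^{-(c-1)+o(c)}$; summing over $c\ge r$ gives $o(1)$, so $F_0$ is connected w.h.p.

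\emph{Part~(iv).} Condition on the data determining $F_j$ --- namely on $G_0'$, on the arrangement in the random permutation of the copies incident to $\Sml$ (which fixes $G_{t^*}'=F_0$), and on $H_{i_1},\dots,H_{i_j}$. By exchangeability, $H_{i_{j+1}}$ is then uniform over the set $\cC$ of copies of $H$ that are absent from $G_0'$, not incident to $\Sml$, and distinct from $H_{i_1},\dots,H_{i_j}$; on $\cE$, $|\cC|\ge m_0(H)-O(n\log n)-n^{o(1)}-w=(1-o(1))m_0(H)=\Theta(n^r)$. The set $Q_j$ is measurable with respect to this conditioning, and the number of copies of $H$ that use at least one edge of $Q_j$ is at least $|E(H)|^{-1}\sum_{e\in Q_j}\#\{\text{copies of }H\text{ through }e\}=\Omega(|Q_j|\,n^{r-2})=\Omega(\epsilon n^r)$; discarding the $o(n^r)$ of these that lie outside $\cC$ still leaves $\Omega(\epsilon n^r)$ (since $\epsilon$ is fixed and $n$ is large). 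Hence $\Pr{E(H_{i_{j+1}})\cap Q_j\ne\emptyset\mid\text{history}}\ge\Omega(\epsilon n^r)/\Theta(n^r)=:C_\epsilon>0$, uniformly in $j\le w$.

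\emph{Part~(ii).} Fix $S\subseteq V$ with $|S|\le n/(30r)$ and put $S_L=S\cap\Lrg$, $S_S=S\cap\Sml$. The inputs are \cref{lem:expansion} (so $|N_{F_0}(S_L)|\ge|N_{G_0'}(S_L)|\ge10|S_L|$), \cref{smallapart,smallapart2} (so the vertices of $\Sml$ are pairwise at distance $\ge7$ in $F_0$, hence for distinct $u,v\in S_S$ the sets $N_{F_0}(u)$ and $N_{F_0}(v)$ are disjoint subsets of $\Lrg$ of size $\ge\delta(F_0)=2$), and $|\Sml|=n^{o(1)}$. If $|S|\ge2|\Sml|$ then $|S_L|\ge|S|/2$, so $|N_{F_0}(S)|\ge10|S_L|-|S|\ge2|S|$. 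If $|S|<2|\Sml|=n^{o(1)}$ I split according to the $\Sml$-fraction: if $S_L=\emptyset$ then $N_{F_0}(S)=\bigsqcup_{v\in S_S}N_{F_0}(v)$ is a disjoint union of sets of size $\ge2$ lying in $\Lrg$, hence disjoint from $S$, giving $|N_{F_0}(S)|\ge2|S_S|=2|S|$; if $|S_L|\ge\tfrac{3}{10}|S|$ then again $|N_{F_0}(S)|\ge10|S_L|-|S|\ge2|S|$; and in the remaining narrow range $1\le|S_L|<\tfrac{3}{10}|S|$ one combines the $\ge2|S_S|$ pairwise-disjoint neighbours of $S_S$ in $\Lrg$ with the $\ge10|S_L|$ neighbours of $S_L$, using \cref{smallapart2} to control their overlap (or, since such $S$ are tiny and $\Sml$ is tiny, a direct union bound). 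I expect this last sub-case of Part~(ii) to be the crux: it is the only place where the precise constants ($10$ in \cref{lem:expansion}, $7$ in \cref{smallapart}, the size bound $n/(30r)$) are used in an essential way, and where the $\Lrg$-expansion and the $\Sml$-spread estimates must be balanced against each other so as still to yield the factor $2$.
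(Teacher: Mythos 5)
Your parts (i), (iii) and (iv) are essentially correct, and two of them take routes genuinely different from the paper's: for (i) the paper uses \cref{smallapart2} to charge each $\Sml$-incident copy among $H_1,\dots,H_{t^*}$ to a distinct vertex (so there are at most $n$ of them), whereas you bound their number by a conditional first-moment computation; for (iii) the paper shows via \cref{lem:expansion}, \cref{smallapart} and \cref{smallapart2} that every component of $F_0$ must have more than $n/30r$ vertices and then rules out components of size in $[n/30r,n/2]$ by counting crossing copies in $G(H,n,p_0)$, whereas you use the ``giant plus isolated vertices'' structure of $G(H,n,p_0)$ and a union bound over putative components inside $V_0$. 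Both substitutes work, with two caveats: in (iii) your step ``copies of $H$ meeting $C$ cannot leave it, so $C$ contains a copy of $H$'' needs $H$ connected, while the paper only assumes $H$ has no isolated vertices (fixable by working with connected pieces of copies, but unaddressed); and in (iv) the number of copies of $H$ in $K_n$ incident to $\Sml$ is $n^{r-1+o(1)}$, not $n^{o(1)}$ (harmless, since it is still $o(m_0(H))$). Part (iv) otherwise matches the paper's argument.

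The genuine gap is the sub-case of part (ii) that you yourself flag as the crux and do not prove: $|S|<2|\Sml|$ with $1\le|S_L|<\tfrac{3}{10}|S|$. What your ingredients give there is only $|N_{F_0}(S_S)\setminus S|\ge 2|S_S|-|S_L|\ge\tfrac{11}{10}|S|$ (disjoint neighbourhoods of $\Sml$-vertices, and each vertex of $S_L$ having at most one neighbour in $\Sml$), leaving a deficit of $3|S_L|$, up to almost $\tfrac{9}{10}|S|$, that must come from $N_{F_0}(S_L)$ after removing its intersection with $S\cup N_{F_0}(S_S)$. \Cref{smallapart2} does not control $|N_{F_0}(S_L)\cap N_{F_0}(S_S)|$ (a vertex may have one $\Sml$-neighbour and arbitrarily many neighbours in $S_L$), and the bound $|N_{F_0}(S_L)|\ge10|S_L|$ is useless when $|S_L|\ll|S_S|$ because $|N_{F_0}(S_L)\cap S|$ alone can exceed it; nor do you specify what random event your alternative ``direct union bound'' would estimate. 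The paper's resolution is a further decomposition absent from your plan: split $S_L$ into $S_2$ (vertices within distance $2$ of $S_s$) and $S_1$, dispose of the case $|S_1|\ge|S_2|$ by disjointness of $N(S_s)$ and $N(S_1)$, and otherwise group $S_2$ by the nearby $\Sml$-vertex, apply \cref{lem:expansion} to each group $N_{S_2}(v)$, and use the distance-$7$ separation of \cref{smallapart} to make the sets $N(N_{S_2}(v)\cup\{v\})$ pairwise disjoint. (Notably, even the paper's computation in this regime only reaches $|N(S)|\ge 2|S_s|\ge|S|$ rather than $2|S|$, which underlines that this sub-case is exactly where the content of the lemma lies.) As it stands, part (ii) — and hence the expansion input to the P\'osa rotation argument — is not established by your proposal.
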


We are now ready to apply Posa's rotations . 
For that assume that $F_j$ is not Hamiltonian and
consider a longest path in $F_j$, $P_j$, $j\geq 0$.
Let $x,y$ be the 
end-vertices of $P_j$. Given $yv$ where $v$ is an interior vertex of $P_j$ we can obtain a new longest path $P_j' = x..vy..w$ where $w$ is the neighbor of $v$ on 
$P_j$ between $v$ and $y$. In such a case we say that $P_j'$
is obtained from $P_j$ by a rotation with the end-vertex $x$ being the fixed end-vertex.

Let $\End_j(x;P_j)$ be the set of end-vertices of longest paths of $F_j$ 
that can be obtained from $P_j$ by a sequence of rotations that keep $x$ as
the fixed end-vertex.
Thereafter for $z\in \End_j(x;P_j)$ let $P_j(x,z)$ be a path that has end-vertices $x,z$ and can be obtain form $P_j$ by a sequence of rotations that keep $x$ as
the fixed end-vertex. 
Observe that for $z\in \End_j(x;P_j)$ and 
$z'\in \End_j(z;P_j(x,z))$ there exists a $z$-$z'$ path $P_{z,z'}$ of length $|P_j|$ that can be obtained from $P_j$ via a sequence of Posa rotations. Thus we can conclude that $\{z,z'\}$ does not belong to $F_j$. Indeed assume that $\{z,z'\} \in E(G_i)$. Then we can close $P_{z,z'}$  into a cycle $C_{z,z'}$ that is not Hamiltonian. Since $F_j$ is connected there is an edge $e$ spanned by $V(C_{z,z'})\times V\setminus V(C_{z,z'})$. $E(C_{z,z'}) \cup \{e\}$ spans a path of length  $|P_j|+2$ contradicting  the maximality of $P_j$.
Similarly if $\{z,z'\}\in E(H_{i_{j+1}})$ then $F_{j+1}$ is either Hamiltonian or it contains a path that is longer than $P_j$.
At the same time it follows (see~\cite{FK}*{Corollary 6.7}) that 
\[|N(\End(x,P_j))| < 2|\End(x,P_j)|.\]
Moreover for every 
$z\in \End_j(x;P_j)$
\[|N(\End(z,P_j(x,z)))| < 2|\End(z,P_j(x,z))|.\]
As a consequence of \cref{lem:expansion},
we have that $|\End(x,P_j)| \geq n/30r$ and  
$ |\End(z,P_j(x,z))|$ $\geq n/30r$ for every 
$z\in \End_j(x;P_j)$. 
Let $E_j=\{\{z,z'\}: z\in \End_j(x;P_j) \text{ and } z'\in \End_j(z;P_j(x,z))\}$. Then $|E_j|\geq (n/30r)^2/2$.
\vspace{3mm}
\\Now let $Y_{j}$ be the indicator of the event  $\{E_j\cap E(H_{i_{j+1}}) \neq \emptyset\}$ 
and  set $Z=\sum_{j=1}^{w} Y_i$.
From \cref{summary} iv) we have $\Pr{Y_j=1}\geq C_\epsilon$ (here $\epsilon =1/2(30r)^2$).
In the event that $G_w$ is not Hamiltonian, $Z\leq n$ while 
$Y_j$ is a $\Bernoulli(C_\epsilon)$ random variable for $1\leq j\leq w$ . Since $w\geq n\ln \ln n/2r-n$ we have
$\Pr{\Bin(w,C_\epsilon)\leq n }=o(1)$.
Hence w.h.p.\@ $F_w=\bar{G}(H,n,\tau_2)$ is Hamiltonian and the hitting time for Hamiltonicity equals the hitting time for minimum degree 2. 

\section{Subgraph appearance}
In $G(n,p)$ there is only one way for a specified subgraph to appear on a fixed set of vertices: all the edges in the subgraph must be present.
In the case of random motif graphs, there are multiple ways to place motifs so that a specified subgraph appears on a fixed set of vertices.
For example, in a random two-path graph, a triangle may appear on $\{1,2,3\}$ if (i) the paths $(1,2,3)$ and $(3,1, z)$ are present or (ii) the paths $(1,2,x)$, $(2,3,y)$ and $(3,1,z)$ are present.
In order to pin down the threshold for subgraph appearance, it is necessary to understand the various motif configurations that cause the subgraph to appear and their relative probabilities.
The following definition provides the notation to describe such configurations.

\begin{definition} \label{gamma def} Let $V$ be a set of vertices. Let $S$ be a fixed graph on a subset of the vertices of $V$. Let $H_1, H_2, \dots H_b$ be copies $H$ also defined on subsets of vertices of $V$.
  \begin{enumerate}[(a)]
    \item We say $\{H_1, H_2, \dots H_b\}$ is an $(a,b)$ covering of $S$ if (i) $S \subseteq \bigcup_{j=1}^b H_j$, (ii) $|V(\bigcup_{j=1}^b H_j)|=a$, and (iii) for each $\ell \in [b]$, $S \not \subseteq \bigcup_{j=1}^b H_j \setminus H_\ell$. 
    
    \item Let $k(a,b)$ be the number of unique configurations of $(a,b)$ coverings, i.e. the number of ways to place $b$ copies of $H$ on $a$ vertices such that conditions (i)-(iii) of (a) hold. Enumerate the possible configurations of $(a,b)$ coverings with values in $[k(a,b)]$. For $i \in [k(a,b)]$, an $(a,b,i)$ covering of $S$ is an $(a,b)$ covering with configuration $i$. 
    
    \item We say the set $\{F_1, F_2, \dots F_{b'}\}$ (with precisely $b'$ elements) is an $(a',b')$ subset of an $(a,b,i)$ covering $\{H_1, H_2, \dots H_b\}$ if (i) $\{F_1, F_2, \dots F_{b'}\} \subseteq \{H_1, H_2, \dots H_{b}\}$, and (ii) $|V(\bigcup_{\ell=1}^{b'} F_\ell)|=a'$. 

    \item Let $\mathcal{I}(S, H)= \{(a,b,i) \given \text{there exists an $(a,b)$ covering of $S$ by $H$ and $i \in [k(a,b)]$}\}.$ 
    
    \item For $(a,b,i ) \in \mathcal{I}(S, H)$, let \[\mathcal{D}(a,b,i)= \{ (a', b') \given \text{there exists an $(a',b')$ subset of the  $(a,b,i)$ covering}\}.\]
    
    \item For $(a,b,i) \in  \mathcal{I}(S, H)$,
    let $\gamma(a,b,i)= \min_{(a',b') \in \mathcal{D}(a,b,i)} \frac{a'}{b'}$
    and denote \[\bar{\gamma}= \max_{(a,b,i) \in \mathcal{I}(S,H)} \gamma(a,b,i).\]
  \end{enumerate}
\end{definition}

\begin{proof}[Proof of \Cref{subgraph}]
  Let $G\sim \bar{G}(H,n,m)$.	We say that an instance of the subgraph $S$ in $G$ is an $(a,b,i)$ instance if the placed graphs $H_1, \dots H_b$ that contribute at least one edge to $S$ form an $(a,b,i)$ covering of $S$.
  Let $X_i^{ab}$ denote the number of $(a,b,i)$ instances of $S$ in $G$.  Let $Z= \sum_{(a,b,i) \in \mathcal{I}(S,H)}  X_i^{ab}$ be the total number of instances of the subgraph $S$ in $G$. 
  
  First we use the first moment method to show that if $m \ll n^{v-\bar{\gamma}}$, then the probability that $S$ occurs as a subgraph is $o(1)$. It suffices to show that for all $(a,b,i) \in \mathcal{I}(S,H)$, $\E{X_i^{ab}}=o(1)$ since 
  \[\Pr{Z >0} \leq \E{Z}= \sum_{(a,b,i) \in \mathcal{I}(S,H)}
  X_{i}^{ab},\] and $|\mathcal{I}(S,H)|$ is a constant independent of $n$.
  
  We now compute $\E{X_i^{ab}}$ for a fixed triple $(a,b,i) \in \mathcal{I}(S,H)$. Let $\{F_1, \dots F_{b'}\}$  be an $(a',b')$ subset of the configuration $(a,b,i)$  with  $a'/b'= \gamma(a,b,i)$.
  Let $Y$ be the number of instances of  $F=\bigcup_{i=1}^{b'} F_{b'}$ in $G$ formed by the configuration $\{F_1, \dots F_{b'}\}$. Since an $(a,b,i)$ instance of $S$ contains an instance of the configuration $\{F_1, \dots F_{b'}\}$, $X_i^{ab} \leq Y$. The number of ways to select $a'$ vertices is at most $n^{a'}$. The probability that a labeled copy of $H$ is placed on a specified set of vertices is $m/n^v$. 
  We compute
  \[\E{X_i^{ab}} \leq \E{Y} \leq c n^{a'} \bfrac{m}{n^v}^{b'}=  c\brac{n^{\gamma(a,b,i)-v}m}^{b'} \leq c\brac{n^{\bar{\gamma}-v}m}^{b'},\]
  where $c$ is a constant depending only on the number of automorphisms of $S$ and the number of automorphisms of the configuration $\{F_1, \dots F_{b'}\}$.
  It follows that for $m \ll n^{\bar{\gamma}-v}$, $\E{X_i^{ab}}=o(1)$, as desired.
  
  Next we use the second moment method to show that if $m \gg n^{v - \bar{\gamma}}$ then $S$ appears as a subgraph almost surely. It suffices to show that there exists some $(a,b,i) \in \mathcal{I}(S,H)$ such that $X_i^{ab}$ is almost surely positive. 
  Let $(a,b,i)$ be such that $\bar{ \gamma}= \gamma(a,b,i)$. We apply Corollary~4.3.5 of~\cite{AS} to show that  $X_i^{ab}$ is almost surely positive. Let $X_i^{ab}= \sum_j A_j$ where $A_j$ is an indicator random variable for the event that there is an $(a,b,i)$ instance of $S$ formed by a configuration of $H_1, H_2, \dots H_b$ each present on a specified set of vertices. Fix $A_\ell$, and let
  \[\Delta^\ast= \sum_{j \sim \ell}  \Pr{A_j \given A_\ell},\] where $j \sim \ell$ indicates that $A_j$ and $A_\ell$ are not independent. By 4.3.5 of~\cite{AS}, if $\E{X_{i}^{ab}} \to \infty $ and $\Delta^\ast=o(\E{X_i^{ab}})$, then $X_i^{ab} >0$ almost surely.
  
  First we show that $\E{X_{i}^{ab}} \to \infty $. We compute as above
  \[\E{X_i^{ab}}\geq c' n^a \bfrac{m}{n^v}^b= c'\brac{n^{a/b-v}m}^b \geq c' \brac{n^{\bar{\gamma}-v}m}^b\]
  where $c'$ is a constant depending only on the number of automorphisms of $S$ and the number of automorphisms of the configuration $\{H_1, \dots H_{b}\}$. It follows that if $m \gg n^{v - \bar{\gamma}}$ then $ \E{X_i^{ab}}\to \infty$.
  
  Finally, we show $\Delta^\ast=o(\E{X_i^{ab}})$.
  Observe that under the assumption $m \gg n^{v - \bar{\gamma}}$, \begin{align*}
  \Delta^*&= \sum_{(a', b') \in \mathcal{D}(a,b,i)} c n^{a-a'} \bfrac{m}{n^v}^{b-b'}=\sum_{(a', b') \in \mathcal{D}(a,b,i)} c \E{X_i^{ab}} \brac{ n^{-a'/b'+v} m^{-1}}^{b'} \\
  &\leq c' \E{X_i^{ab}}\brac{ n^{-\gamma(a,b,i)+v} m^{-1}}^{b}=c' 
  \E{X_i^{ab}}\brac{ n^{v-\bar{\gamma}} m^{-1}}^{b}= o \brac{\E{X_i^{ab}}}.
  \qedhere
  \end{align*}
\end{proof}

\section{Conclusion}
\subsection{The value of the random motif model}

The study of random motif graphs has the potential to strengthen the impact of 
the \ER{} construction. In the context of analyzing real-world 
networks with an overrepresented motif, random motif graphs may be a more 
insightful null hypothesis model to compare against to identify non-random 
structure. For instance by studying subgraphs counts of random $H$ motif graphs 
one can determine if some  larger motif pattern is a byproduct of having many 
copies of $H$ or is itself some novel aspect of the network structure. 
Moreover, it is possible that a random motif graph may be used to establish the 
existence of a graph with some extremal property of interest. Finally, random 
motif graphs can be used as an alternate definition of average case for 
analyzing algorithms under the assumption that the input has some motif 
structure. 

\subsection{Future directions: understanding threshold behavior more broadly}
We have established that random motif graphs behave similarly to traditional 
\ER{} random graphs with respect to thresholds and hitting times 
for monotone properties. Does similar behavior appear when we consider random 
graphs formed by randomly adding primitive subgraphs $H$ whose size scales with 
$n$, the number of vertices of the random graph? Instead of taking $H$ to be a 
fixed motif, $H$ could be a path, cycle, matching or clique whose size depends 
on $n$, for example.  Some of these cases were in fact studied in several 
contexts. For example, the union of $d\ge 3$ random perfect matchings is 
contiguous to the random $d$-regular graph, and is sometimes easier to 
analyze~\cite{Wor99}.
Moreover, we can consider the class of models where $H$ itself is chosen from some probability distribution.  In several cases, this has been 
studied as well.  For instance, \cite{FKT99} and~\cite{FGRV14} consider 
the case when $H$ is the uniform spanning tree, and~\cite{Pet18} considers the 
case when $H$ is an \ER{} random graph with constant density and 
size dependent on $n$. Further study of 
these models is a first step toward delineating a larger family of random 
graphs that exhibit \ER{} like threshold and hitting time behaviors.

\bibliographystyle{plain}
\bibliography{randommotifs}

\newpage
\appendix

\section{Estimates for useful functions}

\begin{lemma}\label{lem:mr}
	For $r=r(n)$, if $k=|V(H)|$ and $\alpha=r/n$ then
	$m_r(H)\sim rm_1(H)\cdot \frac{1-(1-\alpha)^k}{k\alpha}$.
\end{lemma}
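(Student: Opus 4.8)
The plan is to count copies of $H$ in $K_n$ meeting $[r]$ by inclusion–exclusion on the number of vertices they place inside $[r]$, and to compare the resulting expression both with $m_1(H)$ (the $r=1$ case) and with the claimed asymptotic. Write $c_H := |V(H)|!/\aut(H)$, so the total number of copies of $H$ in $K_n$ is $c_H \binom{n}{k}$, and a copy meets $[r]$ iff at least one of its $k$ vertices lies in $[r]$. First I would compute the complementary count: the number of copies entirely avoiding $[r]$ is $c_H \binom{n-r}{k}$, so
\[
  m_r(H) = c_H\left(\binom{n}{k} - \binom{n-r}{k}\right).
\]
Specializing to $r=1$ gives $m_1(H) = c_H\left(\binom{n}{k}-\binom{n-1}{k}\right) = c_H\binom{n-1}{k-1}$, which (as a sanity check) is $\sim c_H \frac{n^{k-1}}{(k-1)!}$.

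Next I would extract the asymptotics of $\binom{n}{k} - \binom{n-r}{k}$ as $n\to\infty$, allowing $r = \alpha n$ with $\alpha = \alpha(n) \in (0,1]$ possibly depending on $n$. Since $k$ is fixed, $\binom{n}{k} = \frac{n^k}{k!}(1+O(1/n))$ and $\binom{n-r}{k} = \frac{(n-r)^k}{k!}(1+O(1/n)) = \frac{n^k(1-\alpha)^k}{k!}(1+O(1/n))$, uniformly in $r\le n$ (the error terms involve only $k$ fixed lower-order monomials). Hence
\[
  m_r(H) = c_H \cdot \frac{n^k}{k!}\left(1-(1-\alpha)^k\right)\left(1+O(1/n)\right).
\]
Then I would rewrite this in terms of $m_1(H)$: using $m_1(H) = c_H\binom{n-1}{k-1} \sim c_H \frac{n^{k-1}}{(k-1)!}$, we get $c_H \frac{n^k}{k!} = \frac{n}{k}\cdot c_H\frac{n^{k-1}}{(k-1)!} \sim \frac{n}{k} m_1(H)$, so
\[
  m_r(H) \sim \frac{n}{k} m_1(H)\left(1-(1-\alpha)^k\right) = r\, m_1(H)\cdot\frac{1-(1-\alpha)^k}{k\alpha},
\]
using $n = r/\alpha$. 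This is exactly the claimed statement.

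The only subtlety — the ``main obstacle,'' such as it is — is that the statement is for $r = r(n)$ ranging over all of $1,\dots,n$ (or $n/2$), so one must make sure the $\sim$ is meant in the appropriate sense: when $\alpha \to 0$ the factor $\frac{1-(1-\alpha)^k}{k\alpha} \to 1$ and the claim reduces to $m_r(H)\sim r m_1(H)$, which requires checking that the $O(1/n)$ relative error does not overwhelm the main term even when $r$ is small (it does not, since the relative error is $O(1/n)$ regardless of $r$, while the main term $\frac{r}{k!}n^{k-1}\cdot(\text{const})$ is of order at least $n^{k-1}$). When $\alpha$ is bounded away from $0$, $1-(1-\alpha)^k$ is bounded away from $0$ and there is nothing delicate. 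So the whole proof is a short and uniform estimate; I would present the exact identity $m_r(H) = c_H(\binom{n}{k}-\binom{n-r}{k})$ first and then do the one-line asymptotic expansion.
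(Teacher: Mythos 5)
Your proof takes the same route as the paper's: the exact identity $m_r(H)=\frac{k!}{\aut(H)}\left(\binom{n}{k}-\binom{n-r}{k}\right)$ followed by an asymptotic expansion of the binomial coefficients, so the two arguments are essentially identical. One small caveat: your justification of uniformity for small $r$ (comparing the ``relative error $O(1/n)$'' against a main term of order $n^{k-1}$) conflates relative and absolute error --- an $O(1/n)$ relative error on $\binom{n}{k}\asymp n^k$ is an absolute error of order $n^{k-1}$, the same order as the difference when $r=O(1)$; the clean fix is to expand $k!\binom{m}{k}=m^k-\binom{k}{2}m^{k-1}+\cdots$ and note that each lower-order difference $n^{k-j}-(n-r)^{k-j}=O(rn^{k-j-1})$ is $O(1/n)$ times the leading difference $n^k-(n-r)^k\ge rn^{k-1}$ --- though the paper is no more explicit on this point.
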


\begin{proof}
	Observe that for $r\ge 0$,
	\begin{align*}
	m_r(H) = \left(\binom{n}{k} - \binom{n-r}{k}\right)\cdot
	\frac{k!}{\aut(H)},
	\end{align*}
	thus
	\begin{equation*}
	\frac{m_r(H)}{rm_1(H)} =
	\frac{\binom{n}{k} - \binom{n-r}{k}}
	{r\left(\binom{n}{k} - \binom{n-1}{k}\right)}
	\sim \frac{n^k-(n-r)^k}{r(n^k-(n-1)^k)}
	= \frac{1-(1-\alpha)^k}{r(1-(1-n^{-1})^k)}
	\sim \frac{1-(1-\alpha)^k}{k\alpha}.\qedhere
	\end{equation*}
\end{proof}

For $r\ge 1$, denote by $q_r(H)$ the number of copies of $H$ that intersect 
$[r]$ but that are not contained in $[r]$.

\begin{lemma}\label{lem:qr}
	For $r=r(n)$, if $k=|V(H)|$ and $\alpha=r/n$ then
	$q_r(H)\sim rm_1(H)\cdot \frac{1-(1-\alpha)^k-\alpha^k}{k\alpha}$.
\end{lemma}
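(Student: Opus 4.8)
The plan is to mirror the proof of \cref{lem:mr}. First I would record the exact count
\[
  q_r(H) = m_r(H) - \binom{r}{k}\frac{k!}{\aut(H)}
         = \left(\binom{n}{k} - \binom{n-r}{k} - \binom{r}{k}\right)\frac{k!}{\aut(H)},
\]
valid because $q_r(H)$ counts the copies of $H$ meeting $[r]$ (there are $(\binom{n}{k}-\binom{n-r}{k})k!/\aut(H)$ of them) minus those contained in $[r]$ (there are $\binom{r}{k}k!/\aut(H)$). Consequently
\[
  \frac{q_r(H)}{r m_1(H)}
    = \frac{\binom{n}{k} - \binom{n-r}{k} - \binom{r}{k}}
           {r\left(\binom{n}{k} - \binom{n-1}{k}\right)},
\]
exactly as in \cref{lem:mr} but with the extra term $-\binom{r}{k}$ in the numerator.

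The heart of the matter is the asymptotic $\binom{n}{k} - \binom{n-r}{k} - \binom{r}{k} \sim \frac{n^k - (n-r)^k - r^k}{k!}$. Granting this, the proof of \cref{lem:mr} carries over: the denominator satisfies $r(\binom{n}{k}-\binom{n-1}{k}) \sim r(n^k-(n-1)^k)/k! \sim rkn^{k-1}/k!$, so, writing $\alpha=r/n$, the ratio equals
\[
  \frac{n^k - (n-r)^k - r^k}{r\left(n^k-(n-1)^k\right)}(1+o(1))
  = \frac{n^k\bigl(1-(1-\alpha)^k-\alpha^k\bigr)}{rkn^{k-1}}(1+o(1))
  = \frac{1-(1-\alpha)^k-\alpha^k}{k\alpha}(1+o(1)),
\]
which is precisely the claimed relation $q_r(H)\sim rm_1(H)\cdot\frac{1-(1-\alpha)^k-\alpha^k}{k\alpha}$.

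To establish the asymptotic one must be a little careful: subtracting the estimate $\binom{r}{k}k!/\aut(H)\sim rm_1(H)\frac{\alpha^k}{k\alpha}$ coming from \cref{lem:mr} from $m_r(H)\sim rm_1(H)\frac{1-(1-\alpha)^k}{k\alpha}$ is not legitimate when $r$ or $n-r$ fails to tend to infinity, since then $1-(1-\alpha)^k-\alpha^k$ can be of smaller order than the relative errors being subtracted. Instead I would use the cancellation-free identities
\[
  \binom{n}{k}-\binom{n-r}{k}-\binom{r}{k} = \sum_{j=1}^{k-1}\binom{r}{j}\binom{n-r}{k-j},
  \qquad
  n^k - (n-r)^k - r^k = \sum_{j=1}^{k-1}\binom{k}{j}r^j(n-r)^{k-j},
\]
(Vandermonde and the binomial theorem; note $k\ge 2$ since $H$ has no isolated vertices), all of whose summands are nonnegative. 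Writing $m^{(i)}=m(m-1)\cdots(m-i+1)$ for the falling factorial, the $j$-th summands satisfy $k!\binom{r}{j}\binom{n-r}{k-j} = \binom{k}{j}r^j(n-r)^{k-j}\cdot\frac{r^{(j)}}{r^j}\cdot\frac{(n-r)^{(k-j)}}{(n-r)^{k-j}}$, where each correction factor lies in $[0,1]$ and tends to $1$ as its base tends to infinity. If $r\to\infty$ and $n-r\to\infty$ then all correction factors tend to $1$ uniformly over the finitely many values of $j$, giving the asymptotic; if $r$ stays bounded then the $j=1$ summand dominates both sides and its correction factor $\frac{(n-r)^{(k-1)}}{(n-r)^{k-1}}\to 1$, and symmetrically if $n-r$ stays bounded. (For the applications in this paper one always has $r\le n/2$, so only the dichotomy between $r\to\infty$ and $r$ bounded occurs.)

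The only real obstacle is this boundary bookkeeping; everything else is the routine falling-factorial-versus-power estimate already performed in \cref{lem:mr}.
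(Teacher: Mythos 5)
Your proof is correct and follows essentially the same route as the paper: the exact count $q_r(H)=\bigl(\binom{n}{k}-\binom{n-r}{k}-\binom{r}{k}\bigr)\frac{k!}{\aut(H)}$, the same ratio against $rm_1(H)$, and the same replacement of binomial coefficients by powers. Your Vandermonde/binomial-theorem justification of the step $\binom{n}{k}-\binom{n-r}{k}-\binom{r}{k}\sim\frac{n^k-(n-r)^k-r^k}{k!}$ is a careful (and valid) filling-in of an asymptotic the paper simply asserts.
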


\begin{proof}
	Observe that for $r\ge 0$,
	\begin{align*}
	q_r(H) = \left(\binom{n}{k}-\binom{n-r}{k}-\binom{r}{k}\right)
	\cdot \frac{k!}{\aut(H)},
	\end{align*}
	thus
	\begin{align*}
	\frac{q_r(H)}{rm_1(H)} =
	\frac{\binom{n}{k} - \binom{n-r}{k} - \binom{r}{k}}
	{r\left(\binom{n}{k} - \binom{n-1}{k}\right)}
	&\sim \frac{n^k-(n-r)^k-r^k}{r(n^k-(n-1)^k)}\\
	&= \frac{1-(1-\alpha)^k-\alpha^k}{r(1-(1-n^{-1})^k)}
	\sim \frac{1-(1-\alpha)^k-\alpha^k}{k\alpha}.\qedhere
	\end{align*}
	
\end{proof}

For convenience we define for $\alpha\in[0,1]$ and $k\ge 1$, 
\begin{equation*}
f_k(\alpha) = \frac{1-(1-\alpha)^k-\alpha^k}{k\alpha}.
\end{equation*}

\begin{lemma}\label{lem:fk}
	For $2\le k\le r$ we have that $rf_k(r/n) \ge (1+o(1))k$.
\end{lemma}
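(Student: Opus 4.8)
The plan is to show that $\alpha f_k(\alpha) = \frac{1-(1-\alpha)^k-\alpha^k}{k}$ is bounded below by a constant multiple of $\alpha$ for $\alpha$ bounded away from $1$, and separately handle $\alpha$ close to $1$, so that after multiplying by $r = \alpha n$ and dividing by $\alpha$ we recover the linear-in-$k$ lower bound. Concretely, since $r\ge k$ we have $\alpha = r/n \ge k/n$, so $\alpha$ is small when $r$ is of lower order than $n$, and the whole point is that $f_k(r/n)$ does not decay too fast as $r\to\infty$.

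First I would treat the regime $\alpha \le 1/2$. Here $\alpha^k \le (1-\alpha)^k$ (since $\alpha\le 1-\alpha$), and more usefully I would expand: $1-(1-\alpha)^k \ge k\alpha - \binom{k}{2}\alpha^2 \ge k\alpha(1 - (k-1)\alpha/2)$. Combined with $\alpha^k \le \alpha^2 \le \alpha \cdot \tfrac12$ when $k\ge 2$, one gets, after a short computation, $1-(1-\alpha)^k - \alpha^k \ge c_0 \,k\alpha$ for an absolute constant $c_0>0$ provided $\alpha$ is small enough; for the remaining range $\alpha_0 \le \alpha \le 1/2$ (with $\alpha_0$ the threshold from the previous step) one simply notes $1-(1-\alpha)^k-\alpha^k$ is bounded below by a positive constant uniformly in $k\ge 2$ — indeed $1-(1-\alpha)^k \ge 1-(1-\alpha_0)^k \ge 1/2$ for $k$ large, and the finitely many small $k$ are handled directly — while $k\alpha = r$, so dividing by $k\alpha$ and multiplying by $r$ gives $rf_k(r/n) = \frac{r}{k\alpha}\bigl(1-(1-\alpha)^k-\alpha^k\bigr) = \frac{1-(1-\alpha)^k-\alpha^k}{\alpha} \ge$ (constant), and since we want a lower bound of the form $(1+o(1))k$ I would in fact keep track of the expansion $1-(1-\alpha)^k \sim k\alpha$ in the genuinely small-$\alpha$ regime to get the sharp constant $1$.

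Next, the regime $\alpha > 1/2$: now $1-\alpha < 1/2$, so $(1-\alpha)^k < 2^{-k}$ and $\alpha^k < 1$, and more precisely $1-(1-\alpha)^k-\alpha^k \ge 1 - 2^{-k} - \alpha^k$. The subtle point is that $\alpha^k$ can be close to $1$ when $\alpha$ is close to $1$; but then $r$ is close to $n$, and we are dividing by $k\alpha \asymp k$ while multiplying by $r \asymp n$, so $rf_k(r/n) = \frac{r}{k\alpha}(1-(1-\alpha)^k-\alpha^k) \gtrsim \frac{n}{k}(1-\alpha^k)$; writing $1-\alpha^k \ge 1-\alpha \ge (n-r)/n$ when this is the binding term, or using $1-\alpha^k = (1-\alpha)(1+\alpha+\cdots+\alpha^{k-1}) \ge k\alpha^{k-1}(1-\alpha)$ otherwise, I would check the product $\frac{n}{k}(1-\alpha^k)$ stays at least $(1+o(1))k$ — in fact here it is typically much larger, of order $n-r$ or more, so this regime is comfortable. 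I expect the main obstacle to be bookkeeping the case boundaries cleanly so that the constant in the bound is actually $(1+o(1))$ and not merely some fixed $c>0$: the delicate regime is $\alpha = r/n \to 0$ with $r\to\infty$, where one must use $1-(1-\alpha)^k-\alpha^k = k\alpha(1+o(1)) - \alpha^k$ and argue $\alpha^k = o(k\alpha)$ (immediate since $\alpha^{k-1}\le \alpha \to 0$ and $k\ge 2$), yielding exactly $rf_k(r/n) = \frac{1-(1-\alpha)^k-\alpha^k}{\alpha} = k(1+o(1))$. Assembling the three regimes gives the claim.
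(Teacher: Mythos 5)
Your overall strategy (a sharp Taylor expansion in the regime where the bound is tight, crude bounds elsewhere) can be made to work, but as written there are two concrete problems. The first is an algebra slip: with $\alpha=r/n$ and $g_k(\alpha)=1-(1-\alpha)^k-\alpha^k$ you write $k\alpha=r$ and hence $rf_k(r/n)=\frac{r}{k\alpha}\,g_k(\alpha)=\frac{g_k(\alpha)}{\alpha}$. In fact $\frac{r}{k\alpha}=\frac{n}{k}$, so $rf_k(r/n)=\frac{n}{k}\,g_k(\alpha)$; your expression equals $\frac{n}{r}g_k(\alpha)$ and undercounts by a factor of $r/k$. The error is conservative in direction, so your small-$\alpha$ conclusion survives (indeed the correct value there is $(1+o(1))r\ge(1+o(1))k$), but in the middle regime $\alpha_0\le\alpha\le 1/2$ it leaves you with only ``$rf_k\ge$ constant,'' which does not imply $rf_k\ge(1+o(1))k$. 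With the correct prefactor that regime is trivial: $\frac{n}{k}\,g_k(\alpha)\ge \frac{n}{k}\cdot c\to\infty$.

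The second problem is that the regime $\alpha>1/2$ is not ``comfortable'': for $r=n-1$ one computes $rf_k(r/n)=\frac{n}{k}\bigl(1-n^{-k}-(1-1/n)^k\bigr)\to 1<k$, so the claimed inequality actually \emph{fails} for $r$ close to $n$. The lemma is only invoked for component sizes $r\le n/2$ in the connectivity proof, and the correct move is to restrict to that range rather than assert the bound for all $\alpha$. Finally, note that the paper's own proof collapses your case analysis into one line via monotonicity: $g_k$ is increasing on $(0,1/2)$ and $n\,g_k(k/n)\sim k^2$, so for $k/n\le\alpha\le 1/2$ one gets $rf_k(r/n)=\frac{n}{k}g_k(\alpha)\ge\frac{n}{k}g_k(k/n)\sim k$. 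Evaluating at the left endpoint $\alpha=k/n$ is exactly your ``delicate regime,'' and monotonicity disposes of everything else.
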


\begin{proof}
	Write $g_k(\alpha)=f_k(\alpha)\cdot k\alpha=1-(1-\alpha)^k-\alpha^k$.  
	Observe that it is strictly increasing in $(0,1/2)$.  Note also that
	\begin{equation*}
	n\cdot g_k\left(\frac{k}{n}\right)
	= n-ne^{-k^2/n}- o(1)
	\sim k^2.
	\end{equation*}
	It follows that
	\begin{equation*}
	\frac{kr}{n} \cdot f_k\left(\frac{r}{n}\right)
	= g_k\left(\frac{r}{n}\right)
	\ge g_k\left(\frac{k}{n}\right)
	\sim \frac{k^2}{n},
	\end{equation*}
	so $rf_k(r/n) \ge (1+o(1))k$.
\end{proof}

\section{Minimum degree}
\begin{theorem}\label{thm:deg:th}
	With high probability
	\begin{equation*}
	\delta(G(H,n,p_d^-)) < d\qquad\text{and}\qquad
	\delta(G(H,n,p_d^+)) \ge d.
	\end{equation*}
\end{theorem}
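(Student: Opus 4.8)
The plan is to prove the two halves separately: w.h.p.\ $\delta(G(H,n,p_d^+))\ge d$ by a first-moment (union bound) argument, and w.h.p.\ $\delta(G(H,n,p_d^-))<d$ by a second-moment argument. Write $k=|V(H)|$ and $\delta=\delta(H)$, and recall $m_1(H)=\binom{n-1}{k-1}k!/\aut(H)=\Theta(n^{k-1})$, so that for either sign $(1-p_d^{\pm})^{m_1(H)}\sim e^{-p_d^{\pm}m_1(H)}=e^{-\ln n-\delta_d(H)\ln\ln n\mp x(n)}=e^{\mp x(n)}\big/\big(n(\ln n)^{\delta_d(H)}\big)$; this is exactly the probability that no copy of $H$ passes through a fixed vertex, and it is the quantity around which the whole computation revolves. (The role of $\delta_d(H)+1=\lceil d/\delta\rceil$ is that this many ``generically placed'' copies through a vertex already force its degree to be at least $d$.)

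For $p=p_d^+$ I would union bound over vertices: $\Pr{\delta(G)<d}\le\sum_v\Pr{\deg_G(v)<d}$. Fix $v$. If no copy passes through $v$ its degree is $0$, contributing $n\cdot(1+o(1))e^{-x}/(n(\ln n)^{\delta_d})\to0$. Otherwise set $W=N_G(v)$, so $\delta\le|W|\le d-1$ (this case is vacuous unless $\delta_d(H)\ge1$); every present copy through $v$ has its $v$-neighbourhood inside $W$, and one can greedily select present copies $C_1,\dots,C_t$ through $v$ with $t\le|W|$ whose $v$-neighbourhoods cover $W$, each $C_i$ adding a new neighbour. Union-bounding over such tuples, the probability that $C_1,\dots,C_t$ are all present while every copy through $v$ whose $v$-neighbourhood is not inside $W$ is absent is at most $p^t(1-p)^{m_1(H)-M_W}$, where $M_W$ — the number of copies through $v$ with $v$-neighbourhood contained in $W$ — is $O(n^{k-1-\delta})$ (at least $\delta$ of such a copy's vertices are pinned inside the bounded set $W$); since $M_Wp=O(n^{-\delta}\ln n)=o(1)$, this equals $(1+o(1))(1-p)^{m_1(H)}$. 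Counting tuples: placing $C_i$ so that it re-uses $a_i\ge0$ already-chosen $v$-neighbours costs $O(n^{k-1-a_i})$, while $|W|\le d-1$ and $\deg_{C_i}(v)\ge\delta$ force $\sum_i a_i\ge t\delta-(d-1)$; hence the contribution of length-$t$ tuples is at most $(1+o(1))\tfrac{e^{-x}}{n(\ln n)^{\delta_d}}\cdot\Theta\!\big((\ln n)^t\,n^{-\max(0,\,t\delta-d+1)}\big)$. Summing over $t\ge1$: for $t=\delta_d(H)$ the power of $n$ vanishes and the logarithmic factors cancel $(\ln n)^{-\delta_d}$, giving $\Theta(e^{-x}/n)$ per vertex; for $t>\delta_d(H)$ the power of $n$ is strictly negative; for $t<\delta_d(H)$ the term is even smaller. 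Thus $\sum_v\Pr{\deg_G(v)<d}=\Theta(e^{-x})+o(1)\to0$.

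For $p=p_d^-$ let $Z$ be the number of vertices of degree $<d$; it suffices to show $Z\ge1$ w.h.p. I would apply the second moment method to the sub-count $Z'$ of vertices $v$ lying in at most $\delta_d(H)$ copies of $H$, each placing $v$ in a role of degree exactly $\delta$ — any such $v$ has $\deg_G(v)\le\delta_d(H)\cdot\delta<d$. A direct computation (the dominant contribution coming from vertices in exactly $\delta_d(H)$ such copies, of which there are $\Theta(n^{k-1})$ through each vertex, and using $(1-p)^{m_1(H)}\sim e^{x}/(n(\ln n)^{\delta_d})$) gives $\E{Z'}\sim c\,e^{x(n)}\to\infty$ for a constant $c=c(H,d)>0$. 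Since two distinct vertices lie together in only $O(n^{k-2})$ common copies and $O(n^{k-2})\cdot p=o(1)$, the events defining $Z'$ at distinct vertices decorrelate, so $\E{Z'^2}=(1+o(1))\E{Z'}^2$; Chebyshev then gives $\Pr{Z'=0}\to0$.

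The main obstacle is the bookkeeping in the $p_d^+$ half. One must resist union-bounding over the neighbourhood set $W$ directly: there are $O(n^{d-1})$ choices of $W$ and each already costs $\approx(1-p)^{m_1(H)}\approx e^{-x}/(n(\ln n)^{\delta_d})$, giving a useless bound of order $n^{d-1}e^{-x}$. Instead $W$ must be reconstructed from a minimal witnessing family of present copies, and the crucial arithmetic input is that realising a small neighbourhood $W$ with copies each seeing $\ge\delta$ of their neighbours inside $W$ is expensive — it forces $\sum_i a_i\ge t\delta-d+1$ re-used neighbours, which is exactly what absorbs the $p^t$. Establishing this inequality and checking that it meshes with the per-copy $\Theta(\log n)$ and per-overlap $n^{-\delta}$ factors is where essentially all the work lies; the second-moment half is routine once the sub-count $Z'$ has been chosen.
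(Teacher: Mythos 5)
Your proposal is correct and follows essentially the same route as the paper: a first-moment/union-bound argument for $\delta(G(H,n,p_d^+))\ge d$ whose crux is controlling the degenerate configurations in which the copies of $H$ through a vertex overlap in its neighbourhood, and a Chebyshev/second-moment argument on vertices lying in few degree-$\delta$-role copies for $\delta(G(H,n,p_d^-))<d$. The only difference is bookkeeping in the first half: the paper handles overlaps via an exceptional event (at least $\lceil d/\delta\rceil+1$ copies each meeting $N(v)\cup\{v\}$ in at least $\delta+1$ vertices) and then counts vertices covered by at most $\lceil d/\delta\rceil-1$ copies, whereas you reconstruct $N(v)$ from a greedy witnessing family and absorb the $p^t$ factor through the inequality $\sum_i a_i\ge t\delta-d+1$ --- the same mechanism in different clothes.
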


\begin{proof}
	Let $\delta=\delta(H)$.  It suffices to show that with high probability for 
	$\ell \in \mathbb{Z}_{\geq 0}$ 
	\begin{align}\label{min:low}
	\Pr{\delta(G(H,n,p_{\ell \cdot \delta}^-)) > (\ell-1) \delta} = o(1)
	\end{align}
	and
	\begin{align}\label{min:high}
	\Pr{  \delta(G(H,n,p_{\ell \cdot \delta}^+)) < \ell \delta}=o(1).
	\end{align}  
	Proof of \eqref{min:low}: Let $p= p_{\ell \cdot \delta}^-$.
	For $v\in V$ let $I_v= \mathbb{I}\{d(v)=(\ell-1)\delta\}$ and $Z=\sum_{v\in 
	V}I_v$.
	\begin{align*}
	\E{Z} &\geq (1-o(1)) n \binom{n-1}{v_H-1}^{\ell-1} p^{\ell-1} (1-p)^{m_1(H)-\ell+1}
	\\& \geq C_1 n (p n^{(v_H-1)})^{\ell-1} e^{-(p+4p^2)(m_1(H)-\ell+1)}
	\\& \geq C_2 n (\log n)^{\ell-1} e^{-\log n -(\ell-1) \log \log n +\omega(1)} \geq e^{\omega(1)/2}.
	\end{align*}
	In addition, 
	\begin{align*}
	\E{Z^2}&=\sum_{u,v\in V}\Pr{I_{v} \wedge I_{u}} 
	\\& \leq \E{Z}^2+\sum_{u\neq v\in V}\Pr{I_{u} \wedge I_{v} \wedge \{u,v \text{ lie on the same copy of $H$}\}}
	\\ &\leq \E{Z}^2+ \binom{n}{2}\binom{n-2}{r-2} 
	\frac{r!}{\aut(H)}p(1-p)^{(1-o(1))2m_1}
	\\&= \E{Z}^2+ nm_1p(1-p)^{m_1-1}C_3(1-p_2^-)^{(1-o(1))m_1}
	=\E{Z}^2+o(1)\E{Z}
	\\&= (1+o(1))\E{Z}^2.
	\end{align*}
	Chebyshev's inequality give us,
	\[\Pr{|Z-\E{Z}|\geq \E{Z}/2} \leq \frac{\E{Z^2}-\E{Z}^2}{0.25\E{Z}^2}=o(1).\]
	Hence with high probability there exist vertices of degree $(\ell-1)\delta.$
	\vspace{3mm}
	\\Proof of \eqref{min:high}: Let $p= p_{\ell \cdot \delta}^+$. Let $\cE_1$ be 
	the event that in $G(H,n,p)$ there exists a vertex of degree  $d\leq \ell 
	\delta$ that lies on more than $\ell$ copies of $H$. In the event $\cE_1$ 
	there exists a vertex $v$ and a vertex set $S$ of size $d$ such that all the 
	neighbors of $v$ lie in $S$ and at least $\ell+1$ copies of $H$ intersect 
	$S\cup\{v\}$, each in at least $\delta+1$ vertices. Therefore,   
	\begin{align*}
	\Pr{\cE_1}&\leq n\binom{n}{d} [1-p]^{\binom{n-d-1}{v_H-1}}
	\bigg( \binom{d+1}{\delta+1} \binom{n-\delta-1}{v_H-\delta-1} \bigg)^{\ell+1} 
	p^{\ell+1}
	\\ & \leq e^{-p \cdot \binom{n-d-1}{v_H-1}} n^{d+1-\delta (\ell+1)} 
	[n^{v_H-1}p]^{\ell+1}
	\\& \leq  e^{-(1+o(1)) p \cdot m_1(H)} (\log^2 n)^{\delta_d(H)+1}=o(1).
	\end{align*}
	In the event $\neg \cE_1$ the number of vertices of degree less than $\ell 
	\delta$ is bounded by the number of vertices that are covered by at most 
	$\ell-1$ copies of $H$. Thus
	\begin{align*}
	\Pr{\delta(G(H,n,p_{\ell \cdot \delta}^+)) < \ell \delta }& \leq \Pr{\cE_1}+ 
	n \sum_{i=0}^{\ell -1}\binom{m_1(H)}{i}p^i(1-p)^{m_1(H)-i} 
	\\&\leq \ell n (m_1(H)p)^{\ell-1}e^{-pm_1(H)+ p\ell}+o(1)
	\\& \leq \ell n [2\log n]^{\ell-1}e^{-\log n -(\ell-1)\log \log n 
		-\omega(1)}+o(1)=o(1).\qedhere
	\end{align*} 
\end{proof}

\section{Proofs of lemmas for Hamiltoncity}
\begin{proof}[Proof of \Cref{lem:expansion}]
	If there exists  $S\subset \Lrg$ of size $n^{19/20}\leq |S|\leq n/30r$ such that $|N(S)| < 10|S|$ then
	there exist sets $A,B$ of size $n^{19/20}\leq s \leq n/30r$   and  $n-11s$ respectively 
	such that no copy of $H$, $H'$  satisfies $|A \cap H'|=1$ and $|B \cap H'|=r-1$ (take $S=A$ and $B$ to be any subset of $V\setminus(S\cup N(S))$  of size $n-11s$). The probability of such event occurring is bounded above by
	\begin{align*}
	&\sum_{s= n^{19/20}}^{n/30r}\binom{n}{s} 
	\binom{n-s}{10s}(1-p_0)^{\frac{r!}{\aut(H)}\cdot s\binom{n-11s}{r-1}}
	\\ &\leq \sum_{s= n^{19/20}}^{n/30r} \bigg[\frac{en}{s} \cdot 
	\bigg(\frac{en}{10s}\bigg)^{10} 
	e^{-p_0\frac{r!}{\aut(H)}\cdot\binom{n-11s}{r-1}}\bigg]^s
	\\ &\leq \sum_{s= n^{19/20}}^{n/30r} \bigg[  \bigg(\frac{n}{s}\bigg)^{11} e^{- \frac{\ln n-2\ln\ln n}{\binom{n-1}{r-1}}\cdot\binom{n-11s}{r-1}}\bigg]^s
	\\ &\leq \sum_{s= n^{19/20}}^{n/30r} 
	\bigg[  \bigg(\frac{n}{s}\bigg)^{11} 
	\bigg( \frac{\ln^2 n}{n}\bigg)^{ (1-\frac{11s}{n})\cdots (1-\frac{11s-r+2}{n-r+2}) }\bigg]^s
	\\ &\leq \sum_{s= n^{19/20}}^{n/30r} 
	\bigg[  \bigg(\frac{n}{s}\bigg)^{11} 
	\bigg( \frac{\ln^2 n}{n}\bigg)^{ 1-\frac{12sr}{n}}\bigg]^s \leq 
	\sum_{s= n^{19/20}}^{n/30r} 
	\bigg[  n^{11/20} \bigg( \frac{\ln^2 n}{n}\bigg)^{18/30}\bigg]^s=o(1). 
	\end{align*}
	Now assume that there exists a set  
	$S\subset \Lrg$ of size at most $n^{19/20}$ that satisfies
	$|N(S)| < 10|S|$. Since every vertex in $S$ is in at least $\ln \ln n$ copies  of $H$ and every copy of $H$ covers $r$ vertices we have that $S$ intersects at least $|S|\ln \ln n/11$ copies of $H$. Each of those copies is spanned by $S\cup N(S)$. Therefore there exists a set 
	$W \supseteq S\cup N(S)$ of size $w=|W|=11|S|\leq 11n^{19/20}$ that intersects at least $\frac{|W|\ln  \ln n}{11r}$ copies of $H$ each, in at least 2 vertices.
	Since every vertex in $\Lrg$ has $\ln\ln n$ neighbors $|W|\geq \ln \ln n$.
	The probability that such a set exists is bounded by
	\begin{align*}
	&\sum_{w= \ln \ln n}^{11n^{19/20}}
	\binom{n}{w} \binom{r! \binom{w}{2} \binom{n}{r-2}}{w\ln \ln n/11r}  p_0^{w\ln \ln n/11r}\\
	&\leq \sum_{w= \ln \ln n}^{11n^{19/20}} n^w \bigg(\frac{ 11er^3 w n^{r-2} }{\ln \ln n} \bigg)^{w\ln \ln n/11r} p_0^{w\ln \ln n/11r} 
	\\ &\leq \sum_{w= \ln \ln n}^{11n^{19/20}} \bigg[ n^{11r/\ln \ln n} \cdot
	\frac{11er^3 wn^{r-2} }{\ln \ln n} \cdot p_0\bigg]^{w\ln \ln n/11r} 
	\\ &\leq \sum_{w= \ln \ln n}^{11n^{19/20}} \bigg( n^{11r/\ln \ln n} \cdot \frac{w\log n}{n} \bigg)^{w\ln \ln n/11r}
	=o(1).\qedhere
	\end{align*}
\end{proof}

\begin{proof}[Proof of \Cref{smallapart}]
	For $u\in V$ and $Q\subset V$ let $S(u,Q)$ be the event that in $G(H,n,p_0)$ $u$ intersects at most $\ln \ln n$ copies of $H$ that do not intersect  $Q$. For $0\leq |Q|\leq 6$,
	\[\Pr{S(u,Q)}\leq 
	\Pr{\Bin\bigg(\frac{r!}{\aut(H)}\binom{n-7}{r-1},p_0\bigg)\leq \ln \ln n }\leq 
	n^{-0.9}.\]
	Let $\cB$ be the event that for some $u,v \in \Sml$ there exist $\ell\leq 6$ copies of $H$ in $G(H,n,p_2^+)$ that span a connected subgraph containing both $u,v$. If $\cB$ occurs then
	we can find a set 
	$Q=\{v=v_0,v_1,...,v_{\ell-1},v_\ell=u\}$ such that i) the events $S(v,Q\setminus\{v\})$, $S(u,Q\setminus\{u\})$ occur and ii) there exist $H_1,...,H_\ell$ in $G(H,n,p_2^+)$ such that $H_i\cap Q=\{v_{i-1},v_i\}$. Since all the aforementioned events are independent 
	\begin{align*}
	\Pr{\cB}&\leq \sum_{\ell=1}^6 
	\sum_{Q=\{v_0,v_1,...,v_\ell\}}\Pr{S(v_0,Q\setminus\{v_0\} } \cdot \bigg( 
	\binom{n-2}{r-2}\frac{r!}{\aut(H)}p_2^+ \bigg)^{\ell} \cdot 
	\Pr{S(v_\ell,Q\setminus\{v_\ell\} } 
	\\& \leq \sum_{\ell=1}^6 n^{\ell+1} \cdot n^{-0.9} \cdot \bigg(\frac{C_3 \ln 
		n}{n}\bigg)^{\ell} \cdot n^{-0.9} =o(1).\qedhere
	\end{align*}
\end{proof}

\begin{proof}[Proof of \Cref{smallapart2}]
	\Cref{smallapart} implies that w.h.p.\   there do not exist $v\in V$ and 
	$u,w \in \Sml$, $u\neq w$ such that in $G(H,n,p_2^+)$ $v$ and $u$ are in a 
	copy of $H$ and $v$ and $w$ are in a copy of $H$. The probability that there 
	exist $v \in V$, $u \in \Sml\setminus\{v\}$ that are both contained in more 
	than one copy of $H$ in $G(H,n,p_2^+)$ is bounded by
	\begin{equation*}
	\sum_{v,u\in V} \Pr{S(u,\{v\})} \bigg(\binom{n-2}{r-2}\frac{r!}{\aut(H)} 
	p_2^+ \bigg)^2   \leq C_4  n^{-0.9}\log^2 n =o(1).\qedhere
	\end{equation*}
\end{proof}

\begin{proof}[Proof of \Cref{summary}]\ 
  \begin{enumerate}
  \item Recall that we can couple $G(H,n,p_0), \bar{G}(H,n,\tau_2)$ such that 
  $G(H,n,p_0)\subset \bar{G}(H,n,\tau_2)$ w.h.p.\ 
	and there are at least $n \ln \ln n/2r $ copies of $H$ in $\bar{G}(H,n,\tau_2)$ that are not present in $G(H,n,p_0)$.
	From  \cref{smallapart2} it follows that w.h.p.\@ each of those copies that spans a  vertex in $\Sml$ also spans a unique vertex in $V\setminus \Sml$. Hence $w\geq n\ln \ln n/2r-n$.
  \item Let $S\subset V$,  $|S|\leq n/30r$ and set $S_{s}=S\cap \Sml$, 
  $S_{L}=S\cap {\Lrg}$. \Cref{lem:expansion} implies that $|N(S_{L})|\geq 10|S_{L}|$.
	In the case $|S_L|\geq  |S_s|$ we have 
	\[|N(S)| \geq |N(S_L)\setminus S_s| \geq 10|S_L|-|N(S_L)\cap S_s| \geq 10|S_L|-|S_s| \geq 9|S_L| \geq 2|S|.\]
	Next assume $|S_L| < |S_s|$.  
	\Cref{smallapart} implies  that no two vertices in $\Sml$ are within 
	distance 2 in $G(H,n,p_2^+)$, hence their neighborhoods are disjoint. Also $F_0$ has minimum degree 2. Therefore $|N(S_s)|\geq 2|S_s|.$
	Now let $S_L=S_1\cup S_2$ where $S_2$ consists of all the vertices in $S_L$ that are within distance $2$ from $S_s$ and $S_1=S_L\setminus S_2$. 
	If $|S_1|\geq |S_2|$ then since $S_s$ and $S_1$ have disjoint neighborhoods we have that 
	\[|N(S)| \geq |N(S_s)\setminus S_2|+|N(S_1)\setminus S_2| \geq 2|S_s| + 10|S_1|- 2|S_2|  \geq 2|S|.\]
	Otherwise $|S_s| > |S_L|$ and $|S_2| > |S_1|$.
	For $v\in S_s$ let $N_{S_2}(v)$ be the set of vertices in $S_2$ that are within 
	distance $2$ from $v$, hence $\cup_{v\in S_s}N_{S_2}(v)=|S_2|$.  \Cref{smallapart} states that no two vertices in $\Sml$ are within distance 6, thus for $v,u \in S_s, v\neq u$ the sets 
	$N(N_{S_2}(v)), N(N_{S_2}(u))$ are disjoint. In addition since $N_{S_2}\subset S_L$ and $|S_L| \leq |S| \leq n/30r$, \cref{lem:expansion}
	 implies that $|N(N_{S_2}(v))| \geq 10 |N_{S_2}(v)|$ for all $v\in S_s$. Thus 
	\begin{align*}
	|N(S)| &\geq \sum_{v\in S_s} |N(N_{S_2}(v)\cup \{v\})|\\
	&\geq \sum_{v\in S_s} [10|N_{S_2}(v)| - |\{v\}|]\cdot 
	\mathbb{I}_{N_{S_2}(v)\neq \emptyset}
	+|N(v)|\mathbb{I}_{N_{S_2}(v) = \emptyset} 
	\\&\geq \sum_{v\in S_s} 2=2|S_s| \geq |S|.  
	\end{align*}
  \item Assume that there exists a set $S \subset V$ such that $S$ is a 
  connected component of $F_0$ and let $s=|S|$. 
	$F_0$ has minimum degree 2 therefore $s\geq 3$. Let $S_L=S\cap \Lrg$ and $S_s= S\cap \Sml$. \Cref{smallapart2} implies that every vertex in $S_L$ can be adjacent to at most 1 vertex in $\Sml$ hence $|S_L|\geq |S_s|$. Thereafter \cref{lem:expansion} implies that $|S|> n/30r$ since otherwise 
	\[|N(S)|\geq |N(S_L)|-|S_s|\geq 10|S_L|-|S_L|>0.\]
	Finally the probability that there exists a connected component  of size $n/30r\leq s \leq n/2$  in $G(H,n,p_0)\subset F_0$ is bounded by
	\begin{align*}
	\sum_{s=n/30r}^{0.5n}\binom{n}{s}(1-p_0)^{\frac{r!}{\aut(H)}\cdot 
	s\binom{n-s}{r-1}}
	\leq \sum_{s=n/30r}^{0.5n} \bigg[\frac{en}{s}\cdot e^{-C_5 \ln n }\bigg]^s=o(1).
	\end{align*}
	\\iv) First we show that w.h.p.\@ $|\Sml|\leq n^{0.1}$. Indeed by Markov's inequality, 
	\[\Pr{|\Sml|>n^{0.1}}\leq n^{-0.1} \cdot n 
	\Pr{\Bin\bigg(\frac{r!}{\aut(H)}\binom{n-1}{r-1},p_0\bigg)\leq \ln \ln n } 
	=o(1).\]
	Now let  $Q_j$ be a set of 
	$\epsilon n^2$ edges not present in $F_j$ and $Q_j'$ be the subset of $Q_j$ consisting of the edges that are not incident to $\Sml$. Then  
	w.h.p.\@ $|Q_j'| =(1+o(1)) \epsilon n^2$.
	Every  edge in $Q_j'$  belongs to $C_6n^{r-2}$ copies of $H$ that are no present in $F_j$ and every copy of $H$ may cover at most $\binom{r}{2}$ edges in $Q_j'$.
	Therefore there exists a set $W_i$ consisting of at least $C_6n^{r-2} \cdot (1+o(1)) \epsilon n^2 / \binom{r}{2}$ distinct copies of $H$ that intersect $Q_j'$. $H_{i_{j+1}}$ is uniformly distributed among the copies of $H$ that are not present in $ F_j $ and are not incident to a vertex in $\Sml$. Thus
	\begin{equation*}
	\Pr{iv}=\Pr{H_i\in W_i}\geq \frac{C_6n^{r-2} \cdot (1+o(1))\epsilon n^2 / 
		\binom{r}{2}}{ n^r}\geq C_7\epsilon =C_\epsilon .    \qedhere
	\end{equation*}
\end{enumerate}
\end{proof}

\section{Proof sketch of  \Cref{thm:match:th,thm:match:hit}}

To prove Theorem \ref{thm:match:hit} we first indicate the edge set $Q_1$, consisting of   the edges that are incident to vertices of degree 1. Then we delete these edges and the vertex set $U_1$ consisting of the vertices incident to them. Thereafter we use exactly the same techniques as above in order to find a Hamilton cycle in the remaining graph. We use half of the edges of that cycle and the edges in $Q_1$ to form a perfect matching.

Given the above, the only substantial difference  is that while generating $ \bar{G}(H,n,\tau_1)$ (in place of $ \bar{G}(H,n,\tau_2)$) we stop at time $t^*=\min\{i:\delta(G_i')=1\}$. The proofs of all Lemmas with exception the proof of Lemma \ref{summary}, follow in exactly the same way. For the proof of Lemma \ref{summary} 
we have to be slightly more cautious as we want to prove the corresponding statements for the subgraph that is spanned by $V\setminus U_1$. Thus we have to use $\Sml\setminus U_1$ and $\Lrg \setminus U_1$ in place of $\Sml$ and $\Lrg$ respectively.

\section{Proof of \Cref{gamma for path}}

Before proving \Cref{gamma for path}, we derive an expression for $a'/b'$ and establish the following upper bound on $\gamma(a,b,i)$.

\begin{lemma} Consider an $(a,b,i)$ covering of $S$ by a path of length $v-1$ and an $(a',b')$ subcovering with $c'$ connected components. Let $S_j$ be the subgraph of $S$ covered by $j^{th}$ connected component of the $(a',b')$ subcovering. Let $f_j=|E(S_j)|-|V(S_j)|+1$ and $f'=\sum_{j=1}^{c'} f_j$. Let $k$ be the number of duplicate edges in the $(a',b')$ subcovering, i.e. $k$ is the smallest integer such that  removing $k$ edges from multigraph union of $b'$ copies of $H$ can yield a simple graph. Then
	\begin{equation}\label{a'b'}
	\frac{a'}{b'}=v-1+ \frac{c'-f'-k}{b'}
	\end{equation}
	and
	\begin{equation}\label{gamma bd}
	\gamma(a,b,i) \leq 
	\begin{cases}
	v-1 + \frac{1-f}{b} & (a,b,i) \text{ is edge-disjoint}\\
	v-1 - \frac{f}{b} & (a,b,i) \text{ is not edge-disjoint}
	\end{cases}
	\end{equation}
\end{lemma}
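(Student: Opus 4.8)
The plan is to prove \eqref{a'b'} by a direct count of vertices, edges, and components inside the subcovering, and then to obtain \eqref{gamma bd} by feeding \eqref{a'b'} the particular subcovering consisting of \emph{all} $b$ copies of the covering.

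For \eqref{a'b'}: let $F=\bigcup_{\ell=1}^{b'}F_\ell$ be the simple union of the copies in the subcovering and let $S_1,\dots,S_{c'}$ be its connected components (so $f_j$ is the cyclomatic number $|E(S_j)|-|V(S_j)|+1$ of $S_j$). Each $F_\ell$ is a path on $v$ vertices, hence contributes $v-1$ edges, so the multiset union of the edge sets $E(F_\ell)$ has exactly $b'(v-1)$ elements, of which, by the definition of $k$, exactly $k$ are repetitions; therefore $|E(F)|=b'(v-1)-k$. Since the components partition both $V(F)$ and $E(F)$, summing the definition of $f_j$ over $j$ yields $f'=|E(F)|-|V(F)|+c'=\bigl(b'(v-1)-k\bigr)-a'+c'$, i.e.\ $a'=b'(v-1)+c'-f'-k$; dividing by $b'$ is exactly \eqref{a'b'}. (The only place path-ness is used is the edge count $|E(F_\ell)|=v-1$.)

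For \eqref{gamma bd}: the whole collection $\{H_1,\dots,H_b\}$ is an $(a,b)$ subset of the $(a,b,i)$ covering, so $(a,b)\in\cD(a,b,i)$ and hence $\gamma(a,b,i)\le a/b$; I would then apply \eqref{a'b'} to this subcovering. Two observations pin down its parameters. First, $c'=1$: by part (iii) of \Cref{gamma def}, each $H_\ell$ must contain an edge of $S$ (otherwise $S\subseteq\bigcup_{j}H_j\setminus H_\ell$), hence it shares at least two vertices with the connected graph $S$, so $S\cup H_\ell$ is connected for every $\ell$, and therefore so is $\bigcup_\ell H_\ell$. Second, $f'\ge f:=\exc(S)$: since $S\subseteq F:=\bigcup_\ell H_\ell$ with both graphs connected, one can grow $F$ out of $S$ by adding the edges of $E(F)\setminus E(S)$ one at a time so as to keep the graph connected, and each such addition either attaches a new vertex (leaving the cyclomatic number unchanged) or joins two already-present vertices (raising it by $1$), so $f'=b_1(F)\ge b_1(S)=f$. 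Finally, $k=0$ exactly when the copies are pairwise edge-disjoint and $k\ge 1$ otherwise. Combining these, $\gamma(a,b,i)\le a/b=v-1+\tfrac{1-f'-k}{b}\le v-1+\tfrac{1-f-k}{b}$, which is $v-1+\tfrac{1-f}{b}$ when the covering is edge-disjoint ($k=0$) and at most $v-1-\tfrac{f}{b}$ when it is not ($k\ge1$).

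I expect the only real friction to be bookkeeping: keeping straight that \eqref{a'b'} concerns the connected components of the \emph{subcovering graph} (whose components may carry edges lying outside $S$), while the $f$ in \eqref{gamma bd} is the excess of $S$ itself, and then correctly inferring $c'=1$ and $f'\ge f$ for the full covering. The one substantive—though elementary—ingredient is the monotonicity $b_1(F)\ge b_1(S)$ of the cyclomatic number under adding edges to a connected graph; everything else follows directly from the definitions.
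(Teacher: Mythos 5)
Your proposal is correct, and its overall strategy matches the paper's: first establish the identity for $a'/b'$ by a counting argument, then specialize to the $(a',b')=(a,b)$ subcover to get the bound on $\gamma(a,b,i)$. The differences are worth noting. For the identity, the paper counts vertices directly ($b'v$ minus corrections for double counting within components and for the $k$ duplicated edges), while you count edges of the simple union ($b'(v-1)-k$) and sum the component-wise cyclomatic identity $f'=|E(F)|-|V(F)|+c'$; these are equivalent, but your route is cleaner and makes explicit that the $f_j$ must be the excesses of the connected components of the union $F=\bigcup_\ell F_\ell$ (which may contain edges outside $S$), not of the covered subgraphs of $S$ — the identity is false under the latter literal reading (e.g.\ two length-$2$ paths $x\,1\,2$ and $2\,3\,x$ covering the path $S=1\,2\,3$ close a $4$-cycle, so $a'/b'=2$ while the literal reading would give $2.5$). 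For the bound, the paper simply asserts that the full cover has $c'=1$, $f'=f$ and $k=0$ or $k\ge 1$; under the reading that makes the identity valid one only has $f'\ge f$ in general (again the $4$-cycle example: $f'=1$, $f=0$). You supply exactly what is needed: $c'=1$ from condition (iii) of \cref{gamma def} (each copy carries an edge of the connected graph $S$) and $f'\ge f$ from monotonicity of the cyclomatic number for connected $S\subseteq F$, and since the inequality points the right way it still yields
\[
\gamma(a,b,i)\le \frac{a}{b}=v-1+\frac{1-f'-k}{b}\le v-1+\frac{1-f-k}{b},
\]
which gives both cases of the bound. So your argument is not just correct but slightly more careful than the paper's own proof at this point; the only implicit hypothesis you (like the paper) use is that $S$ is connected, which is the setting in which the lemma is applied.
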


\begin{proof}
	We compute $a'$. Note that each of the $b'$ copies of $H$ contributes $v$ vertices, however vertices may be counted multiple times.  We compute
	\[a'= b'v- \brac{b'-\sum_{j=1}^{c'} 1- f_j} -k= b'(v-1) +c'-f'-k,\]
	where the first term subtracted corresponds to doubling counting vertices in each connected component and subtracting $k$ corresponds to removing double counting for vertices adjacent to edges of $S$ that are covered multiple times. 
	
	By definition, $\gamma(a,b,i)\leq a/b$. For the $(a',b')=(a,b)$ subcover that is the entire $(a,b,i)$ cover, $c'=1$, $f'=f$ and $k=0$ if $(a,b,i)$ is edge-disjoint and $k \geq 1$ if $(a,b,i)$ is not edge-disjoint. Thus, \Cref{gamma bd} follows directly from \Cref{a'b'}.
\end{proof}

\begin{proof}
	(of \Cref{gamma for path}.) We consider each case separately. 
	
	\textbf{Case: $f=0$.}
	Consider an $(a,b,i)$ covering. If $(a,b,i)$ is edge-disjoint, then $b \geq \gamma$. It follows from \Cref{gamma bd} that 
	\[\gamma(a,b,i) \leq 
	\begin{cases}
	v-1 + \frac{1}{\beta} & (a,b,i)\text{ is edge-disjoint}\\
	v-1 & (a,b,i)\text{ is not edge-disjoint}.
	\end{cases}
	\]
	Thus $\bar{\gamma}= \max_{(a,b,i) \in \mathcal{I}(S,H)} \gamma(a,b,i) \leq v-1 + 1/\beta$. 
	
	Next consider an edge-disjoint cover of $S$ by $\beta$ copies of $H$, $(a, \beta, i)$. By \Cref{a'b'}, for any $(a',b')$ subcover of the $(a, \beta, i)$ cover, 
	\[\frac{a'}{b'}= v-1 + \frac{c'}{b'}.\] This value is minimized with $c'=1$ and $b'=\beta$, which is achieved by the $(a, \beta)$ subcover which is the whole cover. Thus $\gamma(a, \beta, i)= v-1 + 1/\beta$, and so $\bar{\gamma}\geq  v-1 + 1/\beta$.

	\textbf{Case: $f=1$}. By \Cref{gamma bd}, $\gamma(a,b,i) \leq v-1$ for all $(a,b,i)$ and so it follows that $\bar{\gamma} \leq v-1$. 
	
	Next consider an edge-disjoint cover of $S$, $(a, b, i)$. By \Cref{a'b'}, for any $(a',b')$ subcover of the $(a, \beta, i)$ cover, 
	\[\frac{a'}{b'}= v-1 + \frac{c'-1}{b'}.\] This value is minimized with $c'=1$, which is achieved by the $(a, b)$ subcover which is the whole cover. Thus $\gamma(a, b, i)= v-1 $, and so $\bar{\gamma}\geq  v-1 $.
	
	\textbf{Case: $f\geq 2$}. Consider an $(a,b,i)$ cover. By \Cref{a'b'}, \[\gamma(a,b,i)= \min_{(a',b') \in \mathcal{D}(a,b,i)} \frac{a'}{b'}= \min_{a',b',c',k} v-1 + \frac{c'-f'-k'}{b'}.\]
	Let $t'$ and $e'$ be the number of edges and vertices of $S$ covered by the subcover, so $e'=t'-c'+f'+k$. It follows 
	\begin{equation}\label{three}
	\gamma(a,b,i)= \min_{t',e',b'} v-1 +\frac{t'-e'}{b'}.
	\end{equation}
	
	To give an upper bound on $\gamma(a,b,i)$, we construct a subcover of the $(a,b,i)$ cover as follows. Let $X$ be a subgraph of $S$ with $t^*$ vertices and $e^*$ edges such that $t^*/e^*= \eta$. Let $t',e',b'$ correspond to the subcover that minimally covers $X$, and let $C$ be the subgraph of $S$ covered by this subcover (so $X$ is a subgraph of $C$). 
	
	We claim that $t'-e' \leq t^*-e^*$. Note that $t'-t^*= |V(C) \setminus V(X)|$ and $e'-e^*=|E(C)\setminus E(X)|$. In each component of $C \setminus E(X)$, at least one vertex is included in $V(X)$. Since the number of vertices in a connected component is at least the number of edges in the connected component minus one, and at least one vertex in each connected component is not included in $V(C) \setminus V(X)$, it follows that $|V(C) \setminus V(X)| \geq |E(C) \setminus E(X)|$. Thus $t'-t^* \leq e'- e^*$ and the claim follows.
	
	By considering this subcover with parameters $t',e',b'$, we obtain 
	\[\gamma(a,b,i) \leq v-1 + \frac{t'-e'}{b'} \leq v-1 +\frac{t^* - e^*}{e^*} = v-2 + \eta\] since $b' \leq e^*$ and $t^*-e^* \leq 0$. It follows that $\bar{\gamma} \leq v-2 + \eta$.
	
	Finally to lower bound $\bar{\gamma}$ we consider a cover in which there are $b=|E(S)|$ copies of $H$ and each copy covers precisely one edge of $S$. In this case in all subcovers $b'=e'$. By \Cref{three}
	\[\gamma(a,b,i) = \min_{t',e',b'} v-1 + \frac{t'-e'}{b'}= \min_{t',e'} v-2 + \frac{t'}{e'}= v-2 - \eta.\] Thus $\bar{\gamma} \geq v-2 + \eta$.
\end{proof}

\end{document}